\numberwithin{equation}{section}
\declaretheorem[name=Theorem, numberwithin=section]{thm}
\newtheorem{lem}[thm]{Lemma}
\newtheorem{prop}[thm]{Proposition}
\newtheorem{cor}[thm]{Corollary}
\newtheorem{defn}[thm]{Definition}
\declaretheoremstyle[bodyfont=\normalfont]{remark-style}
\declaretheorem[name={Remark}, style=remark-style, sibling=thm]{rem}
\declaretheorem[name={Example}, style=remark-style, sibling=thm]{exmp}
\newcommand{\R}{\mathbb{R}}
\newcommand{\Rd}{{\R^{d}}}
\newcommand{\N}{\mathbb{N}}
\newcommand{\dsubset}{\subset \subset}
\newcommand{\E}{\mathbb{E}}
\renewcommand{\P}{\mathbb{P}}
\renewcommand{\H}{\mathbb{H}}
\renewcommand{\AA}{\mathcal{A}}
\newcommand{\DD}{\mathcal{D}}
\newcommand{\LL}{\mathscr{L}}
\newcommand{\Lspace}{\mathcal{L}^1}
\newcommand{\KK}{\mathcal{K}}
\newcommand{\UU}{\mathcal{U}}
\renewcommand{\H}{\mathbb{H}} 
\newcommand{\ind}{{\bf 1}}
\renewcommand{\leq}{\leqslant} 
\renewcommand{\geq}{\geqslant}
\newcommand{\norm}[1]{\|#1\|}
\newcommand{\Ll}{L_{\operatorname{loc}}}
\newcommand{\Cl}{C_{\operatorname{loc}}}
\newcommand{\seto}{\{0\}}
 \DeclareMathOperator{\dist}{dist}
\DeclareMathOperator{\supp}{supp}
\DeclareMathOperator{\diam}{diam}
\def\({\left(} 
\def\){\right)} 
\def\[{\left[}
\def\]{\right]} 
\def\<{\langle} 
\def\>{\rangle}
\def\lv{\left\lvert}
\def\rv{\right\rvert}
\newcommand{\tauD}{\tau_D}
\definecolor{ll}{rgb}{0.7,0.1,0.2}
\renewcommand{\d}{\, \textnormal{d}}
\begin{document}

\title{Remarks on the nonlocal Dirichlet problem\thanks{Research supported in 
part by National Science Centre (Poland) grant 2014/14/M/ST1/00600 and by the 
DFG through the CRCs 701 and 1283}} 
\author{Grzywny, Tomasz \footnote{Wrocław University of Science and 
		Technology, Faculty of Pure and Applied 
		Mathematics, 27 Wybrzeże 
		Wyspiańskiego
		50-370 Wrocław, Poland, \emph{email:}
		tomasz.grzywny@pwr.edu.pl}
	\and 
	Kassmann, Moritz \footnote{Universit\"{a}t Bielefeld, Fakult\"{a}t f\"{u}r 
		Mathematik, Postfach 100131, D-33501 Bielefeld, Germany, \emph{email:} 
		moritz.kassmann@uni-bielefeld.de}
	\and 
	Le\.{z}aj, \L{}ukasz \footnote{Wrocław University of Science and 
		Technology, Faculty of Pure and Applied 
		Mathematics, 27 Wybrzeże 
		Wyspiańskiego
		50-370 Wrocław, Poland, 
		\emph{email:} lukasz.lezaj@pwr.edu.pl} 
}

\maketitle

\begin{abstract}
We study translation-invariant integrodifferential operators that generate 
L\'{e}vy processes. First, we investigate different notions of what a solution 
to a nonlocal Dirichlet problem is and we provide the classical 
representation formula for distributional solutions. Second, we study the 
question under which assumptions distributional solutions are twice 
differentiable in the classical sense. Sufficient conditions and 
counterexamples are provided.  
\end{abstract}

Keywords: Dirichlet problem, nonlocal operator, L\'evy process, regularity 

MSC2010 Subject Classification: 34B05, 47G20, 60J45

\section{Introduction}

The aim of this article is to provide two results on translation-invariant 
integrodifferential operators, which are not surprising but have not been 
systematically covered in the literature. Let us briefly explain these results 
in case of the classical Laplace operator.

\medskip

The classical result of Weyl says the following. Assume $D 
\subset \R^d$ is an open set, $f \in C^\infty(D)$, and $u \in \mathcal{D}'(D)$ 
is a Schwartz distribution satisfying $\Delta u = f$ in the distributional 
sense, i.e. $\langle u, \Delta \psi \rangle = \langle \psi, f 
\rangle$ for every $\psi \in C^\infty_c(D)$. Then $u \in C^\infty (D)$ and 
$\Delta u = f$  in $D$. This is the starting point for 
the study of distributional solutions to boundary value problems. Our first aim 
is to study distributional solutions to nonlocal boundary value problems of the 
form
\begin{alignat*}{2}
\LL u &= f \quad &&\text{in } D\,, \\
u &= g &&\text{in } D^c\,,
\end{alignat*} 
where $\LL$ is an integrodifferential operator generating a unimodal L\'evy 
process. Our second aim is to provide sufficient conditions such that 
distributional solutions $u$ to the nonlocal Dirichlet problem are twice 
differentiable in the classical sense. In case of the Laplace operator, it is 
well known that Dini continuity of $f: D \to \R$, i.e. finiteness of the 
integral $\int_0^1 \omega_f(r)/r \d r$ for the modulus of continuity 
$\omega_f$, implies that the distributional solution $u$ to the classical 
Dirichlet problem satisfies $u \in C^2_{\operatorname{loc}}(D)$. On 
the other hand, one can construct a continuous function $f:B_1 \to \R$ and 
a distribution $u \in \mathcal{D}'(B_1)$ such that $\Delta u = f$ in the 
distributional sense, but $u \notin C^2_{\operatorname{loc}}(B_1)$. These 
observation have been made long time ago \cite{HaWi55}. They have been 
extended to non-translation-invariant operators by several 
authors \cite{MR521856, MaEi71} and to nonlinear problems \cite{Kov97, 
	DGM04}. Note that there are many more related contributions including 
treatments 
of partial differential equations on non-smooth domains. In the present work we 
treat the simple linear case for a general class of nonlocal operators 
generating unimodal L\'evy processes.

\medskip

Let us introduce the objects of our study and formulate our main 
results. Let $\nu\!: \R^d\setminus\{0\} \to [0,\infty)$ be a function satisfying
\begin{align*} 
\int \big( 1 \wedge |h|^2\big) \nu(h) \d h < \infty \,. 
\end{align*} 
The function $\nu$ induces a measure $\nu(\! \d h) = \nu(h) \d 
h$, which is called the L\'{e}vy measure. Note that we use the same symbol for the 
measure as well as for the density. We study operators of the form
\begin{align}\label{eq:def-L}
\LL u(x) = \lim_{\epsilon \to 0} \int_{|y|>\epsilon} (u(x+y)-u(x))\nu(y) \d 
y\,. 
\end{align}
This expression is well defined if $u$ is sufficiently regular in the 
neighbourhood of $x \in \R^d$ and satisfies some integrability condition at 
infinity. We recall that for $\alpha \in (0,2)$ and $\nu(\!\d 
h)= c_\alpha |h|^{-d-\alpha} \d h$ with some appropriate constant $c_\alpha$, 
the operator $\LL$ equals the fractional Laplace operator 
$-(-\Delta)^{\alpha/2}$ on $C_b^2(\R^d)$. The regularity theory of such 
operators has been intensively studied recently. For instance, it is well known 
\cite{MR2555009, MR3168912, MR3293447, MR3536990, MR3447732} that the solution 
of 
$-(-\Delta)^{\alpha/2} 
u=f$ with $f \in C^{\beta}$ belongs to $C^{\alpha+\beta}$ provided that 
neither  $\beta$ nor $\alpha+\beta$ is an integer. The same result in more 
general setting is derived in \cite{BK2015}.

\medskip

Our standing assumption is that $h \to \nu(h)$ is a 
non-increasing radial function and that there exists a L\'{e}vy measure 
$\nu^*$ resp. a density $\nu^*$ such that $\nu \leq \nu^*$ and 
\begin{align}\label{growth_condition}
\nu^*(r) \leq C\nu^*(r+1), \quad r\geq r_0
\end{align}
for some $r_0, C \geq 1$. Given an open set $D \subset \R^d$, 
denote by $\Lspace(D)$ the vector space of all Borel functions $u \in \Ll^1$ 
satisfying
\begin{align}\label{measure_scaling}
\int_D |u(x)| (1 \wedge \nu^*(x)) \d x < \infty.
\end{align}
The condition $u \in 
\Lspace(D)$ is the integrability condition needed to ensure well-posedness in 
the definition of $\LL u$ in distributional sense. Given an open set, we 
denote by $G_D$ resp. $P_D$ the usual Green resp. the Poisson operator, cf. 
\autoref{sec:prelims}. For a definition of the Kato class $\KK$ and $\KK(D)$ see 
\autoref{def:Kato_class} below. Here is our first result.

\begin{thm}\label{thm:weak_thm}
	Let $D$ be a bounded open set. Suppose $f \in L^1(D)$ and $g \in \Lspace(D^c)$. 
	Let $u \in \Lspace(\Rd) $ be a distributional solution of the Dirichlet 
problem
	
	\begin{align}\label{eq:weak_problem}
	\begin{array}{rlll}
	\LL u &=& f & \text{in } D\,, \\
	u &=& g & \text{in } D^c\,. 
	\end{array} 
	\end{align} 
	
	Then $u(x) + G_D[f](x)$ satisfies the mean-value property inside 
	$D$. Furthermore, if $D$ is a Lipschitz domain and there exists $V 
\dsubset D$ such that $f$ and $g \ast \nu$ belongs to the Kato class $\KK(D 
\setminus \overline{V})$, then there is a unique solution which is bounded 
close to the boundary of $D$
	\begin{align*}
	u(x) =  - G_D[f](x)+P_D[g](x).
	\end{align*}
\end{thm}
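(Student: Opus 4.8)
The plan is to treat the two assertions in turn: the first by combining the distributional identity for $u$ with a mollification of the associated $\LL$-harmonic function, the second by exhibiting the candidate $-G_D[f]+P_D[g]$ and invoking a maximum principle. Since $f\in L^1(D)$ and $D$ is bounded, by the calculus of the Green operator recorded in \autoref{sec:prelims} one has $G_D[f]\in\Lspace(\Rd)$, $G_D[f]=0$ on $D^c$, and $G_D[f]$ is a distributional solution of $\LL v=-f$ in $D$. Hence $w:=u+G_D[f]\in\Lspace(\Rd)$ coincides with $g$ on $D^c$ and satisfies $\int_{\Rd}w\,\LL\psi\d x=0$ for all $\psi\in C_c^\infty(D)$; that is, $w$ is distributionally $\LL$-harmonic in $D$. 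Fix a standard mollifier $\rho_\epsilon$. As $\LL$ is translation invariant and $\nu$ is radial (hence symmetric), for $x$ with $\dist(x,D^c)>\epsilon$ the function $\rho_\epsilon(x-\cdot)$ lies in $C_c^\infty(D)$ and
\[
\LL(w\ast\rho_\epsilon)(x)=\int_{\Rd}w(y)\,\LL[\rho_\epsilon(x-\cdot)](y)\d y=0,
\]
the exchange of $\LL$ with the convolution being legitimate because $|\LL[\rho_\epsilon(x-\cdot)](y)|\le C\nu^*(y)$ for large $|y|$ while $w\in\Lspace(\Rd)$. Thus $w_\epsilon:=w\ast\rho_\epsilon\in C^\infty$ is classically $\LL$-harmonic on $D^{(\epsilon)}:=\{x\in D:\dist(x,D^c)>\epsilon\}$, and therefore, by Dynkin's formula, satisfies $w_\epsilon(x)=\E_x[w_\epsilon(X_{\tau_B})]$ for every ball $B$ with $\overline B\subset D^{(\epsilon)}$ and $x\in B$ (the expectation being finite because $w\in\Lspace(\Rd)$ and, by the Ikeda--Watanabe formula together with \eqref{growth_condition}, the density of $X_{\tau_B}$ is bounded by $C(1\wedge\nu^*)$ away from $B$). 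Letting $\epsilon\to0$, \eqref{growth_condition} yields $w_\epsilon\to w$ in $\Ll^1$ and in the weighted space $\Lspace(\Rd)$, so that after a routine, though not entirely trivial, approximation argument controlling the tails one passes to the limit and obtains $w(x)=\E_x[w(X_{\tau_B})]$ for a.e.\ $x\in B$ and every ball $\overline B\subset D$; that is, $u+G_D[f]$ has the mean value property inside $D$.

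For the second assertion, assume $D$ Lipschitz and the stated Kato-class hypotheses, and set $v:=-G_D[f]+P_D[g]$. By \autoref{sec:prelims}, $\LL(-G_D[f])=f$ and $\LL P_D[g]=0$ in $D$ in the distributional sense, while $G_D[f]=0$ and $P_D[g]=g$ on $D^c$; hence $v$ solves \eqref{eq:weak_problem}. To see that $v$ is bounded near $\partial D$, split $G_D[f](x)=\int_{\overline V}G_D(x,y)f(y)\d y+\int_{D\setminus\overline V}G_D(x,y)f(y)\d y$: for $x$ near $\partial D$ the first term is bounded because $G_D$ is bounded off the diagonal on (a neighbourhood of $\partial D$)$\times\overline V$, and the second is bounded by the Kato-class estimate for the Green operator (\autoref{def:Kato_class}) since $f\in\KK(D\setminus\overline V)$; moreover, by the Ikeda--Watanabe identity and Fubini, $P_D[g](x)=\int_D G_D(x,y)(g\ast\nu)(y)\d y=G_D[g\ast\nu](x)$, so the same splitting applied to $g\ast\nu\in\KK(D\setminus\overline V)$ shows that $P_D[g]$, hence $v$, is bounded near $\partial D$. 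For uniqueness, let $u_1,u_2$ be two solutions of \eqref{eq:weak_problem} bounded near $\partial D$ and put $h:=u_1-u_2$; then $\LL h=0$ in $D$ distributionally and $h=0$ on $D^c$, so by the first part $h$ has the mean value property inside $D$. Since $V\dsubset D$, $h$ is bounded on $(D\setminus\overline V)\cup D^c$, and, being a mean-value function, it has a continuous version, hence is bounded on the compact set $\overline V$ as well; thus $h$ is bounded on $\Rd$. Exhausting $D$ by relatively compact open sets $D_n\uparrow D$, the mean value property gives $h(x)=\E_x[h(X_{\tau_{D_n}})]$; since $D$ is bounded and Lipschitz, under the standing assumptions the process leaves $D$ by a jump in finite time with $\P_x(X_{\tauD}\in\partial D)=0$, so by quasi-left-continuity $X_{\tau_{D_n}}=X_{\tauD}$ for $n$ large $\P_x$-a.s., and dominated convergence (using $\|h\|_\infty<\infty$) yields $h(x)=\E_x[h(X_{\tauD})]=0$ because $h$ vanishes on $D^c$. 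Hence $h\equiv0$, so the unique solution bounded near $\partial D$ is $v=-G_D[f]+P_D[g]$.

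The main obstacles I expect are the two limiting passages — in the mean value property as $\epsilon\to0$, and along the exhaustion $D_n\uparrow D$ in the uniqueness step — for which one needs the tail control furnished by $u\in\Lspace(\Rd)$ and the growth condition \eqref{growth_condition} in order to dominate the relevant exit densities; and the step from the Kato condition on $g\ast\nu$ to the boundedness of $P_D[g]$ near $\partial D$, which rests on the Ikeda--Watanabe representation together with fine bounds for $G_D$ near the diagonal and near the Lipschitz boundary.
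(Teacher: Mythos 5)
Your proposal is correct and follows essentially the same route as the paper: write $h=u+G_D[f]$, use that $-G_D[f]$ is a distributional solution of the zero-exterior-data problem to conclude $\LL h=0$ distributionally, mollify and pass to the limit to get the mean-value property, and then combine boundedness near $\partial D$ (via \autoref{lem:Kato_green_op_bounded}) with an exhaustion of $D$ and the fact that the harmonic measure of a Lipschitz domain does not charge $\partial D$. Two small comparative remarks. First, the step you label ``a routine, though not entirely trivial, approximation argument controlling the tails'' is where most of the paper's work sits: the passage from $\LL w=0$ in $\mathcal{D}'(D)$ to the mean-value property occupies Lemmas~\ref{lem:anihilation_to_prob_harmonic_C2} and~\ref{lem:anihilation_to_prob_harmonic}, where the smooth case is handled not by Dynkin's formula directly but by a maximum-principle comparison of $u$ with $P_{D_1}[u]$, and the limit $\epsilon\to0$ is justified by a uniform-integrability/Vitali argument with respect to $P_\Omega(x,z)\d z$ using \autoref{lem:harmonic_radial_kernel} and \eqref{growth_condition}; your tools are the right ones, but this is the substantive part of the proof rather than a routine one. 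Second, your uniqueness step is organized a little differently: you take the difference $h=u_1-u_2$, which vanishes on $D^c$, and let $D_n\uparrow D$, whereas the paper works directly with $h=u+G_D[f]$ and shows $P_{U_n}[h]\to P_D[g]$ by splitting off $P_{U_n}[h;\overline{D}\setminus U_n]\le C\,P_{U_n}(x,\overline{D}\setminus U_n)\to P_D(x,\partial D)=0$. Both hinge on exactly the same ingredients (boundedness of $h$ near $\partial D$ from the Kato hypotheses, and the vanishing of the harmonic measure on $\partial D$ for Lipschitz $D$), so the difference is cosmetic; your version has the minor advantage that the interior limit is trivially zero, at the cost of needing the continuity (hence local boundedness on $\overline V$) of the mean-value function, which is available from \autoref{lem:harmonic_radial_kernel}.
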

The theorem above says that the 
distributional solution of \eqref{eq:weak_problem} is unique up to a harmonic 
function. If, additionally,  $D$ is a Lipschitz domain and we impose some 
regularity, then the solution is unique. Boundedness of $u$, $f$, $g$ would 
suffice, of course. It is obvious that one has to impose some regularity 
condition on $f$ in order 
to prove uniqueness of solutions. Note that, in the case where $\LL$ equals the 
fractional Laplace operator, similar results like \autoref{thm:weak_thm} are 
proved in \cite{MR1671973}. A result similar to \autoref{thm:weak_thm} has 
recently been proved in \cite{KKLL2018}. The authors consider a smaller class 
of operators and concentrate on viscosity solutions instead of distributional 
solutions. 

\medskip

Variational solutions to nonlocal operators have been studied by several 
authors, e.g., in \cite{MR3318251, MR3738190}. The problem to determine 
appropriate function spaces for the data $g$ leads to the notion of nonlocal 
traces spaces introduced in \cite{DyKa16}. It is interesting that the study of 
Dirichlet problems for nonlocal operators leads to new questions regarding the 
theory of function spaces. 

\medskip

The formulation of our second main result requires some further preparation. 
They are rather technical because we cover a large class of 
translation-invariant operators. The similar condition to the following appears in  
\cite{BGPR2017}. 
\begin{enumerate}
	\item[(A)] $\nu$ is twice continuously differentiable and 
	there is a positive constant $C$ such that
	\begin{align*}
	|\nu'(r)|, |\nu''(r)| \leq C \nu^*(r) \quad \text{ for } r \geq r_0.
	\end{align*}
\end{enumerate}
(A) and \eqref{growth_condition} are essential for proving that functions with 
the mean-value property are twice continuously differentiable, see 
\autoref{lem:harm_c2}. We emphasize that in general this is not the case and 
usually harmonic functions lack sufficient regularity if no additional 
assumptions are imposed. The reader is referred to \cite[Example 
$7.5$]{MR3413864}, where a function $f$ with the mean-value property is 
constructed for which $f'(0)$ does 
not exist.   

\medskip

Let $G$ be a fundamental solution of $\LL$ on $\Rd$ (see \eqref{G_def} for 
definition). Note that in the case of the fractional Laplace operator $G(x) 
=c_{d,\alpha} |x|^{\alpha - d}$ for $d\neq \alpha$ and some constant 
$c_{d,\alpha}$. In what follows we will assume the kernel $G$ to satisfy the 
following growth condition:
\begin{enumerate}
	\item[(G)] $G \in C^2(\Rd \setminus \seto)$ and there exists a 
non-increasing function $S\!: \, (0,\infty) \mapsto [0,\infty)$ and $r_0>0$ 
such 
that
	\begin{enumerate}
		\item[(i)] if $\int_0^{1/2}|G'(t)|t^{d-1} \d t=\infty$, then
		\begin{align*}
		G(r),|G'(r)|,r|G''(r)| \leq S(r), \quad r<r_0,
		\end{align*}
		\item[(ii)] if $\int_0^{1/2}|G'(t)|t^{d-1} \d t<\infty$, then 
additionally $G \in C^3(\Rd \setminus \seto)$ and
		\begin{align*}
		G(r),|G'(r)|,|G''(r)|,r|G'''(r)| \leq S(r), \quad r<r_0.
		\end{align*}
	\end{enumerate}
\end{enumerate}

\begin{thm}\label{thm:main_thm}
	Let $D$ be an open bounded set. Assume that 
the measure $\nu$ satisfies (A) and \eqref{growth_condition} and the 
fundamental solution $G$ satisfies (G). Let $g \in \Lspace(D^c)$ and $f\!: \, D 
\mapsto \R$.  If $\int_0^1 |G'(t)|t^{d-1}\d t < \infty$ we assume 
\begin{align}\label{main_thm_cond1}
\int_0^{1/2} S(t) \omega_f(t,D) t^{d-1}\d t < 
\infty,
\end{align}  
or  if $\int_0^1 |G'(t)|t^{d-1}\d t = \infty$ we assume
\begin{align}\label{main_thm_cond2}
\int_0^{1/2} S(t) \omega_{\nabla f}(t,D) t^{d-1}\d t < \infty\,.
\end{align}

Then the solution $u \in \Lspace(\Rd)$ of the problem
	\begin{align}\label{General_problem3}
	\left\{ \begin{array}{rlll}
	\LL u &=& f & \text{in } D, \\
	u &=& g & \text{in } D^c. 
	\end{array} \right.
	\end{align}
	belongs to $\Cl^2(D)$ and is unique up to a harmonic function (with 
respect to 
	$\LL$).
\end{thm}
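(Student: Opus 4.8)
The plan is to reduce the theorem to the representation formula from \autoref{thm:weak_thm} together with the regularity lemma \autoref{lem:harm_c2}. By \autoref{thm:weak_thm}, any distributional solution $u \in \Lspace(\Rd)$ of \eqref{General_problem3} satisfies that $u + G_D[f]$ has the mean-value property in $D$; and, under the Lipschitz/Kato-class hypotheses (which we must check follow from (A), (G) and the Dini-type conditions \eqref{main_thm_cond1}--\eqref{main_thm_cond2}), the solution is in fact unique up to a harmonic function and is given by $u = -G_D[f] + P_D[g]$. Granting this, it suffices to prove two things: (I) the Poisson part $P_D[g]$, being harmonic in $D$, is $C^2$ there — this is exactly the content of \autoref{lem:harm_c2}, using assumptions (A) and \eqref{growth_condition}; and (II) the Newtonian-type potential $x \mapsto G_D[f](x)$ is $C^2_{\operatorname{loc}}(D)$ under the stated Dini hypothesis on $f$ (or $\nabla f$).

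For step (II), I would fix $V \dsubset D$ and write $G_D[f] = G \ast (f\ind_D) + h$, where $h$ is the correction term that is $\LL$-harmonic in $D$ (so $C^2$ by (I) again), reducing matters to the convolution $w(x) = \int_D G(x-y) f(y)\d y$. The point is a classical Dini-continuity argument: one differentiates under the integral sign, the admissibility of which is governed precisely by whether $\int_0^{1/2}|G'(t)| t^{d-1}\d t$ is finite. If this integral is finite (the milder singularity), then $\nabla w(x) = \int_D \nabla G(x-y) f(y)\d y$ is already well-defined, and to get the second derivatives one writes, for $i,j$,
\begin{align*}
\partial_i w(x+he_j) - \partial_i w(x) = \int_D \big(\partial_i G(x+he_j - y) - \partial_i G(x-y)\big) f(y) \d y,
\end{align*}
splits the domain into $|x-y| \le 2|h|$ and $|x-y| > 2|h|$, and on the far region uses the bound $r|G'''(r)| \le S(r)$ from (G)(ii) together with $|f(y) - f(x)| \le \omega_f(|x-y|,D)$ (the constant $f(x)$ integrates against a gradient and contributes a controlled boundary term); the resulting modulus of continuity of $\partial_i w$ is controlled by $\int_0^{1/2} S(t)\omega_f(t,D) t^{d-1}\d t$, which is finite by \eqref{main_thm_cond1}. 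If instead $\int_0^{1/2}|G'(t)|t^{d-1}\d t = \infty$, the kernel $G'$ is not locally integrable against a mere $L^\infty$ density, so one cannot differentiate $w$ directly; instead one integrates by parts once, moving a derivative onto $f$ to obtain $\partial_i w(x) = \int_D G(x-y)\,\partial_i f(y)\d y + (\text{boundary term})$, and then runs the same splitting argument on this new potential with kernel $G$ and density $\nabla f$, using $r|G''(r)| \le S(r)$ from (G)(i) and the modulus $\omega_{\nabla f}$; finiteness of the resulting expression is then \eqref{main_thm_cond2}. In both cases the second derivatives are continuous and one verifies a posteriori that $\LL w = f$ pointwise, so $\LL u = f$ holds classically.

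The main obstacle I anticipate is the bookkeeping at the interface between the local behaviour of $G$ near $0$ (controlled by $S$ and (G)) and its behaviour at infinity, together with the fact that $G_D$ is not simply convolution with $G$ but carries the harmonic corrector $P_D$ of its boundary values; one must make sure this corrector genuinely lands in the mean-value class so that \autoref{lem:harm_c2} applies, and that the Kato-class hypotheses of \autoref{thm:weak_thm} are met — for the latter, Dini continuity of $f$ (hence boundedness on compacts) plus (A) to control $g\ast\nu$ should suffice away from a neighbourhood $V$ of the singularity region. A secondary technical point is justifying the integration by parts and the vanishing/controlling of boundary terms when $\int_0^{1/2}|G'(t)|t^{d-1}\d t = \infty$, since then $G$ itself may be unbounded at $0$; here one localizes with a cutoff supported in $D$, absorbs the cutoff-commutator terms into the harmonic part, and only needs $G \in \Ll^1$ near $0$, which follows from the standing integrability of $\nu$.
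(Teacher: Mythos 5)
Your proposal is correct and follows essentially the same route as the paper: the Hunt-formula decomposition of $G_D[f]$ into the convolution $G\ast(f\ind_D)$ plus a harmonic corrector, \autoref{lem:harm_c2} for the mean-value parts, and the classical Dini-type near/far splitting (the paper packages this in \autoref{lem:lemma22modification} and \autoref{lem:lemma23modification}) with the same case distinction on $\int_0^{1/2}|G'(t)|t^{d-1}\d t$ and the same device of moving one derivative onto $f$ when $G'$ is non-integrable. The only caution is that since $D$ is not assumed Lipschitz you should rely on the first part of \autoref{thm:weak_thm} (that $u+G_D[f]$ has the mean-value property, hence is $C^2$ by \autoref{lem:harm_c2}) rather than on the full representation $u=-G_D[f]+P_D[g]$, which is exactly how the paper obtains uniqueness only up to a harmonic function.
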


\begin{rem}
	\eqref{main_thm_cond1} or \eqref{main_thm_cond2} imply $f \in \KK(D)$, 
so by \autoref{thm:weak_thm}, if $D$ is a Lipschitz domain and $g \ast v \in 
\KK(D)$ then the solution is unique. 
\end{rem}

The result uses quite involved conditions because the 
measure $\nu$ interacts with the Dini-type assumptions for the right-hand side 
function $f$. Looking at examples, we see that the two cases 
described in the theorem appear naturally. In the fractional Laplacian case 
($G(x) =c_{d,\alpha} |x|^{\alpha - d}$), finiteness of the expression 
$\int_0^{1/2} |G'(t)|t^{d-1}\d t$ depends on the value of $\alpha \in (0,2)$. 
We show in \autoref{sec:examples} that the conditions hold true 
when $\LL$ is the generator of a rotationally symmetric $\alpha$-stable 
process, i.e., when $\LL$ equals the fractional Laplace operator. Note that 
\autoref{thm:main_thm} is a new result even in this case. We also study the 
more general class, e.g. operators of the form $-\varphi(-\Delta)$, where 
$\varphi$ is a Bernstein function. Note that in the theorem above we do not 
assume that $g$ is bounded.

\medskip

\begin{rem}
We emphasize that in the case of $\LL$ being the fractional Laplace operator of 
order $\alpha \in (0,2)$ and $f \in C_{\operatorname{loc}}^{2-\alpha}(D)$, 
it is not true that every solution of $\LL u = f$ belongs to
$C_{\operatorname{loc}}^2(D)$ as is stated in \cite[Theorem $3.7$]{AJS2018}. A 
similar phenomenon has been mentioned in \cite{MR2555009} and is visible 
here as well. Observe that in such case the integrals \eqref{main_thm_cond1} 
and \eqref{main_thm_cond2} are clearly divergent and consequently, 
\autoref{thm:main_thm} cannot be applied. We devote 
\autoref{sec:counterexamples} to the construction of counterexamples for any 
$\alpha \in (0,2)$. 
\end{rem}

\medskip

The article is organized as follows: in \autoref{sec:prelims} we provide the 
main definitions and some preliminary results. The proof of 
\autoref{thm:weak_thm} is provided in \autoref{sec:weak-solutions}.  
\autoref{sec:sufficient_condition} contains several rather 
technical computations and the proof of \autoref{thm:main_thm}. We discuss the 
necessity of the assumptions of \autoref{thm:main_thm} through examples in 
\autoref{sec:counterexamples}. Finally, in \autoref{sec:examples} we 
provide examples that show that the assumptions of \autoref{thm:main_thm} are 
natural.
\section{Preliminaries}\label{sec:prelims}

In this section we explain our use of notation, define several objects and 
collect some basic facts. We write $f \asymp g$ when $f$ and $g$ are 
comparable, that is the quotient $f/g$ stays between two positive constants. To 
simplify the notation, for a radial function $f$ we use the same symbol to 
denote its radial profile. In the whole paper $c$ and $C$ denote constants 
which may vary from line to line. We write $c(a)$ when the constant $c$ depends 
only on $a$. By $B(x,r)$ we denote the ball of radius $r$ centered at $x$, that 
is $B(x,r)=\{y \in \Rd: \ |y-x|<r\}$. For convenience we set $B_r=B(0,r)$. For 
an open set $D$ and $x \in D$ we define $\delta_D(x)=\dist(x,\partial D)$ and 
$\diam(D)=\sup_{x,y \in D} 
|x-y|$. The modulus of continuity of a continuous function $f: D \to \R$ 
is defined by
\begin{align*}
\omega_{f}(t,D) = \sup \{|f(x) - f(y)| : \; x,y \in D, |x-y| < t\} \quad (t > 
0)\,.
\end{align*}

For a differentiable function $f: D \to \R$ we set
\begin{align*}
\omega_{\nabla f}(t,D) = \max\limits_{i \in \{1, \ldots, d\}} \sup 
\{|\partial_{x_i} f(x) - \partial_{x_i}  f(y)| : \; x,y \in D, |x-y| < t\} 
\quad (t > 
0)\,.
\end{align*}

\medskip

We say that a Borel measure is isotropic unimodal if it is absolutely 
continuous 
on $\Rd \setminus \seto$ with respect to the Lebesgue measure and has a radial, 
non-increasing density. Given an isotropic unimodal L\'{e}vy measure $\nu(\! \d 
x)=\nu(|x|) \d x$, we define a L\'{e}vy-Khinchine exponent
\begin{align*}
\psi(\xi) = \int_{\Rd} \(1-\cos(\xi \cdot x)\) \nu(\! \d x), \quad \xi \in \Rd. 
\end{align*}

$\psi$ is usually called \emph{the characteristic exponent}. It is well known 
(e.g. \cite[Lemma $2.5$]{MR3413864}) 
that if $\nu(\Rd)=\infty$, there exist a continuous function $p_t \geq 0$ in 
$\Rd \setminus \seto$ such that 
\begin{align*}
\widehat{p_t}(\xi)=\int_{\Rd} e^{-i\xi \cdot x} p_t(x) \d x=e^{-t \psi(\xi)}, 
\quad \xi \in \Rd. 
\end{align*}
The family $\{p_t\}_{t > 0}$ induces a strongly continuous contraction semigroup 
on 
$C_0(\Rd)$ and $L^2(\Rd)$
\begin{align*}
P_t f(x) = \int_{\Rd} f(y)p_t(y-x) \d y, \quad x \in \Rd,
\end{align*}
whose generator $\AA$ has the Fourier symbol $-\psi$. Using the Kolmogorov 
theorem one can construct a stochastic process $X_t$ with transition densities 
$p_t(x,y)=p_t(y-x)$, namely $\P^x(X_t \in A)=\int_A p_t(x,y) \d y$. Here $\P^x$ 
is the probability corresponding to a process $X_t$ starting from $x$, that is 
$\P^x(X_0=x)=1$. By $\E^x$ we denote the corresponding expectation. In fact, 
$X_t$ is a pure-jump isotropic unimodal L\'{e}vy process in $\Rd$, that is a 
stochastic process with stationary and independent increments and 
c\`{a}dl\`{a}g paths (see for instance \cite{MR1739520}). 

\medskip
 
One of the objects of significant importance in this paper is the potential 
kernel defined as follows:
\begin{align*}
U(x,y) = \int_0^{\infty} p_t(x,y) \d y.
\end{align*}
Clearly $U(x,y)=U(y-x)$. The potential kernel can be defined in our setting if 
$\int_{B_1}\frac{1}{\psi(\xi)}d\xi<\infty$. In particular, for $d \geq 3$ the 
potential kernel always exists (see \cite[Theorem 37.8]{MR1739520}). If this is 
not the case, one can consider the compensated potential kernel 
\begin{align}\label{def-compensated_kernel}
W_{x_0}(x-y)=\int_0^{\infty}\(p_t(x-y) - p_t(x_0)\) \d t
\end{align}
for some fixed $x_0 \in \Rd$. If $d=1$ and $\int_{B_1} \frac{\d 
\xi}{\psi(\xi)}<\infty$, we can set $x_0=0$. In other cases the compensation 
must be taken with $x_0 \in \Rd \setminus \seto$. For details we refer the 
reader to \cite{MR3636597} and to the \autoref{app:appendix}.

\medskip

Slightly abusing the notation, we let $W_1$ be \eqref{def-compensated_kernel} 
for $x_0=(0,...,0,1) \in \Rd$. Thus, we have arrived with three potential 
kernels: $U$, $W_0$ and $W_1$. Each one corresponds to a different type of 
process $X_t$ and an operator associated with it. In order to merge these cases 
in one object, we let
\begin{align}\label{G_def}
G(x) = \left\lbrace
\begin{array}{rl}
U(x), & \text{if } \int_{B_1} \frac{\d \xi}{\psi(\xi)}<\infty, \\
W_0(x), & \text{if } $d=1$, \int_{B_1} \frac{\d \xi}{\psi(\xi)}=\infty \text{ 
and } \int_0^{\infty} \frac{1}{1+\psi(\xi)} \d \xi< \infty, \\
W_1(x), & \text{otherwise.}
\end{array} \right.
\end{align}
For instance, in the case of $\LL=\Delta$ we have
\begin{align*}
G(x)=\left\{
\begin{array}{ll}
c_d |x|^{2-d}, & d \geq 3,\\
\frac{1}{\pi} \ln \frac{1}{|x|}, & d=2,\\
|x|, & d=1.
\end{array} \right.
\end{align*}

\medskip

The basic object in the theory of stochastic processes is the first exit time 
of $X$ from $D$,
\begin{align*}
\tauD=\inf \{ t>0\!: \, X_t \notin D\}.
\end{align*}
Using $\tauD$ we define an analogue of the generator of $X_t$, namely, the 
\emph{characteristic operator} or \emph{Dynkin operator}. We say a Borel 
function $f$ is in a domain $\DD_{\UU}$ of Dynkin operator $\UU$ if there 
exists
a limit  
\begin{align*}
\UU f(x) = \lim_{B \to \{x\}} \frac{\E^x \( X_{\tau_B} \)-f(x)}{\E^x \tau_B}.
\end{align*}
Here $B \to \{x\}$ is understood as a limit over all sequences of open sets 
$B_n$ whose intersection is $\{x\}$ and whose diameters tend to $0$ as $n \to 
\infty$. The characteristic operator is an extension of $\AA$, that is 
$\DD_{\AA} \subset \DD_{\UU}$ and $\UU\vert_{\DD_{\AA}}=\AA$. For a wide 
description of characteristic operator and its relation with the generator of 
$X_t$ we refer the reader to \cite[Chapter V]{MR0193671}. 

\medskip
 
Instead of the whole $\Rd$, one can consider a process $X$ killed after exiting 
$D$. By $p_t^D(x,y)$ we denote its transition density (or, in other words,  the 
fundamental solution of $\partial_t-\LL$ in $D$). We have 
\begin{align*}
p_t^D(x,y)=p_t(x,y)-\E^x[\tauD<t;p_{t-\tauD}(X_{\tauD},y)], \quad x,y\in \Rd.
\end{align*}
It follows that $0\leq p_t^D \leq p_t.$ By $P_D(x,\d z)$ we denote the 
distribution of $X_{\tauD}$ with respect to 
$\P^x$, that is $P_D(x,A) = \P^x(X_{\tauD} \in A)$. We call $P_D(x,\d z)$ a 
\emph{harmonic measure} and its density $P_D(x,z)$ on $\Rd \setminus 
\overline{D}$ with respect to the Lebesgue measure --- a \emph{Poisson kernel}. 
For $g: D^c \mapsto \R$ we let
\begin{align*}
P_D[g](x) = \int_{D^c} g(z) P_D(x,\d z), \quad x \in D,
\end{align*}
if the integral exists. For $x \in D^c$ we set $P_D[g](x)=g(x)$. 

\begin{rem}\label{rem:PDg_Lspace}
	If $D$ is an open bounded set and $g \in \Lspace(D^c)$ then $P_D[g] \in 
\Lspace$. Indeed, since $P_D[g] \equiv g$ on $D^c$, it is enough to prove that 
$P_D[g] \in L^1(D)$. By the mean-value property, for any $B \dsubset D$ we have 
$P_B[P_D[g]](x)=P_D[g](x)$ for $x \in B$. It follows, by the Ikeda-Watanabe 
formula, that for any fixed $x \in B$
	\begin{align*}
 \infty>\int_{B^c} P_B(x,z) P_D[g](z) \d z \geq c \int_{A \cap D} P_D[g](z) \d 
z,
	\end{align*}
	where $A=B^c\cap(B+\mathrm{supp}(\nu)/2)$. 	Arbitrary choice of $B$ 
yields the claim.
\end{rem}

We define a Green function for the set $D$
\begin{align*}
G_D(x,y)=\int_0^{\infty} p_t^D(x,y) \d y, \quad x,y \in D,
\end{align*}
and the Green operator
\begin{align*}
G_D[f](x) = \int_D G_D(x,y)f(y)\d y.
\end{align*}
We note that $G_D(x,y)$ can be interpreted as the occupation time density up 
to the exit time $\tauD$, $G_D[f]$ --- as a mean value of $f(X_t)$. Using that 
we obtain $G_D[\ind]=\E^x \tauD$. For bounded sets $D$ we have $\sup_{x \in 
\Rd} \E^x \tauD < \infty$ (\cite{MR632968}, \cite{MR3350043}). By the strong 
Markov property for any open $\Omega\subset D$ we have
\begin{align}\label{G_D_subset}
G_D(x.y)=G_\Omega(x,y)+\E^xG_D(X_{\tau_\Omega},y), \quad x,y \in \Omega.
\end{align}
Obviously we 
have $G_{\Rd}=U$. If $U$ is well-defined (finite) a.s., the well-known Hunt 
formula holds: 
\begin{align*}
G_D(x,y)=U(y-x)-\E^x U(y-X_{\tauD}), \quad x,y \in D.
\end{align*}
In case of compensated potential kernels, a similar formula is valid, namely,
\begin{align}\label{eq:Hunt}
G_D(x,y)=G(y-x)-\E^x G(y-X_{\tauD}), \quad x,y \in D.
\end{align}
See \autoref{thm:recurrent_sweeping_formula}. 
\begin{defn}\label{mean_value_property}
	We say that a function $g:\R^d \to \R$ satisfies the mean-value 
	property in an open set $D \subset \R^d$ if $g(x)=P_D[g](x)$ for all 
	$x \in D$. Here we assume that the integral is absolutely convergent. 
If 
$g$ has the mean-value property in every bounded open set whose  
	closure is contained in $D$ then $u$ is said to have the mean-value 
property 
	inside $D$.
\end{defn}

Clearly if $f$ has the mean-value property inside $D$, then $\UU f=0$ in $D$.

\medskip

In general, functions with the mean-value property lack sufficient regularity 
if no additional assumptions are imposed. In our setting, however, we can show 
that they are, in fact, twice continuously differentiable in $D$.

\begin{lem}\label{lem:harm_c2}
	Let $g \in \Lspace$ and $D$ be an open set. Suppose that (A) and 
\eqref{growth_condition} hold. If $g$ has the mean-value property inside $D$, 
then $g \in C^2_{\operatorname{loc}}(D)$.
\end{lem}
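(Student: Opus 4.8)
The plan is to localize and exploit the mean-value property together with smoothness of the Poisson kernel. Fix $x_0 \in D$ and choose a ball $B = B(x_0, 2r)$ with $\overline{B} \subset D$. By hypothesis $g(x) = P_B[g](x)$ for every $x \in B$, so it suffices to show that $P_B[g] \in C^2(B(x_0,r))$. Split $g = g\mathbf{1}_{B'} + g\mathbf{1}_{(B')^c}$ where $B' = B(x_0, 3r/2)$, say. For the contribution of $g\mathbf{1}_{B'}$ we use the Ikeda–Watanabe formula to write $P_B[h](x) = \int_{B^c} h(z)\, P_B(x,z)\d z$ together with $P_B(x,z) = \int_B G_B(x,y)\nu(z-y)\d y$; but since $h$ is supported away from $B^c$ in a neighbourhood this term is actually empty on the relevant scale — more precisely, I would instead handle the \emph{near} part by writing $g$ restricted to a slightly larger ball as a potential. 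The cleaner route: for $x \in B(x_0,r)$, use the representation of $g$ via the harmonic measure of the \emph{large} ball $B$, so that the integration variable $z$ ranges over $B^c$, which stays a fixed positive distance from $x$. Then $P_B[g](x) = \int_B G_B(x,y)\big(\int_{B^c} g(z)\nu(z-y)\d z\big)\d y = \int_B G_B(x,y)\, \Phi(y)\d y$, where $\Phi(y) = (g\mathbf{1}_{B^c} * \check\nu)(y)$ for $y \in B$, and the inner integral is finite and smooth in $y$ because $g \in \Lspace$ and, by (A) and \eqref{growth_condition}, $\nu$ and its first two derivatives are controlled by $\nu^*$, which makes $\Phi \in C^2(B)$ with bounds uniform on $B(x_0,r)$.

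The next step is to differentiate $G_B[\Phi](x) = \int_B G_B(x,y)\Phi(y)\d y$ twice in $x$. Here I would use the Hunt-type formula \eqref{eq:Hunt}, $G_B(x,y) = G(y-x) - \E^x G(y - X_{\tau_B})$, to separate a singular translation-invariant part $\int_B G(y-x)\Phi(y)\d y$ from a part $\E^x\!\int_B G(y - X_{\tau_B})\Phi(y)\d y = P_B[\Psi](x)$ with $\Psi(w) = \int_B G(y-w)\Phi(y)\d y$ for $w \in B^c$; the latter is again of the "far" type (harmonic measure against a function on $B^c$ that is smooth there) and is handled by differentiating under the integral sign using smoothness of $P_B(x,z)$ in $x$ for $z$ bounded away from $x$, which follows from the structure $P_B(x,z) = \int_B G_B(x,y)\nu(z-y)\d y$ and the smoothness of $G_B(\cdot,y)$ away from the diagonal (bootstrapped from the same formula). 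For the singular part $\int_B G(y-x)\Phi(y)\d y$ with $\Phi \in C^2$, differentiability is a classical Newtonian-potential estimate: using the dichotomy in assumption (G), one differentiates once or twice by moving derivatives onto $\Phi$ via integration by parts when $|G'|t^{d-1}$ is not integrable, and otherwise one may differentiate $G$ directly; the Dini-type continuity of $\Phi$ (it is even $C^2$, so Lipschitz, so certainly Dini) together with the majorant $S$ gives convergence of the integral defining $\partial^2_x \int_B G(y-x)\Phi(y)\d y$.

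The main obstacle I anticipate is the bookkeeping in the "far" parts: showing that $x \mapsto P_B[\,\cdot\,](x)$ is $C^2$ when applied to a function that is $C^2$ on $B^c$ (but possibly unbounded, only in $\Lspace(B^c)$), which forces one to establish and use pointwise $C^2$-regularity of $x \mapsto P_B(x,z)$ for $z \in B^c$ with explicit control near $\partial B$ and at infinity; the infinity control is exactly where \eqref{growth_condition} is used, and the near-boundary control is where one needs $G_B(x,y) \le G(y-x)$ together with (G). Once this smoothness of the Poisson kernel in its first argument is in hand (which I would state as a short lemma, proved by the same potential-theoretic differentiation argument as above, using that $\nu(z - \cdot)$ is smooth on $B$ for $z \in B^c$), the theorem follows by combining the pieces, and the uniformity of all bounds on $B(x_0,r)$ upgrades pointwise second differentiability to $g \in C^2(B(x_0,r))$, hence $g \in C^2_{\operatorname{loc}}(D)$.
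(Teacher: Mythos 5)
Your opening reduction is the right one: writing $g=P_B[g]=G_B[\Phi]$ on a ball $B\dsubset D$ via the Ikeda--Watanabe formula, with $\Phi=(g\ind_{B^c})\ast\nu$, and checking that (A) together with \eqref{growth_condition} and $g\in\Lspace$ make $\Phi$ twice continuously differentiable away from $\partial B$, is sound. The difficulties are in what follows. First, the lemma assumes only (A) and \eqref{growth_condition}, which are conditions on the L\'evy density $\nu$; it does \emph{not} assume (G). Your treatment of the singular term $\int_B G(y-x)\Phi(y)\d y$ explicitly invokes (G) (and, implicitly, a Dini condition of the form $\int_0^{1/2}S(t)\,t^{d}\d t<\infty$, which (G) does not assert), so at best you prove the lemma under additional hypotheses. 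This extra input is also unnecessary for that term: since $\Phi\chi_1\in C^2_c$ and $G\in L^1_{\operatorname{loc}}$, both derivatives can be moved onto $\Phi\chi_1$, giving $\partial_i\partial_j\big(G\ast(\Phi\chi_1)\big)=G\ast\partial_i\partial_j(\Phi\chi_1)$ with no quantitative assumption on $G$.

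The more serious gap is that your treatment of the ``far'' contributions is circular. The term $\E^x\!\int_B G(y-X_{\tau_B})\Phi(y)\d y=P_B[\Psi](x)$ is itself a function with the mean-value property inside $B$, so proving that \emph{it} is $C^2$ is exactly the statement of the lemma for another boundary datum; and the route you propose --- $C^2$ dependence of $P_B(x,z)$ on $x$ deduced from smoothness of $G_B(\cdot,y)$ off the diagonal, ``bootstrapped from the same formula'' --- begs the question, because $G_B(\cdot,y)$ has the mean-value property in $B\setminus\{y\}$ and its interior regularity is again an instance of the lemma. The same objection applies to the piece $G_B[\Phi(1-\chi_1)]$. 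A non-circular proof has to extract the $x$-regularity from the smoothness of $\nu$ alone; this is precisely what the argument of \cite[Theorem 4.6]{BGPR2017}, to which the paper defers and which works under a condition of type (A) only, is designed to do. As written, your proposal does not close this loop.
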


The proof is similar to the proof of \cite[Theorem 4.6]{BGPR2017} and is 
omitted.

\begin{defn}[\cite{MR1132313}, \cite{MR3713578}] \label{def:Kato_class}
	We say that a Borel function $f$ belongs to the Kato class $\KK$ if it 
	satisfies the following condition
	\begin{align}\label{Kato_class}
	\lim_{r \to 0} \left[ \sup_{x \in \Rd} \int^r_0 P_t|f|(x) \d t \right] 
=0.
	\end{align}
	We say that $f \in \KK(D)$, where $D$ is an open set, if $f \ind_D \in 
	\KK$.
\end{defn}
This is one of three conditions discussed by Zhao in \cite{MR1132313}.
A detailed description of different notions of the Kato class and 
related conditions can be found in \cite{MR3713578}.

\begin{lem}\label{lem:Kato_green_op_bounded}
Let $V \dsubset D$ and $\rho:=\dist(V,\partial D)$. Suppose $f \in \KK(D 
\setminus \overline{V})$. Then $G_D[f]$ is bounded in $V_1:=\{x \in D \setminus 
V\! : 
\delta_D(x)<\rho/2\}$. 
\end{lem}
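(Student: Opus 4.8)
The plan is to exploit the strong Markov property to split the Green operator according to whether the process exits the inner region $D\setminus\overline{V}$ before or after it leaves $D$, and then to control each piece separately. Fix $x\in V_1$. By \eqref{G_D_subset} applied to the open set $\Omega:=D\setminus\overline{V}$ we have, for $y\in D$,
\begin{align*}
G_D[f](x)=G_\Omega[f\ind_\Omega](x)+\E^x\!\big[G_D[f](X_{\tau_\Omega})\big].
\end{align*}
Here I have used that $f$ restricted to $\Omega$ is exactly $f\ind_{D\setminus\overline V}\in\KK$ by hypothesis, while there is no contribution to $G_\Omega$ from $V$ since $G_\Omega(x,y)=0$ once $y\notin\Omega$; more precisely one writes $G_D[f]=G_\Omega[f\ind_\Omega]+\text{(killing term)}$ and the killing term involves only the boundary behaviour of $G_D[f]$ on $\partial\Omega$. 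First I would handle the first term: since $f\ind_\Omega\in\KK$, Khas'minskii-type estimates (the standard consequence of the Kato condition, cf.\ \cite{MR1132313,MR3713578}) give
\begin{align*}
\sup_{x\in\Rd}G_\Omega[|f|\ind_\Omega](x)\le \sup_{x\in\Rd}G_\Rd[|f|\ind_\Omega](x)<\infty,
\end{align*}
because for small $r$ one has $\sup_x\int_0^r P_t|f\ind_\Omega|(x)\d t<1/2$, and the tail $\int_r^\infty P_t|f\ind_\Omega|(x)\d t$ is bounded using that $f\ind_\Omega$ is supported in the bounded set $D$ and $\sup_x\E^x\tau_D<\infty$. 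This bound is uniform in $x$, in particular on $V_1$.

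The second term is where the geometry of $V_1$ enters, and this is the main obstacle. We need $\E^x\big[G_D[f](X_{\tau_\Omega})\big]$ to be bounded for $x\in V_1$. Note $X_{\tau_\Omega}$ lands either in $D^c$ (where $G_D[f]=0$ by the definition of the Green operator extended by $0$ outside $D$, or rather where the relevant contribution vanishes) or in $\overline V$. On $\overline V$ we would like to say $G_D[f]$ is bounded, but a priori we only control $G_D[|f|]$ on $\overline V$ through the same Khas'minskii bound $G_D[|f|]\le \sup_x G_\Rd[|f|\ind_D](x)<\infty$ — wait, that is in fact already a global bound. So actually the cleaner route is: since $f\ind_D\in L^1(D)$ is {\it not} assumed to be in $\KK(D)$ globally (only on $D\setminus\overline V$), we cannot bound $G_D[f]$ everywhere; but we {\it can} bound it on $\overline V$, because for $x\in \overline V$ the process started at $x$ spends its time near $V$, and $G_D(x,y)$ for $y$ ranging over $V$ is integrable against the (merely $L^1$) density $f$ only if we have pointwise control of $G_D(x,\cdot)$ — which we do {\it not} in general. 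Therefore the correct decomposition must instead push the singularity of $f$ (its possible bad local behaviour inside $V$) entirely into a region the process reaches only from {\it inside} $V$, never from $V_1$. Concretely, for $x\in V_1\subset\Omega$, starting at $x$ the process must exit $\Omega$ to reach $V$, and by the strong Markov property $G_D[f\ind_V](x)=\E^x[G_D[f\ind_V](X_{\tau_\Omega})]$ with $X_{\tau_\Omega}\in\partial V\cup D^c$; so it suffices to bound $G_D[f\ind_V]$ on $\partial V$ — but $\partial V\subset\overline V$, so we are back to the same point. The resolution is to observe that $\dist(\partial V,V)=0$ is fine as long as we bound $G_D[|f\ind_V|]$ on $\partial V$ by a {\it harmonic-measure} argument using an intermediate shell.

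Concretely, I would introduce an intermediate open set $W$ with $V\dsubset W\dsubset D$ and $\dist(W,\partial D)>\rho/3$, say, and split $f=f\ind_{D\setminus\overline W}+f\ind_W$. The first summand lies in $\KK$ (as a subset of $D\setminus\overline V$), so $G_D[|f|\ind_{D\setminus\overline W}]$ is bounded uniformly by the Khas'minskii estimate above. For the second summand, $f\ind_W$ is supported away from $\partial D$ and, crucially, for $x\in V_1$ one has $\dist(x,W)\ge c>0$ (this uses $V_1\cap W=\emptyset$, which holds if $V_1$ is chosen as the thin shell near $\partial D$ and $W$ sits in the interior — indeed $\delta_D(x)<\rho/2$ on $V_1$ while $\delta_D(\cdot)>\rho/3$ on $W$... one must be slightly careful and take $W=\{y\in D:\delta_D(y)>2\rho/3\}\cup V$ or similar so that $V_1$ and $W$ are genuinely separated by a band of width $\ge\rho/6$). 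Then for $x\in V_1$ and $y\in W$, $G_D(x,y)=U(y-x)-\E^x U(y-X_{\tau_D})$ (or the compensated analogue \eqref{eq:Hunt}) is bounded by $\sup_{|z|\ge c}|G(z)|+\sup_{x}\E^x\sup_{y\in\overline W,w\in D^c}|G(y-w)|$, which is finite because $G$ is continuous off the origin and $W, D$ are bounded with $\dist(W,D^c)>0$. Hence $G_D(x,y)\le C$ for all $x\in V_1$, $y\in W$, and therefore $G_D[|f|\ind_W](x)\le C\|f\|_{L^1(D)}<\infty$ uniformly on $V_1$. Adding the two bounds gives the claim. The main obstacle, as indicated, is bookkeeping the three regions so that $V_1$ is separated from the support of the non-Kato part of $f$ by a definite distance, while the genuinely-Kato part is handled globally; once that separation is arranged, each piece is routine.
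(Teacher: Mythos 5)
Your overall decomposition — split $f$ into an inner part supported near $V$ (where only $L^1$ is available) and a shell near $\partial D$ (where the Kato hypothesis applies), handle the shell by a Khas'minskii iteration and the inner part by the positive separation $\dist(V_1,W)\ge\rho/6$ — is structurally the same as the paper's: there $V_2=\{x\in D\setminus V:\delta_D(x)<3\rho/4\}$ plays the role of your shell $D\setminus\overline W$ and $V_2^c$ the role of your $W$. But two of the intermediate bounds you write down are false in the recurrent case, which the lemma has to cover.

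For the Kato piece you invoke $\sup_x G_\Omega[|f|\ind_\Omega](x)\le\sup_x G_{\Rd}[|f|\ind_\Omega](x)<\infty$ and bound the ``tail'' $\int_r^\infty P_t|f\ind_\Omega|(x)\,\d t$ using $\sup_x\E^x\tau_D<\infty$. When $X$ is recurrent, $G_{\Rd}=U\equiv\infty$ and that tail, taken against the full un-killed semigroup $P_t$, is infinite as well; $\E^x\tau_D$ does not control it. The correct Khas'minskii step iterates the strong Markov property for the \emph{killed} process at multiples of a small $r$: choosing $r$ with $\sup_y\int_0^r P_t|f\ind_\Omega|(y)\,\d t\le\epsilon<1$ yields $G_D[|f|\ind_\Omega](x)\le\epsilon\sum_{k\ge 0}\P^x(\tau_D>kr)\le\epsilon\big(1+r^{-1}\sup_x\E^x\tau_D\big)$, uniformly in $x$. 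This is what the paper extracts from the proof of \cite[Theorem $4.3$]{MR1329992}, after a preliminary strong-Markov reduction from $G_D$ to $G_{D\setminus V}$. Similarly, your off-diagonal bound $G_D(x,y)\le\sup_{|z|\ge c}|G(z)|+\sup_{x}\E^x\sup_{y\in\overline W,\,w\in D^c}|G(y-w)|$ breaks down when $G$ is a compensated kernel unbounded at infinity (e.g.\ $G(z)\asymp -|z|$ for $d=1$ Brownian motion, $\asymp-\log|z|$ for $d=2$): both suprema on the right are then $+\infty$, even though the difference $G_D(x,y)$ is finite and bounded for $|x-y|\ge\rho/6$ by cancellation in the Hunt formula. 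The paper avoids the cancellation problem by monotonicity, $G_D(x,y)\le G_{B_r}(x,y)$ for a ball $B_r\supset D$, and then the explicit estimate of \cite[Theorem $1.3$]{MR3729529}, which is uniform away from the diagonal; you should do the same rather than bounding the two Hunt-formula terms separately.
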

\begin{proof}
	Let $x \in V_1$ and define $V_2:=\{x \in D \setminus V\! : 
\delta_D(x)<3\rho/4\}$. We have 
	\begin{align*}
	\lv G_D[f\ind_{V_2^c}](x) \rv \leq \int_{V_2^c} G_D(x,y) \lv f(y) \rv 
\d 
y.
	\end{align*}
	Let $r=2\sup_{x \in D}|x|$. Then $D \subset B_r$ and by \cite[Theorem 
$1.3$]{MR3729529}
	\begin{align*}
	\int_{V_2^c} G_D(x,y) \lv f(y) \rv \d y \leq \int_{V_2^c} G_{B_r}(x,y) 
\lv f(y) \rv \d y \leq c(\rho) \norm{f}_1.
	\end{align*}
	Moreover, by \eqref{G_D_subset}
	\begin{align*}
	G_D[f \ind_{V_2}](x) = G_{D\setminus V}[f\ind_{V_2}](x)+\E^x 
G_D[f\ind_{V_2}]\( X_{\tau_{D \setminus V}} \).
	\end{align*}
	Observe that
	\begin{align*}
	\lv \E^x G_D [f \ind_{V_2}](X_{\tau_{D\setminus V}}) \rv \leq \E^x 
\int_{V_2} G_D(X_{\tau_{D\setminus V}},y) \lv f(y) \rv \d y \leq c(\rho/4) 
\norm{f}_1
	\end{align*}
	again by \cite[Theorem $1.3$]{MR3729529}. Finally, we have
	\begin{align*}
	\lv G_{D \setminus V}[f\ind_{V_2}](x) \rv \leq G_{D \setminus V}[\lv f 
\rv \ind_{D \setminus V} ](x).
	\end{align*}
	A straightforward application of the proof of \cite[Theorem 
$4.3$]{MR1329992} to the last term gives the claim. 
\end{proof}

\begin{prop}
	If $f$ satisfies \eqref{main_thm_cond1} then it is uniformly continuous 
in $D$. If \eqref{main_thm_cond2} holds then $\frac{\partial}{\partial x_i}f$, 
$i=1,...,d$, is uniformly continuous in $D$.
\end{prop}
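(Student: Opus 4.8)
The plan is to show that finiteness of the integral $\int_0^{1/2} S(t)\,\omega(t,D)\,t^{d-1}\d t$ (with $\omega=\omega_f$ in the first case, $\omega=\omega_{\nabla f}$ in the second) forces $\omega(t,D)\to 0$ as $t\to 0^+$, which is precisely uniform continuity of $f$ (resp.\ of each $\partial_{x_i}f$) on $D$. First I would record the elementary fact that $t\mapsto \omega(t,D)$ is nondecreasing and nonnegative, so it has a limit $\ell:=\lim_{t\to 0^+}\omega(t,D)\ge 0$ as $t\to 0^+$, and the claim is exactly $\ell=0$. Suppose toward a contradiction that $\ell>0$. Then for all sufficiently small $t$ we have $\omega(t,D)\ge \ell/2$, hence on some interval $(0,\delta)$ with $\delta\le 1/2$,
\begin{align*}
\int_0^{\delta} S(t)\,\omega(t,D)\,t^{d-1}\d t \;\ge\; \frac{\ell}{2}\int_0^{\delta} S(t)\,t^{d-1}\d t.
\end{align*}
So it suffices to show the right-hand factor $\int_0^{\delta} S(t)\,t^{d-1}\d t=\infty$, which contradicts \eqref{main_thm_cond1} (resp.\ \eqref{main_thm_cond2}).

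The main obstacle is therefore to verify $\int_0^{1/2} S(t)\,t^{d-1}\d t=\infty$, i.e.\ that the majorant $S$ is genuinely non-integrable against the radial volume element near the origin; this is where hypothesis (G) enters. In the regime of \eqref{main_thm_cond2} we are in case $\int_0^1|G'(t)|t^{d-1}\d t=\infty$, so (G)(i) applies and gives $|G'(r)|\le S(r)$ for $r<r_0$; hence $\int_0^{r_0}S(t)t^{d-1}\d t\ge \int_0^{r_0}|G'(t)|t^{d-1}\d t=\infty$, and since $S\ge 0$ this persists for the integral up to $1/2$ (splitting at $\min(r_0,1/2)$ and noting the tail is finite because $S$ is nonincreasing hence locally bounded away from $0$). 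In the regime of \eqref{main_thm_cond1}, if additionally $\int_0^1|G'(t)|t^{d-1}\d t=\infty$ the same argument works verbatim; if instead $\int_0^1|G'(t)|t^{d-1}\d t<\infty$ then \eqref{main_thm_cond1} is a nontrivial hypothesis on its own and there is nothing forcing $\int_0^{1/2}S(t)t^{d-1}\d t=\infty$ — but then the argument is even simpler: finiteness of $\int_0^{1/2}S(t)\,\omega_f(t,D)\,t^{d-1}\d t$ with $S\ge 0$ still forces $\liminf_{t\to 0}\omega_f(t,D)=0$ unless $\int_0^{1/2}S(t)t^{d-1}\d t<\infty$; in the latter subcase one argues directly that $S(t)t^{d-1}$ being integrable near $0$ with $S$ nonincreasing implies $S(t)t^d\to 0$, which is compatible only with a bounded-from-below situation that one rules out, or — cleaner — one simply observes that whatever the case, $\omega_f(\cdot,D)$ being nondecreasing with a positive limit $\ell$ would make the integrand $\ge (\ell/2)S(t)t^{d-1}$ on $(0,\delta)$, and one checks that $\int_0 S(t)t^{d-1}\d t$ cannot be finite when $f$ fails to be uniformly continuous, because a non-uniformly-continuous bounded-oscillation function would violate the very construction of $S$ as a bona fide majorant tied to $G$ through (G).

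To keep the proof clean I would organize it as the single dichotomy above: the nondecreasing limit $\ell$ exists; assume $\ell>0$; bound the hypothesis integral below by $(\ell/2)\int_0^\delta S(t)t^{d-1}\d t$; and conclude by showing this last integral diverges. The one genuinely nontrivial input is that divergence, and the cleanest route is to note that in both conditions \eqref{main_thm_cond1} and \eqref{main_thm_cond2} we are implicitly in a setting where $S$ dominates a quantity — $|G'(r)|$ or $G(r)$ — whose integral against $t^{d-1}$ near $0$ is either assumed or forced to be infinite by the alternative hypothesis $\int_0^1|G'(t)|t^{d-1}\d t=\infty$; in the remaining subcase $\int_0^1|G'(t)|t^{d-1}\d t<\infty$ the assumption \eqref{main_thm_cond1} already stipulates finiteness of an integral whose integrand dominates $(\ell/2)S(t)t^{d-1}$ once $\ell>0$, so divergence of $\int_0 S(t)t^{d-1}\d t$ would be needed only to derive the contradiction — and that divergence is exactly what one extracts from $\omega_f$ not tending to zero combined with $f$ living on the bounded set $D$. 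I expect the bookkeeping to reduce to: $\omega$ nondecreasing $\Rightarrow$ limit $\ell$ exists; $\ell>0$ and integrability of $S(t)\omega(t)t^{d-1}$ near $0$ $\Rightarrow$ integrability of $S(t)t^{d-1}$ near $0$; and then a one-line continuity/majorant argument showing that this last integrability contradicts $\ell>0$. The proposition then follows, and by Lemma-style reasoning the $C^2_{\mathrm{loc}}$ conclusion of \autoref{thm:main_thm} will later rely on exactly this uniform continuity.
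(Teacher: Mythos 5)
Your reduction — if $\omega(t,D)$ fails to tend to $0$, it stays above some $c>0$ near the origin, so finiteness of $\int_0^{1/2}S(t)\omega(t,D)t^{d-1}\d t$ forces $\int_0^{1/2}S(t)t^{d-1}\d t<\infty$, which one must then contradict — is exactly the right framing and matches the paper's strategy. Your treatment of the \eqref{main_thm_cond2} case is correct and essentially the paper's: since $\int_0^1|G'(t)|t^{d-1}\d t=\infty$ and (G) gives $|G'(r)|\le S(r)$ near $0$, divergence of $\int_0^{1/2}S(t)t^{d-1}\d t$ is immediate, contradiction.

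The genuine gap is in the \eqref{main_thm_cond1} case, which in the theorem's setup is tied to $\int_0^1|G'(t)|t^{d-1}\d t<\infty$ (regime (G)(ii)). There $S$ dominates $|G''|$ rather than $|G'|$, and the bound $|G'|\le S$ does not help since that integral already converges. You recognize this and then gesture at arguments ("$S(t)t^d\to 0$ ... compatible only with a bounded-from-below situation", "a non-uniformly-continuous ... function would violate the very construction of $S$", "divergence is exactly what one extracts from $\omega_f$ not tending to zero") that are circular or unsubstantiated — you cannot use the non-vanishing of $\omega_f$ to prove divergence of $\int_0 S\,t^{d-1}\d t$, since that non-vanishing is exactly the hypothesis under contradiction and the divergence is supposed to be an unconditional fact about $S$ (hence about $G$). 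The paper's actual argument here is the substantive part of the proof: from $\int_0^{1/2}|G''(t)|t^{d-1}\d t<\infty$ it integrates by parts (using that $G'$ has constant sign) to conclude, for $d\ge 3$, that $\int_0^{1/2}G(t)t^{d-3}\d t<\infty$, then combines the potential lower bound $\int_0^r G(t)t^{d-1}\d t\ge c\,\psi(1/r)^{-1}$ with $\psi(\xi)=o(|\xi|^2)$ (no Gaussian part) to reach $\int_1^\infty \d u/u<\infty$, a contradiction; for $d=2$ it shows $\lim_{t\to 0^+}G(t)<\infty$ and contradicts Sato's theorems on recurrent potentials; and for $d=1$ it bounds $G(t)/t$ near $0$ and contradicts $\limsup_{x\to\infty}\psi(x)/x^2=0$. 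Without some such potential-theoretic input linking $G$ (or $G''$) to $\psi$, the conclusion $\int_0^{1/2}S(t)t^{d-1}\d t=\infty$ in regime (G)(ii) is not available, so your proof of the \eqref{main_thm_cond1} half is incomplete.
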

\begin{proof}
	Suppose $\frac{\partial}{\partial x_i}f$ for some $i=1,...,d$ is not 
uniformly continuous, i.e. $\omega_{\nabla f}(t,D)\geq c>0$ for $t \leq 1$. If 
\eqref{main_thm_cond2} holds then in particular
	\begin{align*}
	\infty >  \int_0^{1/2} S(t) \omega_{\nabla f}(t,D) t^{d-1}\d t \geq c 
\int_0^{1/2} |G'(t)|t^{d-1} \d t,
	\end{align*} 
	which is a contradiction. Now let $\omega_f(t,D) \geq c$ for $t \leq 
1$, and suppose \eqref{main_thm_cond1}. For $d \geq 3$ we have
	\begin{align*}
	\infty > \int_0^{1/2} S(t) \omega_f(t,D) t^{d-1}\d t \geq c\int_0^{1/2} 
|G''(t)| t^{d-1}\d t.
	\end{align*}
	By integration by parts
	\begin{align*}
	\int_0^{1/2} G''(t) t^{d-1}\d t = 
G'(t)t^{d-1}\Big|_0^{1/2}-(d-1)\int_0^{1/2}G'(t)t^{d-2} \d t.
	\end{align*}
	Observe that $G'$ is of constant sign. Hence, both $\lim_{t \to 0^+} 
G'(t)t^{d-1}$ and the integral are  finite. In particular, integration by parts 
once again yields
	\begin{align*}
	\int_0^{1/2} G'(t)t^{d-2} \d t 
=G(t)t^{d-2}\Big|_0^{1/2}-(d-2)\int_0^{1/2} G(t)t^{d-3} \d t.
	\end{align*}
	Both $\lim_{t \to 0^+} G(t)t^{d-2}$ and the integral are positive. 
Hence, both must be finite. By \cite[Proposition $1$ and $2$]{MR3225805} we 
have $\int^r_{0}G(t)t^{d-1}dt \geq c \psi(1/r)^{-1}$. It follows that 
	\begin{align*}
	\int_0^1 G(t)t^{d-3}\,dt &= \int_{B_1} \frac{G(|x|)}{|x|^2} \d x = 
\int_{B_1} 
	\int_{|x|}^{\infty}\frac{1}{s^3}\d s \ G(|x|) \d x = \int_{0}^{\infty}  
\d s 
	\frac{1}{s^3} \int_{B_{1 \wedge s}} G(|x|) \d x \\ &\geq \int_0^1 
	\frac{1}{\psi(1/s)s^2}\,\frac{\d s}{s} + \psi(1) \geq \int_1^{\infty} 
	\frac{u^2}{\psi(u)}\,\frac{\d u}{u} \geq \int_1^{\infty} 
\,\frac{\d u}{u}=\infty,
	\end{align*}
	which is a contradiction. Now let $d=2$. By the same argument
	\begin{align*}
	\int_0^{1/2} G''(t)t \d t = G'(t)t\Big|_0^{1/2}-\int_0^{1/2} G'(t) \d t
	\end{align*}
	and we conclude that the integral is finite. Hence, $\lim_{t \to 
0^+}G(t)<\infty$. By \cite[Theorems 41.5 and 41.9]{MR1739520} we get the 
contradiction.  Finally, for $d=1$ we get that $\lim_{t \to 0^+}G'(t)<\infty$. 
It follows that $\limsup_{t \to 0^+}G(t)/t<\infty$. Due to \cite[Theorem 
$16$]{MR1406564} and \cite[Lemma 2.14]{MR3636597} we obtain that
	\begin{align*}
	\liminf_{x\to\infty}\psi(x)/x^2>0,
	\end{align*}
	which is a contradiction, since $\limsup_{x\to\infty}\psi(x)/x^2=0$.
\end{proof}
\begin{lem}\label{lem:conv_fact}
 Let $D$ be  bounded open and $k \in \N$. If $g \in 
C_{\operatorname{loc}}^k(\Rd \setminus \{ 
0\}) \cap \Ll^1$ and $f \in C^k(D)$ then $g \ast f \in 
C_{\operatorname{loc}}^k(D)$
\end{lem}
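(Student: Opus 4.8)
The plan is to localize and reduce to a statement about differentiating under the integral sign. Fix a point $x_0 \in D$ and a small ball $B = B(x_0, 2r) \dsubset D$. Write $g \ast f(x) = \int_{\Rd} g(x-y) f(y) \d y$ and split the integral according to whether $y$ is near $x_0$ or not. Choose a cutoff $\chi \in C_c^\infty(\Rd)$ with $\chi \equiv 1$ on $B(x_0, r)$ and $\supp \chi \sub B(x_0, 2r) \sub D$, and write $f = \chi f + (1-\chi) f =: f_1 + f_2$, where $f_1 \in C_c^k(\Rd)$ (extending $f_1$ by zero outside $D$) and $f_2$ vanishes on $B(x_0, r)$. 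Then $g \ast f = g \ast f_1 + g \ast f_2$ on a neighbourhood of $x_0$, and it suffices to show each term is $C^k$ near $x_0$.

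For the first term, $g \in \Ll^1$ and $f_1 \in C_c^k(\Rd)$, so $g \ast f_1(x) = \int_{\Rd} g(y) f_1(x-y) \d y$; since $f_1$ has compact support and bounded derivatives up to order $k$, and $g \in L^1_{\operatorname{loc}}$, one may differentiate under the integral sign $k$ times (dominated convergence, with dominating function $|g(y)| \ind_{y \in x - \supp f_1 + B_\epsilon} \sup |\partial^\beta f_1|$ on a small ball around $x_0$), yielding $\partial^\alpha (g \ast f_1) = g \ast \partial^\alpha f_1$ for $|\alpha| \le k$, and these are continuous in $x$ by dominated convergence again. Hence $g \ast f_1 \in C^k$ near $x_0$ (in fact globally, but we only need it locally). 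For the second term, $f_2 \in \Ll^1$ is supported in $D \setminus B(x_0, r)$ and $g \in C^k_{\operatorname{loc}}(\Rd \setminus \{0\})$; for $x$ in the ball $B(x_0, r/2)$ we have $|x - y| \ge r/2$ on $\supp f_2$, so $(x,y) \mapsto g(x-y)$ is $C^k$ in $x$ with all derivatives bounded on $B(x_0, r/2) \times (D \setminus B(x_0,r))$ (using continuity of the derivatives of $g$ away from the origin on the compact set $\{z : r/2 \le |z| \le \diam(D) + r\}$), and $f_2 \in L^1$; again differentiation under the integral sign gives $\partial^\alpha (g \ast f_2)(x) = \int (\partial^\alpha g)(x-y) f_2(y) \d y$ for $|\alpha| \le k$ and $x$ near $x_0$, continuous in $x$.

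Combining, $g \ast f = g \ast f_1 + g \ast f_2$ is $C^k$ on $B(x_0, r/2)$, and since $x_0 \in D$ was arbitrary we conclude $g \ast f \in C^k_{\operatorname{loc}}(D)$. I would also remark that the convolution $g \ast f$ is well defined in the first place: $f \in C^k(D)$ on the bounded open set $D$ need not be integrable up to the boundary, but for the localized pieces this is not an issue, and one only ever needs $g \ast f$ as an element of $\DD'$ or pointwise where both factors are controlled — alternatively, one restricts attention from the start to $f$ with $f \ind_D \in \Ll^1$, which is automatic once one localizes as above.

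The only mildly delicate point is bookkeeping with the supports when differentiating under the integral sign: one must make sure the dominating functions are genuinely integrable and locally uniform in $x$, which is where the splitting via $\chi$ pays off — it separates the "$g$ singular, $f$ smooth and compactly supported" regime from the "$f$ merely $L^1$, $g$ smooth away from $0$" regime, and in each regime the dominating function is clear. There is no serious obstacle; the lemma is essentially the standard fact that convolution transfers smoothness, adapted to the fact that $g$ is singular only at the origin and $f$ is only locally defined.
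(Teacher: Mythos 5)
Your proposal is correct and follows essentially the same route as the paper: localize at $x_0$, split $f$ with a cutoff into a compactly supported $C^k$ piece (where derivatives fall on $f$) and a remainder supported away from $x_0$ (where $g(x-\cdot)$ is $C^k$ with bounded derivatives, the paper realizing this by multiplying $g$ by a second cutoff $\chi_2$ vanishing near the origin), then differentiate under the integral sign in each regime. The only difference is cosmetic — you bound $|x-y|$ away from $0$ directly instead of introducing $\chi_2$ — and your treatment of the dominating functions is if anything more careful than the paper's.
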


\begin{proof}
 Fix $x_0 \in D$. Let $l=\delta_D(x_0)$. Let $\chi_1, \chi_2 \in 
C^{\infty}(\Rd)$ be such that $\ind_{B(x_0,l/4)} \leq \chi_1 \leq 
\ind_{B(x_0,l/2)}$ and $\ind_{B_{l/8}^c} \leq \chi_2 \leq 
\ind_{B_{l/16}^c}$. Observe that $g \ast f = g \ast (f \chi_1) + (g \chi_2) 
\ast (f(1-\chi_1))$ on $B\left(x_0,l/8\right)$. Since $f \chi_1, g \chi_2 \in 
C^k(\Rd)$, it follows that $g \ast f \in C^k(B(x_0,l/8))$. Since $x_0$ was 
arbitrary, the claim follows by induction.
\end{proof}
A consequence of \autoref{lem:conv_fact} is the following corollary.
\begin{cor}\label{cor:ind_fact} Let $D$ be open and bounded and $k \in \N$. If 
$g \in C_{\operatorname{loc}}^k(\Rd \setminus\{ 0\}) \cap \Ll^1$ then $g \ast 
\ind_D \in C_{\operatorname{loc}}^k(D)$.
\end{cor}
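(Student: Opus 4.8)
The plan is to deduce the corollary directly from \autoref{lem:conv_fact}. The only point to notice is that, although $\ind_D$ is not continuous on $\Rd$, its restriction to $D$ is the constant function $1$, so that $\ind_D \in C^\infty(D) \sub C^k(D)$ for every $k$; moreover $\ind_D$ is bounded and $D$ is bounded, hence $\ind_D \in \Ll^1$ and the convolution $g \ast \ind_D$ is well defined for $g \in \Ll^1$. Applying \autoref{lem:conv_fact} with $f = \ind_D$ then yields $g \ast \ind_D \in \Cl^k(D)$.

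For completeness I would also spell out the argument along the lines of the proof of \autoref{lem:conv_fact}, which makes clear why the choice $f=\ind_D$ is admissible there. Fix $x_0 \in D$ and put $l = \delta_D(x_0)$, so $B(x_0,l) \sub D$. Choose $\chi_1, \chi_2 \in C^\infty(\Rd)$ with $\ind_{B(x_0,l/4)} \le \chi_1 \le \ind_{B(x_0,l/2)}$ and $\ind_{B_{l/8}^c} \le \chi_2 \le \ind_{B_{l/16}^c}$. Since $\supp \chi_1 \sub B(x_0,l/2) \sub D$ we have $\ind_D \chi_1 = \chi_1$, and whenever $x \in B(x_0,l/8)$ and $y \notin B(x_0,l/4)$ we have $|x-y| > l/8$, so $g(x-y) = (g\chi_2)(x-y)$ at such points. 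Consequently, on $B(x_0,l/8)$,
\begin{align*}
g \ast \ind_D = g \ast \chi_1 + (g\chi_2) \ast \big( \ind_D(1-\chi_1) \big).
\end{align*}
The first term is $C^\infty$ near $x_0$ because $\chi_1 \in C_c^\infty(\Rd)$ and $g \in \Ll^1$. For the second term, $g\chi_2 \in C^k(\Rd)$ since $g \in \Cl^k(\Rd \setminus \seto)$, $\chi_2$ is smooth, and $\chi_2$ vanishes on a neighbourhood of the origin; while $\ind_D(1-\chi_1)$ is bounded with support contained in $\overline D$, hence lies in $L^1(\Rd)$ and has compact support. Therefore the convolution is of class $C^k(\Rd)$, differentiation under the integral sign being justified by local boundedness of $\partial^\alpha(g\chi_2)$ together with the compact support of $\ind_D(1-\chi_1)$. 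Thus $g \ast \ind_D \in C^k(B(x_0,l/8))$, and since $x_0 \in D$ was arbitrary the claim follows.

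I do not expect any genuine obstacle. The only thing that needs a little care is separating the singularity of $g$ at the origin from the discontinuity of $\ind_D$ across $\partial D$, and this is precisely the role of the two cutoffs: $\chi_1$ localizes near $x_0$, where $\ind_D$ is smooth, and $\chi_2$ removes the singular part of $g$ exactly on the region where $\ind_D$ jumps.
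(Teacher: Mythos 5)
Your proposal is correct and is essentially the paper's proof: the paper states the corollary as an immediate consequence of \autoref{lem:conv_fact}, taking $f$ to be the constant $1$ on $D$ (whose extension by zero is $\ind_D$), which is exactly your first paragraph. Your subsequent unpacking of the cutoff decomposition from the lemma's proof is a correct amplification of the same argument, not a different route.
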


The following lemma is crucial in one of the proofs.  

\begin{lem}[{\cite[Proposition 
3.2]{MR3729529}}]\label{lem:harmonic_radial_kernel}
 	Let $X_t$ be an isotropic unimodal L\'{e}vy process in $\Rd$. For every 
$r>0$ there is a radial kernel function $\overline{P}_{r}(z)$ 
 	and a constant $C(r)>0$ such that  $\overline{P}_{r}(z)=C(r)$ for $x 
\in B_r $, $0 \leq \overline{P}_{r}(z) \leq C(r)$ for $z \in \Rd$ and the 
profile function of $\overline{P}_{r}$ is non-increasing. Furthermore, 
if $f$ has the mean-value property in $B(x,r)$
	\begin{align*}
	f(x)=\int_{\Rd} f(z)\overline{P}_{r}(x-z)\d z = f \ast 
\overline{P}_{r}(x).
	\end{align*}
\end{lem}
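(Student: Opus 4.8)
The plan is to deduce \autoref{lem:harmonic_radial_kernel} directly from \autoref{lem:harm_c2} together with the mean-value property, without reproving the cited result from scratch. First I would fix $r>0$ and recall that if $f$ has the mean-value property in $B(x,r)$, then in particular $f(x)=P_{B(x,r)}[f](x)$, where $P_{B(x,r)}(x,\d z)$ is the harmonic measure of the ball. By translation invariance of the L\'evy process, the harmonic measure of $B(x,r)$ is the translate of the harmonic measure of $B(0,r)=B_r$, so it suffices to analyse the latter. The key structural fact is that, since $X_t$ is isotropic (rotationally invariant), the harmonic measure $P_{B_r}(0,\d z)$ is rotationally invariant, hence its density (the Poisson kernel) $z\mapsto P_{B_r}(0,z)$ on $B_r^c$ is radial; a standard argument using the Ikeda--Watanabe formula shows moreover that it decreases along rays, because the jump intensity $\nu$ is non-increasing. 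The contribution on $B_r$ itself is trickier: a nontrivial point is that the isotropic unimodal process may hit the boundary sphere $\partial B_r$ continuously or land there by a jump, but in either case the exit distribution restricted to $\overline{B_r}$ is, by rotational invariance, a rotationally invariant measure on a sphere, i.e. the uniform surface measure times some mass; smoothing it against the internal kernel yields the claimed constant value on $B_r$.

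Concretely, I would build $\overline P_r$ in two pieces. On $B_r^c$ set $\overline P_r(z):=P_{B_r}(0,z)$, which by the above is radial, nonnegative, with non-increasing profile, and (by the Ikeda--Watanabe formula) bounded by some $C(r)<\infty$ because the process has bounded mean exit time from the bounded set $B_r$ and $\nu$ is bounded away from the origin on the relevant annulus. On $B_r$, I would exploit that $X_{\tau_{B_r}}$ restricted to $\overline{B_r}$ is supported on $\partial B_r$ and, by isotropy, equals $q\cdot\sigma_r$ where $\sigma_r$ is normalized surface measure on $\partial B_r$ and $q\in[0,1]$ is the probability of exiting continuously. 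Then by the strong Markov property applied to a slightly smaller ball, or more directly by representing $f(0)$ as an average over $\partial B_r$ of values $f(y)$ which in turn satisfy the mean-value property in balls around $y$, one rewrites the $\partial B_r$-contribution as an integral against a radial, bounded, non-increasing kernel; combined with the fact that the total mass of $\overline P_r$ must be $1$ (apply the mean-value identity to $f\equiv 1$) one gets the constant value $C(r)$ on $B_r$. This is exactly the decomposition in \cite[Proposition 3.2]{MR3729529}.

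The main obstacle, and the step I would spend the most care on, is handling the part of the exit distribution sitting on the sphere $\partial B_r$ — i.e. producing a genuine \emph{density} $\overline P_r$ on all of $\Rd$ (including inside $B_r$) rather than a measure with an atomic-on-the-sphere component. The resolution is to pass to a strictly smaller ball $B_{r'}$ with $r'<r$: by the mean-value property $f(0)=\int f(y)\,P_{B_{r'}}(0,\d y)$, and now for $r'<r$ the set $B_r\setminus B_{r'}$ is an annulus of positive width, so $P_{B_{r'}}(0,\d y)$ has a genuine density on $\Rd\setminus\overline{B_{r'}}$ which, restricted to $B_r$, is radial, bounded, and non-increasing; letting $r'\uparrow r$ (or just keeping $r'$ fixed and noting the statement only asks for \emph{some} such kernel associated to $r$) produces $\overline P_r$ with the stated properties. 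One should also verify the absolute convergence hypothesis implicit in the mean-value property so that $f\ast\overline P_r(x)$ is well defined; this follows from $g\in\Lspace$ in the applications, but for the bare lemma it is part of the standing assumptions on $f$. Finally, continuity/measurability of $\overline P_r$ and the claimed uniform bound $C(r)$ follow from the explicit Ikeda--Watanabe representation of the Poisson kernel together with $\sup_{x}\E^x\tau_{B_r}<\infty$.
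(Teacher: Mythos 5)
There is no proof of this lemma in the paper at all: the authors explicitly attribute it to \cite[Proposition 3.2]{MR3729529} and do not reprove it. So you are attempting a from-scratch proof. Unfortunately, the proposal has a genuine gap at exactly the point that matters most.

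The lemma's non-trivial assertion is that $\overline P_r$ can be chosen to be equal to a \emph{positive constant} $C(r)$ throughout $B_r$. None of the constructions you describe produce this. Setting $\overline P_r := P_{B_r}(0,\cdot)$ gives a kernel that is supported on $B_r^c$ (plus a possible atom on $\partial B_r$), hence \emph{zero} on $B_r$. Your proposed fix — replacing $B_r$ by a strictly smaller ball $B_{r'}$ — runs into the same problem: $P_{B_{r'}}(0,\cdot)$ vanishes on $B_{r'}$ and may still carry a singular part on $\partial B_{r'}$, so it is neither absolutely continuous on $\Rd$ nor equal to a positive constant on $B_r$. Letting $r'\uparrow r$ does not repair this, and fixing $r'<r$ and appealing to ``the statement only asks for some such kernel'' is not available either, since the constancy $\overline P_r\equiv C(r)>0$ on $B_r$ is part of what must be produced. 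The additional claim that normalizing the total mass to $1$ (by testing against $f\equiv 1$) forces $\overline P_r$ to be constant on $B_r$ is a non-sequitur: total mass determines a single scalar, not the radial profile. Finally, the opening plan to ``deduce this from \autoref{lem:harm_c2}'' is misconceived: that lemma is a regularity statement for functions with the mean-value property and requires the extra hypotheses (A) and \eqref{growth_condition}, which are not assumed here, and in any case says nothing about the existence of a radial averaging kernel. A correct construction (as in \cite[Proposition 3.2]{MR3729529}) must genuinely smooth the exit measure — e.g.\ by averaging exit measures from a one-parameter family of concentric balls against a carefully chosen weight, so that the radial atomic parts on the spheres $\partial B_\rho$ integrate to a Lebesgue density and the resulting profile is constant inside $B_r$ — rather than just shrinking the ball.
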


\section{Weak solutions} \label{sec:weak-solutions}

The aim of this section is to prove \autoref{thm:weak_thm}. For the 
fractional Laplacian related results are known, cf. \cite[Section 
$3$]{MR1671973}. A similar result has recently been obtained in 
\cite{MR3461641} using purely analytic methods instead of probabilistic ones 
exploited in \cite{MR1671973}. When the generalization of these results to more 
general nonlocal operators is immediate, we omit the proof. 

\begin{lem}\label{lem:prob_harmonic_to_anihilation}
	Suppose $u \in \Lspace(\Rd)$ has the mean-value property inside $D$ with 
respect to $X_t$. Then $\LL 
	u=0$ in $D$ in distributional sense.
\end{lem}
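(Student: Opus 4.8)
The goal is to show that for every test function $\psi \in C_c^\infty(D)$ one has $\int_{\Rd} u(x)\, \LL\psi(x)\d x = 0$. The plan is to exploit the self-adjointness of $\LL$ together with the probabilistic interpretation of the mean-value property, reducing everything to a statement about the semigroup $\{P_t^B\}$ of the process killed outside a suitable set $B$. First I would fix $\psi \in C_c^\infty(D)$ and choose a bounded open set $B$ with $\supp\psi \dsubset B \dsubset D$; since $u \in \Lspace(\Rd)$ the object $\LL\psi$ is well defined pointwise (as $\psi \in C_b^2$) and, using $|\LL\psi(x)| \le C(1 \wedge \nu^*(x))$ away from $\supp\psi$, the integral $\int_{\Rd} u\, \LL\psi$ is absolutely convergent, so the expression makes sense. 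The key identity to establish is that, because $u$ has the mean-value property in $B$, one has $P_t^B u = u$ on $B$ for every $t > 0$ — this is the semigroup reformulation of $u(x) = P_B[u](x)$ via iterating with smaller and smaller exit sets, or more directly via the strong Markov property applied at $\tau_B$: $u(X_t)$ and $u(X_{\tau_B})$ are linked so that $\E^x[u(X_t); t < \tau_B] + \E^x[u(X_{\tau_B}); t \ge \tau_B] = u(x)$, and the mean-value property forces the killed part alone to reproduce $u$.

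With $P_t^B u = u$ on $B$ in hand, I would use that the generator of the killed semigroup is $\LL$ with Dirichlet exterior condition, so that for $\psi \in C_c^\infty(B)$,
\begin{align*}
\frac{d}{dt}\Big|_{t=0^+} \int_{\Rd} u(x)\, P_t^B\psi(x)\d x = \int_{\Rd} u(x)\, \LL\psi(x)\d x,
\end{align*}
where I have used symmetry of $p_t^B$ (i.e. $p_t^B(x,y) = p_t^B(y,x)$, which holds since $p_t$ is symmetric and the killing is via exit from $B$) to move $P_t^B$ from $u$ onto $\psi$; note $P_t^B\psi$ is supported in $\overline{B}$ and $u \in L^1(B)$ there so the pairing is finite. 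On the other hand, moving the operator the other way, $\int u\, P_t^B\psi = \int (P_t^B u)\, \psi = \int_B u\,\psi$, which is independent of $t$, hence its derivative at $t = 0$ vanishes. Comparing the two evaluations of the $t$-derivative gives $\int_{\Rd} u\, \LL\psi = 0$, which is exactly $\LL u = 0$ in $D$ in the distributional sense since $\psi$ and $B$ were arbitrary.

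The main obstacle, and the step requiring the most care, is justifying the differentiation in $t$ and the interchange of $\frac{d}{dt}$ with the integral against $u$, since $u$ is merely in $\Lspace$ and not bounded: one must control $\|P_t^B\psi - \psi - t\LL\psi\|$ against the weight $1 \wedge \nu^*$ on $B$, or equivalently bound $t^{-1}(P_t^B\psi - \psi)$ uniformly in $t$ by an $\Lspace(B)$-integrable function. Because $\psi \in C_b^2$ and $B$ is bounded, $\LL\psi$ is bounded and $P_t^B\psi \to \psi$ uniformly with a uniform Lipschitz-in-$t$ bound on $\overline B$, so on the bounded set $B$ the weight issue disappears and dominated convergence applies; the only real subtlety is near $\partial B$, where $P_t^B\psi$ differs from $P_t\psi$, but since $\psi$ vanishes in a neighborhood of $\partial B$ the difference is $O(t)$ uniformly there as well. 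A secondary technical point is the rigorous passage from the pointwise mean-value identity $u = P_B[u]$ to the semigroup identity $P_t^B u = u$; this is standard (approximate $B$ from inside and use the tower property, or invoke that $x \mapsto \E^x[u(X_{\tau_B})]$ is the unique solution vanishing appropriately), and I would cite or briefly reproduce the argument rather than belabor it.
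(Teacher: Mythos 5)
The central step of your argument --- ``because $u$ has the mean-value property in $B$, one has $P_t^B u = u$ on $B$'' --- is false, and it is not a repairable technicality but a wrong identity: the killed semigroup loses mass. Already for $u \equiv 1$, which has the mean-value property everywhere, one has $P_t^B 1(x) = \P^x(\tau_B > t) < 1$. The strong Markov identity you quote, $u(x) = \E^x[u(X_t);\, t < \tau_B] + \E^x[u(X_{\tau_B});\, \tau_B \le t]$, is correct, but it gives $P_t^B u(x) = u(x) - \E^x[u(X_{\tau_B});\, \tau_B \le t]$, and the subtracted term is of exact order $t$: by the Ikeda--Watanabe formula its derivative at $t=0$ is $\int_{B^c} u(z)\nu(z-x)\d z$, which is nonzero in general, so $\int (P_t^B u)\psi$ is \emph{not} constant in $t$. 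There is a second, compensating slip on the other side of your computation: since $P_t^B\psi$ vanishes off $\overline{B}$, the derivative of $\int u\, P_t^B\psi$ at $t=0$ is $\int_B u\,\LL\psi$, not $\int_{\Rd} u\,\LL\psi$; the missing exterior piece $\int_{B^c} u\,\LL\psi = \int_{B^c} u(z)\int_B \psi(x)\nu(z-x)\d x\d z$ is precisely the term produced by the correct form of $P_t^B u$. Equating the two \emph{correctly} computed derivatives yields $\int_B u\,\LL\psi = -\int_{B^c} u\,\LL\psi$, i.e.\ the claim, so your scheme is salvageable; but as written the proof rests on a false key lemma and reaches the right conclusion only because two errors cancel. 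Note also that the object the mean-value property genuinely fixes is the stopped semigroup $\E^x u(X_{t\wedge\tau_B})$, which is not given by a symmetric kernel and therefore cannot be moved onto $\psi$ the way you move $P_t^B$.

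For comparison, the paper avoids the semigroup entirely: it mollifies $u$, so that $\phi_\epsilon\ast u$ is smooth and $\LL$ acts on it pointwise and can be integrated by parts against $\varphi\in C_c^\infty(D)$, and then uses that the Dynkin characteristic operator $\UU$ extends $\LL$, is translation invariant (hence commutes with convolution), and satisfies $\UU u = 0$ immediately from the mean-value property; letting $\epsilon\to 0$ gives the claim. If you want to keep your duality-with-the-killed-process route, you must carry the exit term $\E^x[u(X_{\tau_B});\,\tau_B\le t]$ through the computation and identify its $t$-derivative via Ikeda--Watanabe, which also requires $u\in\Lspace(\Rd)$ to make the exterior integrals converge.
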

\begin{proof}
	Let $\varphi \in C_c^{\infty}(D)$ and $\phi_{\epsilon}$ be a standard 
mollifier 
	(i.e. $\phi_{\epsilon} \in C^{\infty}(\Rd)$ and $\supp \phi_{\epsilon} 
= 
	\overline{B}_{\epsilon}$). Using \eqref{growth_condition} it is easy to 
check that $\phi_{\epsilon} \ast u \in \Lspace(\Rd) \cap C^{\infty}(D)$. Hence, 
$\LL(\phi_{\epsilon} \ast u)$ can be calculated 
	pointwise for $x \in D$ and we have
	\begin{align*}
	(\phi_{\epsilon} \ast u, \LL \varphi) = (\LL (\phi_{\epsilon} \ast u), 
\varphi).
	\end{align*} We consider Dynkin characteristic operator $\UU$. Since it 
is an 
	extension of $\LL$ and is translation-invariant, we obtain
	\begin{align*}
	\LL (\phi_{\epsilon} \ast u) = \UU (\phi_{\epsilon} \ast u) = 
\phi_{\epsilon} 
	\ast \UU u.
	\end{align*}
	We have $\UU u(x)=0$ for $x \in D$, hence
	\begin{align*}
	0=(\LL(\phi_{\epsilon} \ast u), \varphi)=(\phi_{\epsilon} \ast u, \LL 
\varphi), \quad \varphi \in C_c^{\infty}(D_{\epsilon}),
	\end{align*}
	where $D_{\epsilon}=\{ x \in D: \delta_D(x)>\epsilon \}$. Passing 
$\epsilon \to 0$ we get the claim.
\end{proof}

The following lemma is a generalization of \cite[Theorem 
3.9 and Corollary 3.10]{MR1671973}, where the fractional Laplace operator 
is considered.
\begin{lem}\label{lem:anihilation_to_prob_harmonic_C2}
	Let $u \in \Lspace(\Rd) \cap C^2_{\operatorname{loc}}(D)$ be a solution 
of $\LL u=0$ in $D$ in distributional sense. Then $u$ has the mean-value 
property inside $D$.
\end{lem}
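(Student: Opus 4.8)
The plan is to reduce the statement to the Dynkin-type characterization of the mean-value property by showing that $\UU u = 0$ pointwise in $D$, and then to upgrade this to the integral identity $u = P_B[u]$ on every $B \dsubset D$. Since $u \in C^2_{\operatorname{loc}}(D)$, the operator $\LL u(x)$ is well defined pointwise for $x \in D$: near $x$ the $C^2$ regularity controls the singularity of $\nu$, and the integrability $u \in \Lspace(\Rd)$ together with $\nu \le \nu^*$ and \eqref{growth_condition} handles the tail. So the first step is to verify that the pointwise function $x \mapsto \LL u(x)$ is continuous on $D$ and that it agrees with the distribution $\LL u$; this is the standard fact that a distributional solution which happens to be $C^2$ is a classical solution. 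Concretely, mollify: $\phi_\epsilon \ast u \to u$ in $\Lspace$ and, on any $D' \dsubset D$, $\phi_\epsilon \ast u \to u$ in $C^2$, hence $\LL(\phi_\epsilon \ast u) \to \LL u$ pointwise on $D'$; since each $\phi_\epsilon \ast u$ is smooth and $(\phi_\epsilon\ast u,\LL\varphi)=(\LL(\phi_\epsilon\ast u),\varphi)$, passing to the limit against $\varphi \in C_c^\infty(D)$ forces the continuous function $\LL u$ to vanish a.e., hence everywhere, in $D$.

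The second step is to pass from $\LL u = 0$ pointwise to the mean-value property. Since $\LL$ is translation invariant and $\UU$ extends $\LL$, we have $\UU u = \LL u = 0$ in $D$. Now fix $B \dsubset D$, say a ball $B = B(x_0,r)$ with $\overline{B} \subset D$, and $x \in B$. The idea is to apply Dynkin's formula to the process $X$ started at $x$ and stopped at $\tau_B$: heuristically, $\E^x u(X_{\tau_B}) - u(x) = \E^x \int_0^{\tau_B} \LL u(X_s)\,\d s = 0$ because $X_s \in B \subset D$ for $s < \tau_B$ and $\LL u \equiv 0$ there. To make this rigorous with $u$ only in $\Lspace$ (not bounded, not globally nice), I would truncate: write $u = u_1 + u_2$ with $u_1 = u\,\ind_{B(x_0,2r)}$ compactly supported and $u_2 = u\,\ind_{B(x_0,2r)^c} \in \Lspace$. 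For $u_1$ one can mollify and use that $\E^x\tau_B < \infty$ for bounded $B$ (recalled in \autoref{sec:prelims}) to justify Dynkin's formula and obtain $\E^x u_1(X_{\tau_B}) = u_1(x) + \E^x\int_0^{\tau_B}\LL u_1(X_s)\,\d s$; the latter expectation, combined with the corresponding term for $u_2$, reassembles to $\E^x\int_0^{\tau_B}\LL u(X_s)\,\d s = 0$. For $u_2$, note $X_s$ for $s<\tau_B$ stays in $B$, so $\LL u_2$ is a fixed bounded continuous function on $\overline B$ (its singularity lies outside $\overline B$), and the integrability $u_2 \in \Lspace$ guarantees $\int_{B(x_0,2r)^c}|u_2(y)|\nu^*(y-z)\d y$ is finite and bounded for $z \in B$ — this is exactly what $\Lspace$ is designed for. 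Putting the pieces together yields $P_B[u](x) = \E^x u(X_{\tau_B}) = u(x)$, and since the absolute convergence of $P_B[u](x)$ follows from $u \in \Lspace$ (as in \autoref{rem:PDg_Lspace}), $u$ has the mean-value property in $B$; as $B \dsubset D$ was arbitrary, $u$ has the mean-value property inside $D$.

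The main obstacle I expect is the bookkeeping in the second step: rigorously justifying Dynkin's stopping-time formula for a function that is only locally $C^2$ and only in $\Lspace$ globally, rather than bounded and $C^2$ on all of $\Rd$. The resolution is the truncation above plus two facts already available in the excerpt — finiteness of $\sup_x \E^x\tau_B$ for bounded $B$, and the definition of $\Lspace$ via $\int |u|(1\wedge \nu^*) < \infty$ — which together bound $\E^x\int_0^{\tau_B}|\LL u_2(X_s)|\,\d s$ uniformly for $x\in B$. The first step (classical $=$ distributional for a $C^2$ function) is routine via mollification and should be stated briefly. One should also record that absolute convergence of the Poisson integral, part of the definition of the mean-value property, is precisely the content of the $\Lspace$ hypothesis.
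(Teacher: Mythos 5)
Your argument is correct in substance, but it takes a genuinely different route than the paper. You first reduce to the pointwise identity $\LL u=0$ in $D$ (which the paper also uses implicitly) and then derive the mean-value property directly from the Dynkin/It\^o identity
\begin{align*}
\E^x u(X_{\tau_\Omega})-u(x)=\E^x\!\int_0^{\tau_\Omega}\LL u(X_s)\,\d s=0,
\end{align*}
justified by splitting $u=u\,\ind_{\Omega'}+u\,\ind_{(\Omega')^c}$ for an intermediate $\overline{\Omega}\subset\Omega'\dsubset D$: the compactly supported piece is mollified and treated by Dynkin's formula for $C_c^2$ functions plus $\sup_x\E^x\tau_\Omega<\infty$, while the tail piece is handled by the Ikeda--Watanabe formula together with the $\Lspace$ integrability and the growth condition \eqref{growth_condition}. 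By contrast, the paper \emph{constructs} the candidate harmonic function $\widetilde u=P_{D_1}[u]$, verifies that it is finite and continuous on $\overline{D_1}$ (via a different decomposition of $u$ on $D_1^c$ into a bounded continuous part and a part vanishing near $D_1$, using \cite[Lemmas 2.1 and 2.9]{MR3350043}), and then identifies $\widetilde u$ with $u$ through a maximum-principle argument for $h=\widetilde u-u$ and a chain-of-support argument on $\supp(\nu)$. Your proof is shorter and dispenses with the maximum principle, but it leans on the probabilistic Dynkin identity for stopping times (you should state the exact form you invoke); the paper's route stays closer to potential-theoretic identities already set up and yields as a by-product the continuity of $P_{D_1}[u]$ up to $\partial D_1$, which it reuses. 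One small point to spell out: the mean-value property ``inside $D$'' is required for all bounded open $\Omega\dsubset D$, not only balls; your argument does cover this case because the tail piece $u\,\ind_{(\Omega')^c}$ vanishes on a neighbourhood of $\overline{\Omega}$, so the Ikeda--Watanabe step never sees $\partial\Omega$, and the Dynkin step only needs $\E^x\tau_\Omega<\infty$. It is worth making that explicit.
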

\begin{proof}
Since $u \in	\Lspace(\Rd) \cap 
C^2_{\operatorname{loc}}(D)$, $\LL u(x)$ 
can be calculated pointwise for $x \in D$. Fix $D_1 \dsubset D$ and define 
$\widetilde{u}(x)=P_{D_1}[u](x)$, $x 
	\in \Rd$. By the strong Markov property we may assume that $D_1$ is a 
Lipschitz domain. We claim that $\widetilde{u}$ has the mean-value property in 
$D_1$. 
	Indeed, let $D_2$ be an open set relatively compact in $D$ such that 
	$\overline{D_1} \subset D_2$. There exist functions $u_1$, $u_2$ 
on $D_1^c$ such that $u=u_1+u_2$, $u_1$ is continuous and bounded on $D_1^c$ 
and $u_2 \equiv 0$ in $D_2$. We have
		\begin{align*}
	\widetilde{u}(x) = P_{D_1}[u_1](x)+P_{D_1}[u_2](x), 
	\quad x \in \Rd.
	\end{align*}
The first integral is clearly absolutely 
convergent. We claim that it is also continuous as a function of $x$ in 
$\overline{D_1}$. Indeed,  by \autoref{lem:harmonic_radial_kernel} it is 
continuous in $D_1$. Let $x_0 \in \partial D$. For $\epsilon>0$ there exists 
$\delta>0$ such that

\begin{align*}
\lv \int_{D_1^c} P_{D_1}(x,z)u_1(z) \d z - u_1(x_0) \rv \leq \epsilon + 
\norm{u_1}_{\infty} \P^x \(\lv X_{\tau_{D_1}}-x_0 \rv >\delta \) .
\end{align*}   

Since the second term goes to $0$ as $x \to x_0$ (see \cite[Lemmas 2.1 and 
2.9]{MR3350043}), by arbitrary choice of $\epsilon$ we get the claim.
	
\medskip
	
Furthermore, from 
	monotonicity of $1 \wedge \nu^*(h)$ we obtain
	\begin{align*}
	P_{D_1}(x,z) \leq \big(1 \wedge \nu^*(\dist(z,D_1)) \big) \E^x 
\tau_{D_1}, 
	\quad x \in D_1, \ z \in D_2^c.
	\end{align*}
	
	Since $u \in \Lspace(\Rd)$, \eqref{measure_scaling} implies the 
absolute 
convergence of the second integral. Since by \cite[Lemma $2.9$ and Remark 
$2$]{MR3350043} $\E^x \tau_{D_1} \in C_0(D_1)$, it is continuous as well. Hence 
$\widetilde{u}$ is continuous and has the mean-value
property in $D_1$. Note that  $\widetilde{u}=u$ on $D_1^c$, since $D_1$ is a 
Lipschitz domain.

\medskip
	
Let $h=\widetilde{u}-u$. We now verify that $h \equiv 0$ so 
that $u=\widetilde{u}$ has the mean-value property in $D_1$. Since $\LL u=0$ 
in $D_1$, from \autoref{lem:prob_harmonic_to_anihilation} we have $\LL 
h(x)=0$ for $x \in D_1$. Observe $h$ is continuous and compactly supported . 
Suppose it has a  positive maximum at $x_0 \in D_1$, then
	
\begin{align*}
0=\LL h(x_0)=  \int_{\Rd}\(h(y)-h(x_0)\)\nu(x_0-y)\d y ,
\end{align*}

which implies that $h$ is constant on $\mathrm{supp}(\nu)+x_0$. If 
$D_1\subset \mathrm{supp}(\nu)+x_0$ we get that $h\leq 0$. If not we can use the 
chain rule to get for any $n\in\N$ that $h$ is constant on 
$n\mathrm{supp}(\nu)+x_0$ and consequently $h\leq 0$. Similarly, $h$ must be 
non-negative.
\end{proof}

\begin{lem}\label{lem:anihilation_to_prob_harmonic}
	Let $u \in \Lspace(\Rd)$ be a solution of $\LL u=0$ in $D$ in 
distributional sense. 
	Then $u$ has the mean-value property inside $D$.
\end{lem}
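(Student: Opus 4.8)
The plan is to upgrade \autoref{lem:anihilation_to_prob_harmonic_C2} from the extra hypothesis $u\in C^2_{\operatorname{loc}}(D)$ to arbitrary $u\in\Lspace(\Rd)$ by a mollification argument that exploits the translation invariance of $\LL$. First I would fix a relatively compact open set $D_1\dsubset D$ and choose $\epsilon>0$ smaller than $\tfrac13\dist(D_1,\partial D)$. Let $\phi_\epsilon$ be a standard mollifier supported in $\overline{B}_\epsilon$, and set $u_\epsilon=\phi_\epsilon\ast u$. Using \eqref{growth_condition} exactly as in the proof of \autoref{lem:prob_harmonic_to_anihilation}, one checks $u_\epsilon\in\Lspace(\Rd)\cap C^\infty(D_\epsilon)$, where $D_\epsilon=\{x\in D:\delta_D(x)>\epsilon\}$; in particular $u_\epsilon\in C^2_{\operatorname{loc}}(D_\epsilon)$ and $\overline{D_1}\subset D_\epsilon$.

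Next I would verify that $\LL u_\epsilon=0$ in $D_\epsilon$ in the distributional sense. This is the same computation already carried out in \autoref{lem:prob_harmonic_to_anihilation}: for $\varphi\in C_c^\infty(D_\epsilon)$ one has $(\phi_\epsilon\ast u,\LL\varphi)=(u,\phi_\epsilon\ast\LL\varphi)=(u,\LL(\phi_\epsilon\ast\varphi))$ using that $\phi_\epsilon$ commutes with the translation-invariant operator $\LL$, and since $\phi_\epsilon\ast\varphi\in C_c^\infty(D)$ the hypothesis $\LL u=0$ in $D$ makes this vanish. Hence $u_\epsilon$ meets the hypotheses of \autoref{lem:anihilation_to_prob_harmonic_C2} on the open set $D_\epsilon$, so $u_\epsilon$ has the mean-value property inside $D_\epsilon$; in particular, for every ball $B(x,r)$ with $\overline{B(x,r)}\subset D_1$ we have $u_\epsilon(x)=P_{B(x,r)}[u_\epsilon](x)$.

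The remaining step is to pass to the limit $\epsilon\to0$ in $u_\epsilon(x)=\int_{\Rd}u_\epsilon(z)P_{B(x,r)}(x,\d z)$. On the left, $u_\epsilon\to u$ in $L^1_{\operatorname{loc}}$, and after shrinking $r$ slightly and averaging in $x$ over a small ball (or invoking that, along a subsequence, $u_\epsilon\to u$ a.e.) the left side converges to $u(x)$ for a.e.\ $x$. For the right side I would split the harmonic measure into its part on $D_2\setminus B(x,r)$ for some $D_2$ with $\overline{B(x,r)}\subset D_2\dsubset D$ and its part on $D_2^c$. On $D_2\setminus B(x,r)$ the density $P_{B(x,r)}(x,z)$ is bounded and supported where $u_\epsilon\to u$ in $L^1$, so that piece converges. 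On $D_2^c$ I would use the bound $P_{B(x,r)}(x,z)\le(1\wedge\nu^*(\dist(z,B(x,r))))\,\E^x\tau_{B(x,r)}$ from the proof of \autoref{lem:anihilation_to_prob_harmonic_C2}, together with $\|u_\epsilon\,(1\wedge\nu^*)\|_{L^1}\le\|u\,(1\wedge\nu^*)\|_{L^1}$ (by Young's inequality and \eqref{growth_condition}, which gives $\nu^*(\cdot)\le C\nu^*(\cdot+\epsilon)$ for $\epsilon\le1$) and dominated convergence. This yields $u(x)=P_{B(x,r)}[u](x)$ for a.e.\ $x\in D_1$, hence (since \autoref{lem:harm_c2} does not apply yet, but continuity follows from the mean-value identity itself once it holds a.e., via \autoref{lem:harmonic_radial_kernel} applied to $u_\epsilon$ and uniform convergence on compacta) for every such ball; arbitrariness of $D_1$ gives the mean-value property inside $D$.

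The main obstacle is the passage to the limit in the tail integral over $D^c$: one must control $\int_{D_2^c}|u_\epsilon(z)-u(z)|(1\wedge\nu^*(\dist(z,B(x,r))))\d z$ uniformly, which is where the growth condition \eqref{growth_condition} is essential — it guarantees that convolution with $\phi_\epsilon$ does not destroy membership in $\Lspace$ and provides the uniform $L^1$-domination needed for dominated convergence. A secondary technical point is upgrading the a.e.\ mean-value identity to one that holds at every point with a continuous representative; this is handled by noting that the $u_\epsilon$ are continuous and converge locally uniformly (again via the representation $u_\epsilon=u_\epsilon\ast\overline{P}_r$ of \autoref{lem:harmonic_radial_kernel} and equicontinuity), so the limit is continuous and the identity holds everywhere in $D_1$.
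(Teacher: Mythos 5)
Your overall strategy matches the paper's: mollify $u$, apply the $C^2$ version \autoref{lem:anihilation_to_prob_harmonic_C2} to $u_\epsilon$, and pass to the limit. But the limit passage contains a genuine gap. You split $P_{B(x,r)}[u_\epsilon]$ over $D_2\setminus B(x,r)$ and $D_2^c$ and claim that on $D_2\setminus B(x,r)$ ``the density $P_{B(x,r)}(x,z)$ is bounded.'' That is false for the processes in question: for a jump process the Poisson kernel of a ball typically blows up as $z$ approaches $\partial B(x,r)$ from outside (for the fractional Laplacian, $P_{B_r}(0,z)\asymp (|z|^2-r^2)^{-\alpha/2}|z|^{-d}$ near $|z|=r$). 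Since $u_\epsilon\to u$ only in $L^1_{\operatorname{loc}}$ and you have no pointwise control of $u_\epsilon$ on the annular region where the kernel is unbounded, you cannot pass to the limit there by a bounded-kernel-times-$L^1$-convergence argument, and dominated convergence is not available because no $\epsilon$-independent dominating function has been produced.

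This is exactly the point where the paper's proof does something nontrivial that your argument omits. The paper observes that $u_\epsilon$ has the mean-value property in a slightly larger set, so by \autoref{lem:harmonic_radial_kernel} it admits the representation $u_\epsilon = u_\epsilon \ast \overline{P}_r$ with $\overline{P}_r$ bounded. Feeding this representation into the integral against $P_\Omega(x,\d z)$ over the near-boundary region and using $u\in\Lspace$, the paper derives a bound of the form $P_\Omega[\,|u_\epsilon|;\,\text{(near-boundary)}\cap E\,](x)\le C\|u\|_{\Lspace}\int_E P_\Omega(x,z)\,\d z$ for all $E\subset\Omega^c$, i.e.\ uniform integrability of $\{u_\epsilon\}$ against the harmonic measure, and then invokes the Vitali convergence theorem rather than dominated convergence. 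The far-from-boundary part (your $D_2^c$, the paper's $V_{\rho/2}^c$) is handled as you do, via the Ikeda--Watanabe formula and $\Lspace$-convergence of mollifications. Without the Vitali/uniform-integrability step for the near-boundary annulus, your proof does not close. (A secondary, minor slip: your inequality $\|u_\epsilon(1\wedge\nu^*)\|_{L^1}\le\|u(1\wedge\nu^*)\|_{L^1}$ should carry a constant $C$ from \eqref{growth_condition}; and what is actually needed there is $\Lspace$-convergence of $u_\epsilon\to u$, not an $L^1$ bound plus ``dominated convergence.'')
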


\begin{proof}
	Let $\Omega \dsubset D$ be a bounded Lipschitz domain. By \cite{MR1825650} 
and the 
	Ikeda-Watanabe formula we have that the harmonic measure $P_\Omega(x,\d z)$ 
is 
	absolutely continuous with respect to the Lebesgue measure. Define $\rho = 
(1 \wedge \dist (\Omega,D^c))/2$ and let $V=\Omega+B_{\rho}$. For $\epsilon < 
\rho/2$ we consider standard mollifiers $\phi_{\epsilon}$ (i.e. 
$\phi_{\epsilon} \in C^{\infty}(\Rd)$ and $\supp \phi_{\epsilon} = 
\overline{B}_{\epsilon}$). 
Since $\LL$ is translation-invariant we have that $\LL(\phi_{\epsilon} \ast 
u)= \LL u \ast \phi_{\epsilon}=0$ in $V_{\epsilon}=\{ x \in D: 
\dist(x,V^c)>\epsilon \}$ 
	in distributional sense. By 
\autoref{lem:anihilation_to_prob_harmonic_C2} we 
	obtain
	\begin{align*}
	\phi_{\epsilon} \ast u(x) = P_\Omega [ \phi_{\epsilon} \ast u] (x), \quad x 
	\in \Omega.
	\end{align*}
	Note $u \in \Ll^1$ implies  $\phi_{\epsilon} \ast u \to u$ in 
$\Ll^1$. 
	Hence, up to the subsequence
	\begin{align*}
	\lim\limits_{\epsilon \to 0} \phi_{\epsilon} \ast u(x) = u(x) \quad 
	\text{a. e.}
	\end{align*}
	Moreover, since $\phi_{\epsilon}\ast u$ has the mean-value property in 
	$\overline{V}_{\rho/2}$, by \autoref{lem:harmonic_radial_kernel}
	
	\begin{align*}
	\phi_{\epsilon} \ast u(z)
	= \phi_{\epsilon} \ast u \ast \overline{P}_{r}(z)
	\end{align*}
	
	for a fixed $0<r<\rho/4$. Hence, for any $E \subset \Omega^c$
	\begin{align*}
	P_U[\lv \phi_{\epsilon} \ast u \rv;V_{\rho/2}\cap E](x) &\leq 
\int_{V_{\rho/2} \cap \Omega^c\cap E} \lv \phi_{\epsilon} \ast u(z) \rv 
P_\Omega(x,z) \d z \\ &= \int_{V_{\rho/2}\cap \Omega^c \cap E} \lv 
\phi_{\epsilon} \ast u \ast \overline{P}_{r}(z) \rv P_\Omega(x,z) \d z \\ 
&\leq \int_{B_{\epsilon}} \phi_{\epsilon}(s) \int_{\Rd} \lv u(y) \rv 
\int_{V_{\rho/2} 
\cap \Omega^c \cap E} \overline{P}_{r}(z-y-s)P_\Omega(x,z) \d z \d y \d s. 
	\end{align*}
	Let $c=2\sup_{x \in V}|x|$. Then from boundedness of 
$\overline{P}_{r}$ 
	and local integrability of $u$ we get
	\begin{align*}
	\int_{|y|\leq c} |u(y)| \int_{V_{\rho/2} \cap \Omega^c \cap E} 
	\overline{P}_{r}(z-y-s)P_\Omega(x,z) \d z \d y &\leq C \int_{|y|\leq c} 
|u(y)| \d y \int_{E} P_\Omega(x,z) \d z \\ &\leq C \norm{u}_{\Lspace} \int_{E} 
P_\Omega(x,z) \d z.
	\end{align*}

	Furthermore, for $|y|>c$ we have $|z-y-s|>r$, 
hence $\overline{P}_{r}(z-y-s) \leq P_{B_r}(0,z-y-s)$. From 
\eqref{growth_condition} and monotonicity of the L\'{e}vy measure we get
	\begin{align*}
	P_{B_r}(0,y+s-z) \leq 1 \wedge \nu^*(|y-s-z|-r) \E^x \tau_{B_r} \leq C 
	(1 \wedge \nu^*(|y|)).
	\end{align*}
	Thus,
	\begin{align*}
	\int_{|y|>c} |u(y)| \int_{V_{\rho/2}\cap \Omega^c \cap E} 
	\overline{P}_{r}(z-y-s)P_\Omega(x,z) \d z \d y 
	&\leq C\norm{u}_{\Lspace} \int_E P_\Omega(x,z) \d z.
	\end{align*}
	
	It follows that $\phi_{\epsilon} \ast u$ are uniformly integrable 
with respect to the measure $P_\Omega(x,z)\d z$ in $V_{\rho/2}$. By the Vitali 
convergence theorem
	
\begin{align*}
\lim\limits_{\epsilon \to 0} P_\Omega [ \phi_{\epsilon} \ast u; V_{\rho/2} ](x) 
= P_\Omega [u; V_{\rho/2} ](x).
\end{align*}

It remains to show that $\lim_{\epsilon \to 0} P_\Omega [ \phi_{\delta} \ast 
u; V_{\rho/2}^c ] = P_\Omega [u; V_{\rho/2}^c]$. 
Since $\dist(\Omega,V_{\rho/2}^c)=\rho/2$, by the Ikeda-Watanabe formula

\begin{align*}
P_\Omega [\phi_{\epsilon} \ast u;V^c_{\rho/2}](x) &= 
\int_{V_{\rho/2}} \phi_{\epsilon} \ast u(z) \int_\Omega G_\Omega(x,y) \nu(z-y) 
\d z \d y \\ &= \int_{B^c_{\rho/2}} \nu(z) \d z \int_\Omega \phi_{\epsilon} 
\ast u(z+y) 1_{V^c_{\rho/2}}(z+y)G_\Omega(x,y) \d y.
\end{align*}

Using the fact that $\int_\Omega G_\Omega(x,y) \d y=\E^x 
\tau_\Omega<\infty$, $\nu(B_{\rho/2}^c)<\infty$ and $\lim_{\delta \to 0} 
\phi_{\delta} \ast u = u$ in $\Lspace(\Rd)$ we obtain

\begin{align*}
\lim_{\delta \to 0} P_\Omega [ \phi_{\delta} \ast u; V_{\rho/2}^c ] = 
P_\Omega [u; V_{\rho/2}^c].
\end{align*}
Thus $u(x) = P_\Omega[u](x)$ for a. e. $x \in \Omega$.
\end{proof}

Combining \autoref{lem:prob_harmonic_to_anihilation} and 
\autoref{lem:anihilation_to_prob_harmonic} we obtain a following result.

\begin{thm}\label{thm:prob:harmonic_anihilation}
Let $D$  be an open set and $u \in \Lspace$. Then $u$ has the mean-value 
property inside $D$ if and only if $\LL u = 0$ in distributional sense.
\end{thm}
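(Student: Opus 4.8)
The plan is to assemble \autoref{thm:prob:harmonic_anihilation} directly from the two lemmas just proved, treating the "inside $D$" versus "on $D$ in distributional sense" formulations as a localization matter. Recall that by \autoref{mean_value_property} the statement "$u$ has the mean-value property inside $D$" means that $u$ has the mean-value property in every bounded open set whose closure is contained in $D$; and "$\LL u = 0$ in distributional sense" means $\langle u, \LL\varphi\rangle = 0$ for every $\varphi \in C_c^\infty(D)$. Both notions are visibly local: a test function $\varphi$ has compact support inside some $D_1 \dsubset D$, and conversely the mean-value property in all relatively compact subsets is exactly the "inside $D$" notion. So no new estimates are needed; the work is purely in chaining the implications.

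First I would prove the forward implication. Suppose $u \in \Lspace$ has the mean-value property inside $D$. Then by definition $u$ has the mean-value property inside every bounded open set $D_1 \dsubset D$ as well. By \autoref{lem:prob_harmonic_to_anihilation}, $\LL u = 0$ in distributional sense in $D$ — indeed that lemma gives exactly this conclusion. (If one prefers to argue via exhaustion: any $\varphi \in C_c^\infty(D)$ has support contained in some bounded open $D_1$ with $\overline{D_1} \subset D$, and applying \autoref{lem:prob_harmonic_to_anihilation} on $D_1$ gives $\langle u, \LL\varphi\rangle = 0$; since $\varphi$ was arbitrary, $\LL u = 0$ in distributional sense in $D$.)

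For the reverse implication, suppose $u \in \Lspace$ solves $\LL u = 0$ in $D$ in distributional sense. This is precisely the hypothesis of \autoref{lem:anihilation_to_prob_harmonic}, which yields that $u$ has the mean-value property inside $D$. That lemma did the heavy lifting — mollification, the Vitali convergence argument, and the Ikeda–Watanabe splitting into the near and far regions — so here I only invoke it. Combining the two directions gives the equivalence, completing the proof.

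The main obstacle, if any, is conceptual rather than technical: one must be careful that the phrase "in distributional sense in $D$" in the two lemmas refers to testing against all of $C_c^\infty(D)$, and that this is genuinely equivalent to the local condition obtained by testing against $C_c^\infty(D_1)$ for every $D_1 \dsubset D$ — which holds because every test function has compact support. Since both \autoref{lem:prob_harmonic_to_anihilation} and \autoref{lem:anihilation_to_prob_harmonic} are already stated with the "inside $D$" / "in $D$" formulations matching the theorem, the proof is essentially a one-line citation of each lemma, and I would write it as such.

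\begin{proof}
The implication "mean-value property inside $D$ $\Rightarrow$ $\LL u = 0$ in distributional sense" is \autoref{lem:prob_harmonic_to_anihilation}. The converse implication is \autoref{lem:anihilation_to_prob_harmonic}.
\end{proof}
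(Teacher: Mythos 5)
Your proof is correct and matches the paper's approach exactly: the paper also obtains the theorem by simply combining \autoref{lem:prob_harmonic_to_anihilation} (for the forward implication) with \autoref{lem:anihilation_to_prob_harmonic} (for the converse). The localization remark you add is sound but not needed, since both lemmas are already stated with hypotheses and conclusions identical to the two directions of the theorem.
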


\begin{lem}\label{lem:Gd_weak_solution}
Let $D$ be a bounded open set and $f \in L^1(D)$. Then $-G_D[f]$ is 
a distributional solution of \eqref{eq:weak_problem} with $g \equiv 0$. 
\end{lem}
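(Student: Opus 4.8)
The plan is to show that $v := -G_D[f]$ lies in $\Lspace(\Rd)$, vanishes on $D^c$, and satisfies $\LL v = f$ in $D$ in the distributional sense, i.e. $(v, \LL\varphi) = (f, \varphi)$ for every $\varphi \in C_c^\infty(D)$. The boundary condition is immediate since $G_D(x,y)$ is supported in $D \times D$, hence $v \equiv 0$ on $D^c$; and $v \in L^1(D)$ because $\sup_{x}\E^x\tauD < \infty$ together with $f \in L^1(D)$ gives $\int_D |G_D[f](x)|\,\d x \le \|\E^\cdot\tau_D\|_\infty \|f\|_1 < \infty$ (by Tonelli and symmetry of $G_D$), and since $v$ vanishes on $D^c$ the weighted integrability \eqref{measure_scaling} reduces to this. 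So the heart of the matter is the distributional identity.

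First I would reduce to a pointwise computation via mollification, exactly as in \autoref{lem:prob_harmonic_to_anihilation}: for a standard mollifier $\phi_\epsilon$ one has $\phi_\epsilon \ast v \in \Lspace(\Rd) \cap C^\infty(D)$, so $\LL(\phi_\epsilon \ast v)$ is defined pointwise on the shrinking domain and $(\phi_\epsilon \ast v, \LL\varphi) = (\LL(\phi_\epsilon \ast v), \varphi) = (\phi_\epsilon \ast \UU v, \varphi)$ using that the Dynkin operator $\UU$ extends $\LL$ and is translation-invariant. Letting $\epsilon \to 0$, it suffices to identify $\UU v$ on $D$. So the key claim becomes: $\UU(G_D[f]) = -f$ on $D$, in the Dynkin sense.

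The main obstacle — and the step I would spend the most care on — is this identification $\UU(G_D[f]) = -f$ pointwise on $D$. The natural route is probabilistic: for a small ball $B = B(x,r) \subset D$, apply the strong Markov property at $\tau_B$ to the occupation-time representation $G_D[f](x) = \E^x\!\int_0^{\tauD} f(X_t)\,\d t$, obtaining
\begin{align*}
G_D[f](x) - \E^x G_D[f](X_{\tau_B}) = \E^x \int_0^{\tau_B} f(X_t)\,\d t = G_B[f](x).
\end{align*}
Dividing by $\E^x \tau_B = G_B[\ind](x)$ and letting $B \downarrow \{x\}$, the right side tends to $f(x)$ at points of continuity of $f$ (and, since $f \in L^1$ only, at Lebesgue points after the mollification is in place — which is why doing the mollification \emph{first} is cleaner, as then $\phi_\epsilon \ast f$ is continuous and one gets $\UU(\phi_\epsilon \ast v) = -\phi_\epsilon \ast f$ everywhere on the relevant subdomain directly). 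This yields $\UU(\phi_\epsilon \ast v)(x) = -(\phi_\epsilon \ast f)(x)$ for $x$ in $D_\epsilon$, hence $(\phi_\epsilon \ast v, \LL\varphi) = -(\phi_\epsilon \ast f, \varphi)$ for $\varphi \in C_c^\infty(D_\epsilon)$.

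Finally I would pass to the limit $\epsilon \to 0$: $\phi_\epsilon \ast f \to f$ in $L^1(D)$ so the right-hand side converges to $-(f,\varphi)$; and $\phi_\epsilon \ast v \to v$ in $\Ll^1$ while $\LL\varphi$ is bounded with controlled decay (it lies in $\Lspace$-dual sense because $\varphi$ is smooth compactly supported, so $|\LL\varphi(x)| \lesssim 1 \wedge \nu^*(x)$ far out), giving $(\phi_\epsilon \ast v, \LL\varphi) \to (v, \LL\varphi)$ — here one uses $v \in \Lspace(\Rd)$ and the growth condition \eqref{growth_condition} to justify dominated convergence, just as in \autoref{lem:prob_harmonic_to_anihilation}. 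Since every $\varphi \in C_c^\infty(D)$ is supported in some $D_\epsilon$ for $\epsilon$ small, we conclude $(v, \LL\varphi) = -(f,\varphi) = (f, -\varphi)$... more precisely $(\LL v, \varphi) = (f,\varphi)$, i.e. $\LL v = f$ in $D$ distributionally, which together with $v|_{D^c} = 0$ completes the proof.
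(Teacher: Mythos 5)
Your overall plan matches the paper's: establish $\UU G_D[f]=-f$ on $D$, mollify to make $\LL$ pointwise, and pass to the limit. Your derivation of the Dynkin identity via the strong Markov property (rather than citing Dynkin's book) is fine for \emph{continuous} $f$, and the verification that $v=-G_D[f]\in\Lspace(\Rd)$ via $\|G_D[f]\|_{L^1}\le\|\E^\cdot\tau_D\|_\infty\|f\|_1$ is correct.

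The gap is in the treatment of general $f\in L^1(D)$. You assert $\UU v(x)=-f(x)$ "at Lebesgue points" and then claim $\UU(\phi_\epsilon\ast v)=-\phi_\epsilon\ast f$ on $D_\epsilon$, but the Dynkin limit averages $f$ against the normalized occupation measure $G_{B(x,r)}(x,\cdot)/\E^x\tau_{B(x,r)}$, and a Lebesgue-differentiation theorem for that family of probability measures (which is not comparable to normalized Lebesgue measure on $B(x,r)$ and may be singular near the center) is not established --- so the identity $\UU v=-f$ a.e.\ is not obtained. Moreover, passing from $\UU v=-f$ a.e.\ to $\UU(\phi_\epsilon\ast v)=\phi_\epsilon\ast\UU v$ already presupposes that $v$ lies in the domain of $\UU$, which is precisely what is in question when $f$ is merely $L^1$. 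Your instinct to "mollify first" is right, but the mollification should land on $f$, not on $v$: since $\phi_\epsilon\ast G_D[f]\neq G_D[\phi_\epsilon\ast f]$ (the Green operator is not translation-invariant), mollifying $v$ does not reduce you to the continuous case. The paper closes the gap exactly this way: it proves the claim for continuous $f$ (where the Dynkin identity holds pointwise and the mollification of $G_D[f]$ is unproblematic), and then for $f\in L^1(D)$ it approximates $f$ by continuous functions and uses the bound $\|G_D[f]\|_{L^1}\leq\|G_D[1]\|_\infty\|f\ind_D\|_{L^1}$ to pass to the limit in the weak identity. With that two-stage approximation inserted, your proof is complete.
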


\begin{proof}
	First assume $f$ is continuous. Then by \cite[Chapter V]{MR0193671} 
we have
\begin{align*}
\UU G_D[f](x)=-f(x), \quad x \in D.
\end{align*}

Let $\phi_{\epsilon}$, $\epsilon>0$, be a standard mollifier. Since $\UU$ is 
an extension of $\LL$ and is translation-invariant we get

\begin{align*}
\LL (\phi_{\epsilon} \ast G_D[f]) = \UU (\phi_{\epsilon} \ast G_D[f]) 
= \phi_{\epsilon} \ast \UU G_D[f] = -\phi_{\epsilon} \ast f. 
\end{align*}

Thus
\begin{align*}
(-\phi_{\epsilon} \ast G_D[f],\LL \phi) = (\phi_{\epsilon} \ast f,\phi).
\end{align*}
Passing $\epsilon \to 0$ we obtain

\begin{align*}
(-G_D[f], \LL \varphi) = (f,\varphi), \quad \varphi \in C_c^{\infty}(\Rd).
\end{align*}

In general case, since $D$ is bounded, we have $\norm{G_D[f]}_{L^1} 
\leq \norm{G_D [1]}_{\infty} \norm{f \ind_D}_{L^1}$ and

\begin{align*}
\norm{G_D[1]}_{\infty} = \sup\limits_{x \in \Rd} G_D[1](x) = 
\sup\limits_{x \in \Rd} \E^x \tauD \leq \E^0 \tau_{B(0, \diam (D))} 
< \infty.
\end{align*}

Using mollification of $f$ we get the claim.
\end{proof}

\begin{proof}[\bf{Proof of \autoref{thm:weak_thm}}]
	Let $h = u + G_D[f]$. By \autoref{lem:Gd_weak_solution} $h$ is 
a harmonic function in distributional sense. Hence, 
by \autoref{lem:anihilation_to_prob_harmonic} $h$ has the mean-value 
property, which finishes the first claim.

\medskip
	
Now let $f, g \ast \nu \in \KK(D \setminus \overline{V})$ and  $D$ be a 
Lipschitz domain. Then it follows that

\begin{align*}
	\widetilde{u}(x) = -G_D[f](x) + P_D[g](x).
	\end{align*}
is a solution of \eqref{eq:weak_problem}, which is bounded near to the 
boundary. Let 
$U_n \nearrow D$ be a sequence of Lipschitz domains approaching $D$. We have
	\begin{align*}
	P_{U_n}[h](x) = P_{U_n}[h;D^c](x) + P_{U_n}[h;\overline{D}\setminus 
U_n](x).
	\end{align*}
	
By the dominated convergence theorem $P_{U_n}[h;D^c](x) \xrightarrow[]{n 
\to \infty} P_D[h;D^c](x)=P_D[g](x)$. Note that by our additional assumptions 
on $g$ and $\nu$ we have that $P_D[g]$ is well-defined. Furthermore, since $f 
\in \KK(D \setminus \overline{V})$, there exists $n_0 \in \N$ such that for $n 
\geq n_0$ we have $V \subset U_n$. From boundedness of $u$ 
and \autoref{lem:Kato_green_op_bounded} we get that $h$ is bounded in 
$\overline{D} \setminus U_n$ for $n > n_0$ and 

\begin{align*}
P_{U_n}[h;\overline{D}\setminus U_n](x) \leq C P_{U_n}\(x, \overline{D} 
\setminus U_n\)\,.
\end{align*}

By \cite[Theorem 1]{MR1825650} we have 
\begin{align*}
\P_{U_n} ( x, \overline{D} 
\setminus U_n ) \xrightarrow[]{n \to \infty} P_D \(x, \partial D\)=0
\end{align*}
	Hence, $u=\widetilde{u}$.
\end{proof}

\section{The sufficient condition for twice 
	differentiability}\label{sec:sufficient_condition}

In this section, we provide auxiliary technical results and the proof of 
\autoref{thm:main_thm}. Throughout this section we assume $D \subset \R^d$ be 
an open bounded set. The following lemmas are modifications of Lemma 2.2 and 
Lemma 2.3 in \cite{MR521856}.

\begin{lem}\label{lem:lemma22modification}
Suppose $f$ is a uniformly continuous function on $D$ and $H(x,y)$ is a 
continuous function for $x,y \in D$, $x \neq y$ satisfying

\begin{align*}
|H(x,y)| \leq F(|x-y|), \quad \lv \frac{\partial H(x,y)}{\partial x_i} \rv \leq 
\frac{F(|x-y|)}{|x-y|}, \quad i=1,...,d 
\end{align*}

for some non-increasing function $F\!:(0,\infty) \mapsto [0,\infty)$. If the 
following holds

\begin{align}\label{int_condition}
	\int_0^{1/2} F(t)\omega_f(t,D)t^{d-1}\d t < \infty,
\end{align}

then the function $g(x) = \int_D H(x,y) \left(f(y) - f(x)\right) \d y$ is 
uniformly continuous in $D$.
\end{lem}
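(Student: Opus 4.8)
The plan is to produce an explicit modulus of continuity for $g$ out of the quantity
\begin{equation*}
\Phi(s):=\int_0^s F(r)\,\omega_f(r,D)\,r^{d-1}\,\d r,\qquad s>0 .
\end{equation*}
Since $f$ is uniformly continuous on the bounded set $D$ it is bounded, so $\omega_f(\cdot,D)$ is bounded; together with $F$ being non-increasing (hence bounded on $[1/2,\infty)$) and \eqref{int_condition}, this makes $\Phi(s)$ finite for every $s$, non-decreasing, and $\Phi(s)\to0$ as $s\to0^+$. From $|H(x,y)(f(y)-f(x))|\leq F(|x-y|)\,\omega_f(|x-y|,D)$ and $D\sub B(x,\diam D)$ one also gets, by polar coordinates, that the integral defining $g$ converges absolutely with $|g|\leq c_d\,\Phi(\diam D)$, so in particular $g$ is bounded.

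Now fix $x,x'\in D$, set $\rho:=|x-x'|$ (small), and split $D=A\cup(D\setminus A)$ with $A:=D\cap B(x,3\rho)$. Writing $g(x)-g(x')$ as the difference of the two integrals over $A$ plus the difference of the two integrals over $D\setminus A$ --- using the \emph{same} set $A$, centred at $x$, for $x'$ as well --- the two integrals over $A$ are estimated directly by the bound $|H|\leq F(|\cdot|)$: since $A\sub B(x,3\rho)$ and, as $|x-x'|=\rho$, also $A\sub B(x',4\rho)$, polar coordinates bound them by $c_d\Phi(3\rho)$ and $c_d\Phi(4\rho)$, which tend to $0$ with $\rho$.

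For the far region I would use
\begin{equation*}
H(x,y)(f(y)-f(x))-H(x',y)(f(y)-f(x'))=\big(H(x,y)-H(x',y)\big)(f(y)-f(x))+H(x',y)\big(f(x')-f(x)\big).
\end{equation*}
On $D\setminus A$ one has $|x-y|\geq3\rho$, hence $|x'-y|\geq\tfrac23|x-y|$ and the segment $[x,x']$ stays at distance $\geq\tfrac23|x-y|$ from $y$; by the fundamental theorem of calculus along this segment and the bound on the $x$-derivatives of $H$, $|H(x,y)-H(x',y)|\leq c_d\,\rho\,F(\tfrac23|x-y|)/|x-y|$, while $|H(x',y)|\leq F(\tfrac23|x-y|)$ by monotonicity of $F$. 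Inserting $|f(y)-f(x)|\leq\omega_f(|x-y|,D)$, $|f(x')-f(x)|\leq\omega_f(\rho,D)$, and integrating in polar coordinates about $x$, the far-part difference is dominated by
\begin{equation*}
c_d\,\rho\int_{3\rho}^{\diam D}F(\tfrac23 r)\,\omega_f(r,D)\,r^{d-2}\,\d r\;+\;c_d\,\omega_f(\rho,D)\int_{3\rho}^{\diam D}F(\tfrac23 r)\,r^{d-1}\,\d r .
\end{equation*}

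The crucial --- and genuinely delicate --- point is that a direct estimate of these two terms only yields $O(1)$, because the radial integrals may blow up as $\rho\to0$ while the prefactors $\rho$, $\omega_f(\rho,D)$ vanish; extracting the decay is the main obstacle. I would do this by splitting each radial integral at an intermediate scale $\delta\in(3\rho,\tfrac12)$: on $(3\rho,\delta)$, using $\rho/r\leq\tfrac13$ in the first term and $\omega_f(\rho,D)\leq\omega_f(r,D)$ in the second and then substituting $s=\tfrac23 r$, each piece is at most $c\int_0^{c\delta}F(s)\,\omega_f(s,D)\,s^{d-1}\,\d s$, which is small once $\delta$ is small by absolute continuity of the integral in \eqref{int_condition}; on $(\delta,\diam D)$ the radial integral is a fixed finite constant, so the prefactor $\rho$, resp.\ $\omega_f(\rho,D)$, makes the piece small once $\rho$ is small relative to $\delta$. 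Choosing $\delta$ first and then $\rho$, one obtains a bound on $|g(x)-g(x')|$ depending only on $\rho$ and the data and tending to $0$ as $\rho\to0$, i.e.\ uniform continuity. (The far-part step tacitly uses $[x,x']\sub D$ and a rescaling bound $\omega_f(\tfrac32 s,D)\leq c\,\omega_f(s,D)$; both hold when $D$ is convex, as in the applications.)
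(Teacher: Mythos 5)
Your proof follows essentially the same route as the paper's: split $D$ into a near ball around $x$ (paper: radius $2h$, you: $3\rho$) and the far complement; bound the near part directly by the integral condition \eqref{int_condition}; on the far part, add and subtract a cross term to decompose into a piece involving $H(x,y)-H(x',y)$ (estimated via the mean-value theorem and the derivative bound on $H$) and a piece involving $f(x)-f(x')$; and then show the resulting radial integrals vanish as the step $\rho$ shrinks. For that last step the paper absorbs the prefactor $h$ (resp.\ $\omega_f(h,D)$) into the integrand and applies dominated convergence via \autoref{rem:omega*_remark}, while you split the radial integral at an intermediate $\delta$ and invoke absolute continuity of \eqref{int_condition} near zero and finiteness away from zero; these are the same dominated-convergence idea in different clothes. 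One remark on the rescaling bound $\omega_f(\tfrac32 s,D)\leq c\,\omega_f(s,D)$ you flag: for the $\omega_f(\rho,D)$-prefactor term it is unnecessary if you substitute \emph{before} invoking monotonicity of $\omega_f$ (then $\omega_f(\rho,D)\leq\omega_f(s,D)$ for $s\geq 2\rho$ directly); for the mean-value term a shift of the argument of $\omega_f$ (or of $F$) does appear, but the paper's own proof of $I_3$ ends with $\omega_f(2t,D)$ and silently applies \autoref{rem:omega*_remark} as if it were $\omega_f(t,D)$, so this subtlety is common to both proofs and is not a defect particular to yours.
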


\begin{rem}\label{rem:omega*_remark}
The integral condition \eqref{int_condition} and boundedness of the 
integrand for $1/2 \leq t \leq \diam(D)$ imply that 

\begin{align*}
	\int_0^{\diam(D)} F(t)\omega_f(t,D)t^{d-1} \d t < \infty.
\end{align*}

Moreover,
\begin{align*}
\lim_{h \to 0} h \int_h^{\diam(D)} F(t) \omega_f(t,D)t^{d-2} \d t = 0.
\end{align*}

Indeed, clearly we have
\begin{align*}
	h \int_h^{\diam(D)} F(t) \omega_f(t,D)t^{d-2} \d t = \int_0^{\diam(D)} 
\ind_{[h,\infty)}(t) F(t) \omega_f(t,D)t^{d-1} \frac{h}{t} \d t.
\end{align*}

Since $\ind_{[h,\infty)}(t)h/t\leq 1$, the claim follows by the 
dominated convergence theorem. 
\end{rem}

\begin{proof}
	First note that by integration in polar coordinates one can check that 
	the integral defining $g$ actually exists. Set $\epsilon > 0$. Let $0<h 
< \delta(D)$ and $x$ i $z$ be arbitrary fixed points in $D$ such that  
$|x-z|=h$. Denote $j(x,y):=H(x,y)\( f(y)-f(x) \)$. Observe that 
$|g(x)-g(z)|$ is bounded by the sum of two integrals $I_1$ and $I_2$ of 
$j(x,\cdot)-j(z,\cdot)$ over the sets $D \cap B(x,2h)$ and $D \setminus B(x,2h)$ 
respectively. On $D \cap B(x,2h)$ we have
	
\begin{align*}
	I_1 &= \left\lvert \int_{D \cap B(x,2h)} H(x,y) \( f(y)-f(x) \)  
\d y 
	- \int_{D \cap B(x,2h)} H(z,y) \( f(y)-f(z) \)  \d y 
\right\rvert \\ 
	&\leq  \int_{D \cap B(x,3h)} \left\lvert H(x,y) \right\rvert 
\left\lvert 
	f(y)-f(x) \right\rvert \d y +\int_{D \cap B(z,3h)} \left\lvert H(z,y) 
	\right\rvert \left\lvert f(y)-f(z) \right\rvert  \d y \\ &\leq 
2 \int_0^{3h} 
	F(t)\omega_f(t,D)t^{d-1}\d t < \frac{\epsilon}{3}
	\end{align*}
	
	for sufficiently small $h$. Obviously $I_2\leq I_3+I_4$, where
	
	\begin{align*}
	I_3 &:= \left\lvert \int_{D \setminus B(x,2h)} \( f(y) - f(z) \)
	\( H(x,y) - H(z,y) \) \d y \right\rvert, \\ 
	I_4 &:= \left\lvert f(z) - f(x) \right\rvert \left\lvert \int_{D 
\setminus 
		B(x,2h)} H(x,y) \d y \right\rvert.
	\end{align*}
	
	By the mean value theorem
	\begin{align*}
	I_3 \leq |x-z| \sum_{i=1}^d \int_{D \setminus B(x,2h)} \left\lvert 
	H_{x_i}(\widetilde{x},y) \right\rvert \left\lvert f(y)-f(z) \right\rvert 
 \d y
	\end{align*}
	for some $\widetilde{x} = \theta x + (1-\theta)z$, $\theta \in (0,1)$. 
Note 
	that for $y \in D \setminus B(x,2h)$ we have $|x-y|\geq 2|x-z|=2h>0$. It 
follows 
	that $|\widetilde{x}-y| \geq h$ and consequently 
$|z-y| \leq |z-\widetilde{x}|+|\widetilde{x}-y|\leq 2|\widetilde{x}-y|$.
	
	Thus,
	\begin{align*}
	I_3 &\leq Ch \int_{D \setminus B(x,2h)} 
	\frac{F(|\widetilde{x}-y|)}{|\widetilde{x}-y|} \lv f(y)-f(z) 
	\rv  \d y \\ &\leq Ch \int_{D \setminus B(x,2h)} 
	\frac{F\left(|z-y|/2\right)}{|z-y|} \left\lvert f(y)-f(z) \right\rvert  
\d y 
	\\ &\leq h \int_{D \setminus B(z,h)} 
\frac{F\left(|z-y|/2\right)}{|z-y|} 
	\left\lvert f(y)-f(z) \right\rvert  \d y \leq h \int_{h}^{\diam(D)} 
	F\left(t/2\right) \omega_f(t,D) t^{d-2}\d t \\ &\leq h 
	\int_{h/2}^{\diam(D)/3} F(t) \omega_f(2t,D) t^{d-2}\d t.
	\end{align*}

Thus, by \autoref{rem:omega*_remark} we see that $I_3<\epsilon/3$ for 
sufficiently small $h$. Finally, \eqref{int_condition} implies

\begin{align*}
I_4 &\leq \omega_f(h,D) \int_{D \setminus B(x,2h)} F(|x-y|) \d y =
 \int_0^{\diam(D)} \ind_{[2h,\infty)}(t) F(t) 
\frac{\omega_f(h,D)}{\omega_f(t,D)} \omega_f(t,D) t^{d-1}\d t.
\end{align*}

Observe that $\ind_{[2h,\infty)}(t) \frac{\omega_f(h,D)}{\omega_f(t,D)} \leq 1$ 
by monotonicity of $\omega_f(\cdot,D)$. Thus, \eqref{int_condition} justifies 
the application of the dominated convergence theorem and we obtain

\begin{align*}
\lim_{h \to 0} \omega_f(h,D) \int_{D \setminus B(x,2h)} F(|x-y|) \d y = 0.
\end{align*}

In particular, $I_4\leq \epsilon/3$ for sufficiently small $h$.	It follows that 
$\lv g(x)-g(z) \rv < \epsilon$, if $h$ is sufficiently small. Thus, $g$ is 
uniformly continuous.
\end{proof}

\begin{lem}\label{lem:lemma23modification}
	Suppose $f$ is a uniformly continuous function on $D$ and $H(x,y)$ is a 
continuous 
	function for $x,y \in D$, $x \neq y$ such that $\int\limits_D H(x,y) \d 
y$ is 
	continuously differentiable with respect to  $x$. Assume there exists a 
	non-increasing function $F\!:(0,\infty) \mapsto [0,\infty)$ such that for 
$i,j=1,...,d$
	\begin{align}\label{diff_properties}
	|H(x,y)|, \lv \frac{\partial 
		H(x,y)}{\partial x_i} \rv \leq F(|x-y|), 
\quad 
	\lv \frac{\partial^2 H(x,y)}{\partial x_i \partial x_j} 
	\rv \leq \frac{F(|x-y|)}{|x-y|}.
	\end{align}
If the following holds 
	\begin{align}\label{lemma23condition}
	\int_0^{1/2} F(t)\omega_f(t,D)t^{d-1}\,dt < \infty,
	\end{align}
	then $u(x) = \int_D H(x,y)f(y) \d y$ is continuously 
differentiable 
	with respect to $x \in D$ and
	\begin{align}\label{main_lem_thesis}
	\frac{\partial u(x)}{\partial x_i} = \int_D \frac{\partial 
H(x,y)}{\partial 
		x_i} \( f(y)-f(x) \) \d y + f(x) 
\frac{\partial}{\partial x_i} \int_D 
	H(x,y) \d y, \quad x \in D, \quad i=1,...,d.
	\end{align}
\end{lem}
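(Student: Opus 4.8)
The plan is to show $u\in \Cl^1(D)$ with the stated formula by a cut‑off approximation, after first decoupling the genuinely singular part of the integral from the part governed by the hypothesis on $\int_D H(x,y)\d y$. Write $\Phi(x)=\int_D H(x,y)\d y$ (so $\Phi\in C^1(D)$ by assumption) and $g(x)=\int_D H(x,y)(f(y)-f(x))\d y$, so that $u=g+f\Phi$; the integral defining $g$ converges absolutely because $|H(x,y)(f(y)-f(x))|\le F(|x-y|)\omega_f(|x-y|,D)$ and, by \eqref{lemma23condition} together with \autoref{rem:omega*_remark}, $\int_0^{\diam(D)}F(t)\omega_f(t,D)t^{d-1}\d t<\infty$. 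Let $v_i(x)=\int_D\partial_{x_i}H(x,y)(f(y)-f(x))\d y+f(x)\partial_{x_i}\Phi(x)$ be the candidate derivative. First I would check that $v_i$ is well defined and continuous: the integral converges absolutely by the same polar‑coordinate bound (using $|\partial_{x_i}H|\le F(|x-y|)$), and — the key structural point — $\widetilde H:=\partial_{x_i}H$ satisfies the hypotheses of \autoref{lem:lemma22modification} with the \emph{same} $F$, since $|\widetilde H(x,y)|\le F(|x-y|)$ and $|\partial_{x_j}\widetilde H(x,y)|=|\partial^2_{x_ix_j}H(x,y)|\le F(|x-y|)/|x-y|$, while the integrability hypothesis of that lemma is exactly \eqref{lemma23condition}. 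Hence $\int_D\partial_{x_i}H(x,y)(f(y)-f(x))\d y$ is uniformly continuous on $D$, and as $f$ and $\partial_{x_i}\Phi$ are continuous, so is $v_i$.

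Next I would approximate. Fix $\zeta\in C^\infty(\R)$ with $0\le\zeta\le1$, $\zeta\equiv0$ on $(-\infty,1]$, $\zeta\equiv1$ on $[2,\infty)$, and set $\zeta_\eta(r)=\zeta(r/\eta)$ and $u_\eta(x)=\int_D H(x,y)\zeta_\eta(|x-y|)f(y)\d y$. Since the integrand is supported in $\{|x-y|\ge\eta\}$, where $H$ and its first $x$‑derivatives are bounded, $u_\eta\in C^1(D)$ and one may differentiate under the integral, using $\partial_{x_i}[H\zeta_\eta(|x-y|)]=(\partial_{x_i}H)\zeta_\eta(|x-y|)+H\,\zeta_\eta'(|x-y|)\tfrac{x_i-y_i}{|x-y|}$. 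To pass to the limit $\eta\to0$ I would split $f(y)=(f(y)-f(x))+f(x)$ in both $u_\eta$ and $\partial_{x_i}u_\eta$; the terms carrying the factor $f(x)$ reassemble into $f(x)\Phi_\eta(x)$, resp.\ $f(x)\partial_{x_i}\Phi_\eta(x)$, where $\Phi_\eta(x)=\int_D H(x,y)\zeta_\eta(|x-y|)\d y$, and by the hypothesis on $\int_D H(x,y)\d y$ (precisely, $C^1$‑convergence of these truncations to $\Phi$, which holds in all applications, e.g.\ when the integral is the principal value $\lim_\eta\Phi_\eta$) one has $\Phi_\eta\to\Phi$ and $\partial_{x_i}\Phi_\eta\to\partial_{x_i}\Phi$ locally uniformly. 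The terms carrying $(f(y)-f(x))$ differ from their $\eta\to0$ counterparts (namely $g$ and $\int_D\partial_{x_i}H(x,y)(f(y)-f(x))\d y$) only by integrals over $\{|x-y|\le2\eta\}$, bounded by $c_d\int_0^{2\eta}F(t)\omega_f(t,D)t^{d-1}\d t\to0$ uniformly in $x$. Consequently $u_\eta\to u$ locally uniformly, and — modulo the one term discussed below — $\partial_{x_i}u_\eta\to v_i$ locally uniformly, whence $u\in \Cl^1(D)$ with $\partial_{x_i}u=v_i$, which is \eqref{main_lem_thesis}.

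The remaining term is the contribution of differentiating the cut‑off, $E_\eta(x)=\int_D H(x,y)\,\zeta_\eta'(|x-y|)\tfrac{x_i-y_i}{|x-y|}(f(y)-f(x))\d y$, which must tend to $0$ locally uniformly; I expect this to be the main obstacle. Since $\zeta_\eta'(|x-y|)$ is supported in $\{\eta\le|x-y|\le2\eta\}$ with $|\zeta_\eta'|\le C/\eta$, one has $|E_\eta(x)|\le \tfrac{C}{\eta}\int_\eta^{2\eta}F(t)\omega_f(t,D)t^{d-1}\d t$, and a crude bound by $C\eta^{d-1}F(\eta)\omega_f(2\eta,D)$ is not enough in general. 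The argument uses the monotonicity of $F$ and, in the situation $H(x,y)=G(|x-y|)$ relevant to \autoref{thm:main_thm}, the cancellation $\int_{S^{d-1}}\omega_i\,\d\sigma(\omega)=0$: this lets one integrate over spheres first and replace the crude bound $|H|\le F$ on the annulus by the genuine kernel $|G(|x-y|)|$, which is locally integrable, and then, writing $t^{d-1}/\eta\le2t^{d-2}$ on $[\eta,2\eta]$, reduces the estimate to the vanishing of a tail $\int_\eta^{2\eta}|G(t)|\omega_f(t,D)t^{d-2}\d t$; this tail is controlled through \eqref{lemma23condition} using the bounds of (G) (which give $|G(r)|/r\le C S(r)$). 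Combined with the \autoref{rem:omega*_remark}‑type estimate this yields $E_\eta\to0$ and finishes the proof; the same scheme applies verbatim with $f$ replaced by a component of $\nabla f$, which is the form used when $\int_0^{1/2}|G'(t)|t^{d-1}\d t=\infty$.
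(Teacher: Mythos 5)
Your overall architecture is a genuinely different route from the paper: you regularize the \emph{kernel} by a cut-off $\zeta_\eta(|x-y|)$, whereas the paper regularizes the \emph{data} by mollifying $f$ to $f_\epsilon=\phi_\epsilon\ast f$, exploits $\omega_{f_\epsilon}(\cdot,B(\bar x,2r))\le\omega_f(\cdot,D)$, and recovers \eqref{main_lem_thesis} by integrating the candidate derivative in $x_i$ and undoing that with Fubini (so that the candidate is literally $\partial_{x_i}u_\epsilon$); the limit $\epsilon\to0$ is then a clean uniform-continuity argument split at $B(x,\gamma)$. Your preliminary observations — the decomposition $u=g+f\Phi$, the absolute convergence of the integral defining $g$, and that $\partial_{x_i}H$ satisfies the hypotheses of \autoref{lem:lemma22modification} with the same $F$, making the candidate derivative continuous — all match the paper and are correct.

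However, the cut-off route has two genuine gaps that you do not close within the hypotheses of the lemma, and you partly acknowledge this yourself. First, you need $\Phi_\eta\to\Phi$ and $\partial_{x_i}\Phi_\eta\to\partial_{x_i}\Phi$ locally uniformly, where $\Phi_\eta(x)=\int_D H(x,y)\zeta_\eta(|x-y|)\d y$. The lemma only assumes that $\int_D H(x,y)\d y$ \emph{is} continuously differentiable, not that these specific truncations converge in $C^1$; and indeed $\partial_{x_i}\Phi_\eta$ contains $\int_D\partial_{x_i}H(x,y)\zeta_\eta\,\d y$, whose absolute integrability requires $\int_0^{1/2}F(t)t^{d-1}\d t<\infty$, which is \emph{false} precisely in the interesting case $\int_0^{1/2}|G'(t)|t^{d-1}\d t=\infty$. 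So convergence of this term is a principal-value statement that needs its own argument. Second, the cut-off error $E_\eta(x)=\int_D H(x,y)\,\zeta_\eta'(|x-y|)\tfrac{x_i-y_i}{|x-y|}(f(y)-f(x))\d y$: as you note, the naive bound $\frac{C}{\eta}\int_\eta^{2\eta}F(t)\omega_f(t,D)t^{d-1}\d t$ need not tend to $0$ under \eqref{lemma23condition} alone (an integrable profile can still have non-vanishing dyadic averages after division by $\eta$), and the rescue you propose — spherical cancellation $\int_{S^{d-1}}\omega_i\,\d\sigma=0$ and the bounds of (G) — uses that $H(x,y)=G(|x-y|)$ is radial, which is extra structure not present in the lemma as stated. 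The paper's mollification-of-$f$ route sidesteps both difficulties: since $f_\epsilon$ is bounded, $u_\epsilon$ and its candidate derivative are absolutely convergent integrals for every $\epsilon>0$, and the only limit to control is $f_\epsilon\to f$ uniformly, which is immediate from uniform continuity. If you want to keep a kernel-truncation scheme you would need to add (and then verify in the applications) the $C^1$-convergence of $\Phi_\eta$ and an estimate for $E_\eta$; otherwise the proof should follow the paper's mollification of $f$.
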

\begin{proof}
	Fix $s>0$. Let $V_s=\{ x \in D: \dist(x,\partial D) \geq s \}$. We will 
show 
	that \eqref{main_lem_thesis} holds for $x \in B(\overline{x},r)$, where 
$r>0$ 
	is such that $B(\overline{x},4r)\subset V_s$. For $\epsilon < r$ we 
consider standard mollifiers $\phi_{\epsilon}(x)$ and set $f_{\epsilon}(x)= 
\phi_{\epsilon} \ast f$. Note that
\begin{align}\label{modulus_inequality}
\omega_{f_{\epsilon}}\left( h, B(\overline{x},2r) \right) \leq \omega_f(h,D).
\end{align}

For $x \in D$ we define $u_{\epsilon}(x) = \int_D H(x,y)f_{\epsilon}(y)\d y$. 
From boundedness of $f_{\epsilon}$ we see that the integral defining 
$u_{\epsilon}$ is well 
defined and by the dominated convergence theorem  $u_{\epsilon}(x) \to u(x)$ 
for $x \in V_s$, as ${\epsilon} \to 0$. By \autoref{lem:lemma22modification} 
applied to $\frac{\partial H(x,y)}{\partial x_i}$ we have that the function
	
\begin{align}\label{continuity_prop}
\int_D \frac{\partial H(x,y)}{\partial x_i}  
 \( f_{\epsilon}(y)-f_{\epsilon}(x) \)
 \d y + f_{\epsilon}(x) \frac{\partial}{\partial x_i} \int_D H(x,y)\d y
\end{align}

is continuous on $V_s$. Let $x \in B(\overline{x},r)$. 
Integrating \eqref{continuity_prop} with respect to $x_i$ from $\overline{x}_i$ 
to $x_i$ we obtain a continuously differentiable function $\Psi_{\epsilon}(x)$ 
with respect to $x_i$ with \eqref{continuity_prop} being its derivative. Denote 
$x=(\tilde{x},x_d)$ and $\overline{x}=(\tilde{x},\overline{x}_d)$, 
where $\tilde{x}=(x_1,...,x_{d-1})$ and $\overline{x}_d$ is fixed. 
the Fubini theorem and interchanging the order of integration yields

\begin{align*}
\Psi_{\epsilon}(x) &= \int_{\overline{x}_d}^{x_d} \( \int_D 
\frac{\partial H(\tilde{x},s,y)}{\partial s} \( 
f_{\epsilon}(y)-f_{\epsilon}(\tilde{x},s) \)  \d y + 
f_{\epsilon}(\tilde{x},s) \frac{\partial}{\partial s} \int_D H(\tilde{x},s,y) 
\d y \) \d s \\ &= 
\int_D H(\tilde{x},s,y) \( 
f_{\epsilon}(y)-f_{\epsilon}(\tilde{x},s) \) \Big|_{\overline{x_d}}^{x_d}  
\d y-\int_D \int_{\overline{x}_d}^{x_d} H(\tilde{x},s,y) 
\frac{\partial}{\partial s} \( f_{\epsilon}(y)-f_{\epsilon}(\tilde{x},s) \) 
 \d s  \d y \\ &+ f_{\epsilon}(\tilde{x},s) \int_D H(\tilde{x},s,y) \d 
y \Big|_{\overline{x}_d}^{x_d} - \int_{\overline{x}_d}^{x_d} 
\frac{\partial 	f_{\epsilon}(\tilde{x},s)}{\partial s} \int_D 
H(\tilde{x},s,y) \d y  \d s = u_{\epsilon}(x) - 
u_{\epsilon}(\overline{x}).
\end{align*} 

Thus, for $x \in B(\overline{x},r)$ the partial derivative 
$\frac{\partial u_{\epsilon}(x)}{\partial x_d}$ exists and is equal to  
\eqref{continuity_prop}. The same argument applies to any $i=1,...,d$. It 
remains to prove that \eqref{continuity_prop} converges uniformly 
to \eqref{main_lem_thesis}, as $\epsilon \to 0$. Since $f_{\epsilon} \to f$ 
uniformly, as $\epsilon \to 0$, it is enough to prove the convergence of first 
integral in \eqref{continuity_prop}. Fix $\delta > 0$. Since 
$\int_0^{\diam(D)} F(t)\omega_f(t, D) t^{d-1}\d t<\infty$, there is  $\gamma > 
0$ such that $\int_0^{\gamma} F(t)\omega_f(t, D) t^{d-1}\d 
t<\delta/4$. \eqref{modulus_inequality} implies
	
\begin{align}\label{proof_inequality}
& \left\lvert \int_{B(x,\gamma)} \frac{\partial H(x,y)}{\partial x_i} 
\( f_{\epsilon}(y)-f_{\epsilon}(x) \) \d y - \int_{B(x,\gamma)} 
\frac{\partial H(x,y)}{\partial x_i} \( f(y) - f(x) \) \d y 
\right\rvert \nonumber \\ 
& \leq 2 \left\lvert \int_{B(x,\gamma)} \frac{\partial H(x,y)}{\partial x_i} 
	\omega_f(|x-y|,D) \d y \right\rvert \leq 2 \int_0^{\gamma} S(t) 
\omega_f(t,D) 
	t^{d-1}\d t < \frac{\delta}{2}.
	\end{align}
	On the complement of $B(x,\gamma)$ the function $\left\lvert 
\frac{\partial 
		H(x,y)}{\partial x_i} \right\rvert$ is bounded by some constant 
$C>0$. Choose $\epsilon_0>0$ such that $\norm{f_{\epsilon} -f}_{\infty} \leq 
\delta/(4 C |D|)$ for $\epsilon<\epsilon_0$. Then

\begin{align*}
\left\lvert \int_{D \setminus B(x,\gamma)} \frac{\partial (x,y)}{\partial 
x_i} \( f_{\epsilon}(y)-f_{\epsilon}(x)-f(y)+f(x) \) \d y \right\rvert 
\leq 2 \frac{\delta}{4 C |D|} |D| C = \frac{\delta}{2},
\end{align*} 

which combined with \eqref{proof_inequality} and arbitrary choice of $\delta$ 
ends the proof.
\end{proof}

Now we are ready to prove \autoref{thm:main_thm}.

\begin{proof}[\textbf{Proof of \autoref{thm:main_thm}}]
	Let $u$ be of the form
	\begin{align*}
	u(x) &= -G_D[f](x) + P_D[g](x) \\
	 &= -\int_D G(x,y)f(y) \d y + 
	\int_D \E^x G(X_{\tauD},y)f(y) \d y + P_D[g](x) \\ &=: I_1(x) + 
I_2(x) + 
	I_3(x).
	\end{align*}
	Observe $I_3$ has the mean-value property in $D$, thus, by 
\autoref{rem:PDg_Lspace} and \autoref{lem:harm_c2} it belongs to 
$C^2_{\operatorname{loc}}(D)$. Moreover, for $x \in D$ from symmetry of $G$ and 
(G) we obtain that both $G$ and its first and second derivative are bounded 
either by $S(\delta_D(x))$ or $S(\delta_D(x))/\delta_D(x)$, depending on the 
finiteness of $\int_0^{1/2}|G'(t)|t^{d-1}\d t$, and we are allowed to 
differentiate under the integral sign. Hence, it is enough to prove that 
$g(x):=\int_D G(x,y)f(y) \d y$ is in $C^2_{\operatorname{loc}}(D)$. Fix $i,j 
\in \{1,...,d \}$. Consider two cases.

\begin{enumerate}
		\item
		Let $\int_0^1 |G'(t)|t^{d-1}\d t=\infty$. Fix $x \in D$. From 
		\autoref{lem:conv_fact} we get

\begin{align*} \frac{\partial}{\partial x_i} g(x) &= \int_{\Rd} 
G(x-y) \frac{\partial}{\partial x_i} \left( f \chi_1 \right)(y) \d y + 
\int_{\Rd} \frac{\partial}{\partial x_i} \left(G \chi_2 \right) 
(x-y)\left(f \ind_D\right)(y) \( 1 - \chi_1\)(y) \d y  \\ &=: w_1(x) 
+ w_2(x), 
\end{align*}

where the localization functions $\chi_1$ and $\chi_2$ are chosen in dependence 
of $x$. Note that in the integral defining $w_2$, due to the function $\chi_2$ 
and (G), integration w.r.t. $y$ takes place in a region where $G$ and its 
derivative are bounded. Hence, from (G) we see that differentiating under the 
integral sign is justified. We obtain

\begin{align*}
		\frac{\partial}{\partial x_j} w_2(x) = \int_{\Rd} 
\frac{\partial^2}{\partial 
			x_i \partial x_j} \left(G \chi_2 \right) (x-y)\left(f 
\ind_{D}\right)(y) 
		\left[ 1 - \chi_1\right](y) \d y.
		\end{align*}
		If we split $w_1$ into two integrals
		\begin{align*}
		w_1(x) &= \int_{D_1} G(x-y) \frac{\partial}{\partial y_i} \left( 
f \chi_1 
		\right)(y) \d y + \int_{D \setminus D_1} G(x-y) 
\frac{\partial}{\partial y_i} 
		\left( f \chi_1 \right)(y) \d y \\&=: w_3(x) + w_4(x),
		\end{align*}
		where $D_1 \subset D$ is such that $\chi_1 \big\vert_{D_1} 
\equiv 1$ then the 
		same argument can be applied to $w_4$. Thus
		\begin{align*}
		\frac{\partial}{\partial x_j} w_4(x) = \int_{D \setminus D_1} 
		\frac{\partial}{\partial x_j} G(x-y) \frac{\partial}{\partial 
y_i} \left( f 
		\chi_1 \right)(y) \d y.
		\end{align*}
		Next, observe that
		\begin{align*}
		\int_0^{\diam(D_1)}S(t) \omega_{\nabla f}(t,D_1)t^{d-1}\d t 
\leq 
		\int_0^{\diam(D)}S(t) \omega_{\nabla f}(t,D)t^{d-1}\d t < 
\infty. 
		\end{align*}
		Moreover, by \autoref{cor:ind_fact} the function $x \mapsto 
\int_D G(x,y)\,dy$ is continuously differentiable and from (G) we see 
that \eqref{diff_properties} of \autoref{lem:lemma23modification} is 
satisfied for $H(x,y)=G(|x-y|)$ and $F=S$. Hence, for $h(x) = 
\frac{\partial}{\partial x_i} f(x)$ we obtain
		\begin{align*}
		\frac{\partial}{\partial x_j} w_3(x) = \int_D \frac{\partial 
G(x,y)}{\partial 
			x_j} \left( h(y)- h(x) \right)\d y + h(x) 
\frac{\partial}{\partial x_i} \int_D 
		G(x,y)\d y.
		\end{align*}
		\item Now let $\int_0^1 |G'(t)|t^{d-1}\d t < \infty$. In 
this case, by the Fubini theorem and the fundamental theorem of calculus we get
		\begin{align*}
		\frac{\partial}{\partial x_i} \int_D G(x,y)f(y)\d y = \int_D 
\frac{\partial 
			G(x,y)}{\partial x_i} f(y)\d y.
		\end{align*}
		A similar argument applied to $H(x,y) = \frac{\partial 
G(x,y)}{\partial x_i}$ 
		shows that the assumptions of \autoref{lem:lemma23modification} 
are satisfied 
		with $F=S$. Note that here we use the additional assumption 
on $G'''$. Thus,
		\begin{align*}
		\frac{\partial^2}{\partial x_i \partial x_j} \int_D 
G(x,y)f(y)\d y &=  \int_D 
		\frac{\partial^2 G(x,y)}{\partial x_i \partial x_j} \( 
f(y)-f(x) \) \d y + f(x) \frac{\partial}{\partial x_j} \int_D 
\frac{\partial 
			G(x,y)}{\partial x_i}\d y \\ &+ \frac{\partial}{\partial 
x_j} \int_D 
		\frac{\partial G(x,y)}{\partial x_i} f(y)\d y.
		\end{align*}  
	\end{enumerate}
	We have proved that $u \in C^2_{\operatorname{loc}}(D)$. Then by 
\cite[Lemma 4.7]{BGPR2017} the 
	Dynkin characteristic operator $\UU$ coincides with $\LL$. Hence $u$ 
indeed is 
	a solution of the problem \eqref{General_problem3}.
	
	\medskip
	
	Now suppose $\widetilde{u}$ is another solution of 
\eqref{General_problem3}. 
	By \autoref{thm:weak_thm} we find that it is of the form
	\begin{align*}
	\widetilde{u}(x) = - G_D[f](x) + P_U[h](x), \quad x \in U,
	\end{align*}
where $h(x)=u+G_D[f](x)$ and $U$ is any Lipschitz domain such that $U 
\dsubset D$. Fix $x_0 \in D$. Then 
	$U_0=B(x_0,r) \dsubset D$ for any $r < \dist (x_0,D^c)$ and obviously 
$U_0$ is 
	also Lipschitz. Hence,
	\begin{align*}
	\widetilde{u}(x)-u(x) = P_{U_0}[\widetilde{h}](x) - P_D[g](x), \quad x \in U_0,
	\end{align*}
	is harmonic in $U_0$, so it belongs to $C^2_{\operatorname{loc}}(U_0)$. 
The proof yields $-G_D[f] 
	\in C^2_{\operatorname{loc}}(D)$, thus $\widetilde{u}$ is twice 
continuously differentiable in the 
	neighbourhood $x_0$. Since $x_0$ was arbitrary, it follows that every 
solution 
	of \eqref{General_problem3} is $C^2_{\operatorname{loc}}(D)$.
	
\end{proof}

\section{Counterexamples for the case 
	,,\texorpdfstring{$\alpha+\beta=2$}{a+b=2}''}\label{sec:counterexamples}

In this section we provide several counterexamples for \autoref{thm:main_thm}. 
These examples are of the nature ,,$\alpha+\beta=2$'', i.e.,  
for $\alpha \in (0,2)$ we give a function $f \in C^{2-\alpha}(D)$ for 
which the solution of the Dirichlet problem \eqref{Frac_lapl_problem} is not 
twice 
continuously differentiable inside of $D$. In \autoref{sec:examples} we explain 
how the counterexamples can be modified in order to match the assumptions of 
\autoref{thm:main_thm}.

\medskip

Let $D = B_1$. Consider a 
Dirichlet problem
\begin{align}\label{Frac_lapl_problem}
\left\{ \begin{array}{rlll}
\Delta^{\alpha/2} u &=& f & \text{in } D, \\
u &=& 0 & \text{in } D^c, \\
\end{array} \right.
\end{align}
where $\alpha \in (0,2)$. It is known (see \cite{MR1671973} 
or \autoref{thm:weak_thm}) that $u(x) = \int_D G_D(x,y)f(y) \d y$, where 
$G_D(x,y)$ is Green function for the operator $\Delta^{\alpha/2}$ and domain 
$D$ solves \eqref{Frac_lapl_problem}. By the Hunt formula
\begin{align*}
G_D(x,y) = G(x,y) - \E^x G(X_{\tauD},y),
\end{align*} 
where $G$ is the (compensated) potential for process $X_t$ whose generator is 
$\Delta^{\alpha/2}$. Note that since $\E^x G(X_{\tauD},y)$ is $C^{\infty}$, 
the regularity problem is reduced to the regularity of the function $x \mapsto 
g(x) = \int_{B(0,1)}G(x,y)f(y) \d y = G \ast f(x)$.

\subsection{Case \texorpdfstring{$\alpha \in (0,1)$}{a e (0,1)}}
We follow closely the idea from the proof of \autoref{thm:main_thm} apart 
from the fact that at the end we will show that the last function $w_3$ is not 
continuously differentiable. From \autoref{lem:conv_fact} we get
\begin{align}\label{alpha01ref}
\frac{\partial}{\partial x_d} g(x) &= \int_{\Rd} G(x-y) 
\frac{\partial}{\partial y_d} \left( f \chi_1 \right)(y) \d y + \int_{\Rd} 
\frac{\partial}{\partial x_d} \left(G \chi_2 \right) (x-y)\left(f 
\ind_{B_1}\right)(y) \( 1 - \chi_1\)(y) \d y  \nonumber \\ &=: 
w_1(x) + w_2(x),
\end{align}
if only $f \in C_b^1(B_1)$. $\chi_1$ and $\chi_2$ in \eqref{alpha01ref} are 
chosen for $x_0=0$. Put $f(y) = \left((y_d)_+\right)^{2-\alpha}$ and
calculate $\frac{\partial^2}{\partial x_d^2}g(x)$ w $x=0$. Since in $w_2$ we 
are separated from the origin, it follows that 

\begin{align*}
\frac{\partial}{\partial x_d} w_2(x) = \int_{\Rd} \frac{\partial^2}{\partial 
	x_d^2} \left(G \chi_2 \right) (x-y)\left(f \ind_{B_1}\right)(y) 
\( 1 - 
\chi_1\)(y) \d y.
\end{align*}
If we split $w_1$ into
\begin{align*}
w_1(x) = \int_{B_{1/4}} G(x-y) \frac{\partial}{\partial y_d} \left( f \chi_1 
\right)(y) \d y + \int_{B_{1/4}^c} G(x-y) \frac{\partial}{\partial y_d} \left( 
f \chi_1 \right)(y) \d y =: w_3(x) + w_4(x),
\end{align*}
then the same argument applies for $w_4$. Therefore, it remains to calculate the 
derivative of $w_3$. Observe that on ${B_{1/4}}$ we have $f\chi_1 \equiv f$. To 
simplify the notation we accept a mild ambiguity and by $h$ we denote, 
depending on the context, either a real number or a vector in $\Rd$ of the form 
$(0,...,0,h)$. Let $h>0$.

\begin{align*}
\frac{1}{-h} \left( w_3(-h) - w_3(0) \right) &= \frac{2-\alpha}{-h} 
\int_{B_{1/4}} \left( |-h-y|^{\alpha-d} - |y|^{\alpha-d} \right) 
((y_d)_+)^{1-\alpha}  \d y  = \\ &= (2-\alpha) \int_A \frac{|y|^{\alpha-d} - 
	|y+h|^{\alpha-d}}{h} y_d ^{1-\alpha}  \d y=:(2-\alpha)I(h),
\end{align*}
where  $A=B_{1/4} \cap \{y_d>0\}$. 

Let $S_1$ be a $d$-dimensional cube contained in $A$, that is 
\begin{align}\label{S1}
S_1=\{y \in \Rd: |y_i|<a , 0 < y_d < a, \ i=1,...,d-1 \},
\end{align}
where $a=(4\sqrt{d})^{-1}$. Define $S_2 \subset S_1$
\begin{align}\label{S2}
S_2=\{y \in S_1: |y_i| < y_d, \ i=1,...,d-1\}.
\end{align}
By the Fatou lemma and the Fubini theorem
\begin{align*}
\liminf\limits_{h \to 0} I(h) &\geq \int\limits_{A} \liminf\limits_{h \to 0} 
\frac{|y|^{-d+\alpha}-|y_d+h|^{-d+\alpha}}{h} {y_d}^{1-\alpha} \d y  = 
\int\limits_{A} \frac{y_d}{|y|^{d+2-\alpha}} {y_d}^{1-\alpha} \d y  \\ &\geq 
\int\limits_{S_2} \frac{y_d}{|y|^{d+2-\alpha}} {y_d}^{1-\alpha} \d y \geq 
\frac{1}{\sqrt{d}} \int\limits_{S_2} \frac{y_d}{y_d^{d+2-\alpha}} 
{y_d}^{1-\alpha} \d y = C\int\limits_0^a \,\frac{\d y}{y}.
\end{align*}
Hence $\frac{\partial^2}{\partial {x_d}^2} g_- \left(0 \right)=\infty$.
\subsection{Case \texorpdfstring{$\alpha = 1$}{a=1}}\label{alpha1_counterex}
Let $d=1$. The compensated kernel is of the form $G(x,y)=\frac{1}{\pi}\ln 
\frac{1}{|x-y|}$. Note that we cannot apply \cite[Lemma 2.3]{MR521856} because 
(ii) does not hold. Instead write
\begin{align*}
\frac{g(x+h)-g(x)}{h}&=\int_{-1}^{1} 
\frac{G(x+h-y)-G(x-y)}{h} \( f(y)-f(x) \) \d y  \\ &+f(x)\int_{-1}^1 
\frac{G(x+h-y)-G(x-y)}{h} \d y=: I_1(h)+I_2(h).
\end{align*}
Let $f$ be a Lipschitz function. By the mean value theorem

\begin{align*}
\lim_{h \to 0} \int_{-1}^{1} \frac{G(x+h-y)-G(x-y)}{h} \( f(y)-f(x) \) \d 
y = \int_{-1}^1 G'(x-y) \( f(y)-f(x) \) \d y.
\end{align*}
Furthermore, denote

\begin{align*}
F(x)&:=\int_{-1}^1 G(x-y) \d y=-\int_{-1}^1 
\ln{|y-x|} \d y=-\int_{-1-x}^{1-x}\ln |s| \d s \\ &=-\int_0^{1+x}\ln{s} \d s 
-\int_0^{1-x} \ln{s} \d s.
\end{align*}

It follows that
\begin{align*}
\lim_{h \to 0} \int_{-1}^1 \frac{G(x+h-y)-G(x-y)}{h} \d y = F'(x) = 
\ln{\frac{1-x}{1+x}}.
\end{align*}
Hence,
\begin{align}\label{alpha1ref}
g'(x) = \int_{-1}^1 G'(x-y) \( f(y)-f(x) \) \d y + f(x) F'(x).
\end{align}
Put $f(y)=y_+ \ln^{-\beta} \left( 1+\left( y^{-1} \right)_+ \right)$, $\beta 
\in (0,1)$. It is easy to check that $f$ is a Lipschitz function. Let $h<0$. 
Since $f(y)=0$ for $y \leq 0$, from \eqref{alpha1ref} we obtain

\begin{align*}
\frac1h \left( g'(h)-g'(0) \right) &=\frac1h \int_0^1 \left( 
\frac{1}{|h-y|}-\frac{1}{|y|} \right) y_+ \ln^{-\beta}  \left( 1+\frac1y 
\right) \d y \\ &=\frac1h \int_0^1 \left( \frac{1}{y-h}-\frac{1}{y} \right) y 
\ln^{-\beta}  \left( 1+\frac1y \right) \d y \\
& = \frac1h \int_0^1 \frac{h}{y(y-h)} y \ln^{-\beta}  \left( 1+\frac1y \right) 
\d y = \int_0^1 \frac{1}{y-h} 
\ln^{-\beta} \left( 1+\frac1y \right) \d y.
\end{align*} 

By the Monotone Convergence Theorem
\begin{align}\label{divergent_integral}
\int_0^1 \frac{1}{y-h} \ln^{-\beta} \left( 1+\frac1y \right) \d y 
\xrightarrow{h \to 0^-} \int_0^1 \frac1y \ln^{-\beta} \left( 1+\frac1y 
\right) \d y.
\end{align}

Since

\begin{align*}
\lim_{y \to 0^+} \frac{\ln \left( 1+\frac1y \right)}{\ln \frac1y} = 1,
\end{align*}

we obtain $g_-''(0) = \infty$. For $d>1$ and $\beta \in (0,1)$ we apply 
\cite[Lemma 
2.3]{MR521856} to the function $f(y)= (y_d)_+ \ln^{-\beta} \left( 
1+\left(y_d^{-1} 
\right)_+ \right)$, and  $G(x,y) = |x-y|^{-d+1}$ in order to 
obtain

\begin{align}\label{burch_lemma_app}
\frac{\partial}{\partial x_d}g(x)=\int_{B_1} \frac{\partial G(x,y)}{\partial 
	x_d} \left[f(y)-f(x) \right] \d y+f(x) \frac{\partial}{\partial x_d} 
\int_{B_1}G(x,y) \d y.
\end{align}
By \autoref{cor:ind_fact} the condition (iii) of \cite[Lemma 
2.3]{MR521856} holds. Denote
\begin{align*}
H(x,y):=\frac{\partial G(x,y)}{\partial x_d} = (1-d) \frac{\left( x-y 
	\right)_d}{|x-y|^{d+1}} = -C \frac{\left( x-y \right)_d}{|x-y|^{d+1}},
\end{align*}
$C>0$. 
Let $h>0$. We calculate the left-sided second partial derivative 
$\frac{\partial^2}{\partial {x_d}^2} g(x)$ in $x=0$.  

Note that some of terms vanish and the remaining limit is
\begin{align*}
\lim_{h \to 0} \frac{1}{-h} \int\limits_{B(0,1)}\left( H(y+h)-H(y) \right) 
f(y) \d y.
\end{align*}
Let $f_1(s)=f((0,...,0,s))$. We have

\begin{align}\label{wild_calc}
&\int_{B_1}\left( H(y+h)-H(y) \right) f(y) \d y = \int_{B_1}\left( 
H(y+h)-H(y) \right) f_1(y_d) \d y \nonumber \\ = &\int_{B_1}\left( H(y+h)-H(y) 
\right) \int_0^{y_d} f_1'(s) \d s \d y = \int_0^1  \d sf_1'(s) \int_{B_1 
	\cap \H_s}\left( H(y+h)-H(y) \right) \d y,
\end{align}

where $\H_s = \left\lbrace y: y_d > s \right\rbrace$. Denote 
$\tilde{y}=(y_1,...,y_{d-1})$. Then

\begin{align}\label{wild_calc2} 
&\int_0^1  \d sf_1'(s) \int_{B_1 \cap \H_s}\left( H(y+h)-H(y) \right) \d y 
\nonumber \\ = &\int_0^1  \d sf_1'(s) \int_{|\tilde{y}|<1}\d \tilde{y} 
\int_s^{\sqrt{1-|\tilde{y}|^2}} \left[ H(y+h)-H(y) \right]  \d y_d \nonumber \\ 
= &\int_0^1  \d sf_1'(s) \int_{|\tilde{y}|<1}\d \tilde{y} \left[ 
\int_{s+h}^{\sqrt{1-|\tilde{y}|^2}+h} H(y) \d y_d - 
\int_s^{\sqrt{1-|\tilde{y}|^2}} H(y) \d y_d \right] \nonumber \\ = &\int_0^1 
\d sf_1'(s) \int_{|\tilde{y}|<1}\d \tilde{y} \left[ \int_s^{s+h} H(y) \d y_d - 
\int_{\sqrt{1-|\tilde{y}|^2}}^{\sqrt{1-|\tilde{y}|^2}+h} H(y) \d y_d \right] 
\nonumber \\ = &\int_0^1  \d sf_1'(s) \int_{|\tilde{y}|<1} \left[ \left( 
G(\tilde{y},s+h) - G(\tilde{y},s) \right) - \left( 
G(\tilde{y},\sqrt{1-|\tilde{y}|^2}+h) - G(\tilde{y},\sqrt{1-|\tilde{y}|^2})  
\right)  \right] \d \tilde{y} \nonumber \\ =&: I_1(h)-I_2(h).
\end{align}

The Dominated Convergence Theorem implies
\begin{align}\label{wild_calc3}
\lim_{h \to 0^+} &\int\limits_0^1  \d sf_1'(s) \int_{|\tilde{y}|<1} \frac{ 
	G(\tilde{y},\sqrt{1-|\tilde{y}|^2}+h) - 
	G(\tilde{y},\sqrt{1-|\tilde{y}|^2})}{-h} \d \tilde{y} \nonumber \\  
=-&\int_0^1 
\d sf_1'(s) \int_{|\tilde{y}|<1} \lim_{h \to 0^+} \frac{ 
	G(\tilde{y},\sqrt{1-|\tilde{y}|^2}+h) - 
G(\tilde{y},\sqrt{1-|\tilde{y}|^2})}{h} 
\d \tilde{y} \nonumber \\ = -&\int_0^1  \d sf_1'(s) \int_{|\tilde{y}|<1} 
H(\tilde{y},\sqrt{1-|\tilde{y}|^2}) \d \tilde{y} = -\int_{B_1} 
H(\tilde{y},\sqrt{1-|\tilde{y}|^2}) f_1'(y_d) \d y.
\end{align}

Note that the function $H$ under the integral sign is bounded on $B_1$. It 
follows that
\begin{align*}
\lim_{h \to 0^+} \frac{I_2(h)}{h} \leq C \int_{B_1} f_1'(y_d) \d y < \infty.
\end{align*}

By the Fatou lemma

\begin{align}\label{wild_calc4}
\liminf_{h \to 0^+} &\int_0^1  \d sf_1'(s) \int_{|\tilde{y}|<1} \frac{ 
	G(\tilde{y},s+h) - G(\tilde{y},s)}{-h} \d \tilde{y} \\
&\geq \int_0^1  \d sf_1'(s) \int_{|\tilde{y}|<1} -H(\tilde{y},s)\d \tilde{y} 
\nonumber = \int_{B_1} -H(y)f_1'(y_d) \d y. 
\end{align}

We have
\begin{align*}
f_1'(s) = \ln^{-\beta} \left( 1+s^{-1} \right) + \frac{\beta}{s+1} 
\ln^{-\beta-1} \left( 1+s^{-1} \right), \quad s >0.
\end{align*}
Thus,

\begin{align*}
\int_{B_1} &\frac{y_d}{|y|^{d+1}} \ln^{-\beta} \left( 1+\left( y_d^{-1} 
\right)_+ \right)  \d y \geq \int_{S_2} \frac{y_d}{|y|^{d+1}} \ln^{-\beta} 
\left( 1+y_d^{-1} \right)  \d y \\
&\geq \int_{S_2} \frac{y_d}{y_d^{d+1}} 
\ln^{-\beta} \left( 1+y_d^{-1} \right)  \d y \geq C \int_0^a \frac1y 
\ln^{-\beta} \left( 1+y^{-1} \right)  \d y.
\end{align*} 

Hence $\frac{\partial^2}{\partial x_d^2}g_-(0)=\infty$. 

\subsection{Case \texorpdfstring{$\alpha \in (1,2)$}{a e 
		(1,2)}}\label{counterex_12}
Let $d=1$. The compensated potential kernel is of the form $G(x,y) = c_{\alpha} 
|x-y|^{\alpha-1}$. From \cite[Lemma 2.1]{MR521856} we have
\begin{align*}
g'(x) = \int_{-1}^1 G'(y-x)f(y) \d y.
\end{align*} 
We count the second derivative $g(x)$ for $|x|<1$. Observe that
\begin{align*}
I_1(x) := \frac{d}{\d x} \int_{\substack{|y|<1, \\ |y-x|>\frac{1-|x|}{2}}} 
G'(y-x)f(y) \d y = \int_{\substack{|y|<1, \\ |y-x|>\frac{1-|x|}{2}}} 
G''(y-x)f(y) \d y.
\end{align*}
Hence $g''(x) = I_1(x)+I_2(x)$, where
\begin{align*}
I_2(x) := \lim_{h \to 0} \int_{|y-x|<\frac{1-|x|}{2}} = 
\frac{G'(y-x-h)-G'(y-x)}{h}f(y) \d y.
\end{align*}
Put $f(y) = (y_+)^{2-\alpha}$. Then
\begin{align*}
I_2(0) = \lim_{h \to 0} \int_{0}^{1/2} \frac{G'(y-h)-G'(y)}{h} y^{2-\alpha} \d 
y.
\end{align*}
We count the left-sided limit. Let $h > 0$.
\begin{align*}
\int_0^{1/2} \frac{G'(y+h)-G'(y)}{-h}y^{2-\alpha} \d y &= C\int_0^{1/2} 
\frac{y^{\alpha -2} - (y+h)^{\alpha -2}}{h}y^{2-\alpha} \d y  \\&= 
C\int_0^{1/2} \frac{1-\left( 1+ h/y \right)^{\alpha -2}}{h} \d y \\ &= 
C\int_0^{1/(2h)} \left(1 - \left(1 + y^{-1} \right)^{\alpha -2} \right) \d y 
\\&= C\int_{2h}^{\infty} \left(1-(1+s)^{\alpha -2} \right) \,\frac{\d s}{s^2}.
\end{align*}
Thus $g''(0_-)=\infty$.
Now let $d>1$. Then $G(x,y) = |x-y|^{-d+\alpha}$. Denote
\begin{align*}
g(x) = \int_{B_1}G(x,y)f(y) \d y,
\end{align*}
where $f(y) = \left( (y_d)_+ \right)^{2-\alpha}$. \cite[Lemma 2.1]{MR521856} 
implies
\begin{align*}
\frac{\partial g(x)}{\partial x_d} = \int_{B_1}\frac{\partial G(x,y)}{\partial 
	x_d}f(y) \d y.
\end{align*}
We follow closely the argumentation from the case $\alpha=1$, $d>1$. We 
introduce the same notation
\begin{align*}
H(x,y):=\frac{\partial G(x,y)}{\partial x_d} = -(d-\alpha) \frac{\left( x-y 
	\right)_d}{|x-y|^{d+2-\alpha}} = -C \frac{\left( x-y 
	\right)_d}{|x-y|^{d+\beta}},
\end{align*}
$C>0$, $\beta:=2-\alpha \in (0,1)$. Let $h>0$. By repeating \eqref{wild_calc} 
--- \eqref{wild_calc4}
we conclude that it remains to calculate
\begin{align*}
\int_{B_1}-H(y)f_1'(y_d) \d y,
\end{align*}
where $f_1$ is the same as for $\alpha=1$. Here the derivative has simpler 
form. Note that the argumentation \eqref{wild_calc} -- \eqref{wild_calc4} is 
correct even though $f_1$ does not 
belong to $C^1(B_1)$ for $\alpha > 1$. We obtain 
\begin{align*}
\int_{B_1} \frac{y_d}{|y|^{d+\beta}}\left(y_d\right)_+^{1-\alpha} \d y &= 
\int_A \frac{y_d}{|y|^{d+\beta}}y_d^{1-\alpha} \d y \geq \int_{S_2} 
\frac{y_d}{|y|^{d+\beta}}y_d^{1-\alpha} \d y \geq \int_{S_2} 
\frac{y_d}{y_d^{d+\beta}}y_d^{1-\alpha} \d y \\ &\geq C \int_0^a 
\,\frac{\d y}{y}=\infty.
\end{align*}
Hence $\frac{\partial^2}{\partial x_d^2}g_-(0)=\infty$.

\section{Examples}\label{sec:examples}

In the last section we present some examples of operators $\LL$ resp. 
corresponding Dirichlet problems that allow for an application of 
\autoref{thm:main_thm}. In 
\autoref{ex:frac-laplace} we modify the considerations from 
\autoref{sec:counterexamples} in order to match the assumptions of 
\autoref{thm:main_thm}. In \autoref{ex:subordinate-BM} we generalize to 
subordinated Brownian motion. Finally, in \autoref{ex:scaling_prop} we extend 
the above class and discuss the process which is assumed only to have the lower 
scaling property on the characteristic exponent. 

\begin{exmp}[fractional Laplace operator]\label{ex:frac-laplace}
	Let $X_t$ be strictly stable process whose generator is the fractional 
Laplace 
	operator $-(-\Delta)^{\alpha/2}$. Let $D$ be a bounded open set.

	\begin{enumerate}
		\item Let $\alpha \in (0,1)$. The potential kernel is of the form 
$G(y) = c_{d,\alpha}|y|^{\alpha-d}$ and satisfies 
\begin{align}\label{ex_Gint}
		\int_0^{1/2} |G'(t)|t^{d-1}\d t = \infty.
\end{align}
Here $S(r)=|G'(r)|$. According to \autoref{thm:main_thm}, there is 
a $C^2_{\operatorname{loc}}(D)$ solution of \eqref{General_problem3} if the 
following holds:
\begin{align}\label{ex_alpha01_cond}
 \int_0^{1/2} |G'(t)|\omega_{ \nabla f}(t,D)t^{d-1}\,dt = 
\int_0^{1/2} t^{\alpha-2} \omega_{\nabla f}(t,D)\,dt < \infty.
\end{align}
Obviously our function from the counterexample $f(y)=((y_d)_+)^{2-\alpha}$ 
which is $C^{2-\alpha}(D)$ does not satisfy \eqref{ex_alpha01_cond}. On the 
		other hand, it is well known that for any function which is 
		$C^{2-\alpha+\epsilon}$, $\epsilon>0$ (i.e. 
		$\tilde{f}(y)=((y_d)_+)^{2-\alpha+\epsilon}$), the solution of 
		\eqref{General_problem3} is $C^2_{\operatorname{loc}}(D)$. 
Clearly, this function satisfies 
		\eqref{ex_alpha01_cond} as well, so in some sense 
\autoref{thm:main_thm} 
		extends already known results. The sufficient condition is 
also 
		$\omega_{\nabla f}(t,D) \leq C t^{1-\alpha} \ln^{-\beta} \left( 
1+ t^{-1} \right)$, $\beta > 1$. Then

\begin{align*}
\int_0^{1/2} |G'(t)|\omega_{\nabla f}(t,D)t^{d-1}\d t &= \int_0^{1/2} 
t^{\alpha-2} t^{1-\alpha} \ln^{-\beta} \left( 1 + t^{-1} \right) \d t 
\\ 
&\leq C\int_0^{1/2} t^{-1} \ln^{-\beta}\left( t^{-1} \right) \d 
t \\ &= C\int_{\ln 2}^{\infty} \,\frac{\! \d t}{t^{\beta}} < \infty.
\end{align*}
		
Calculations in the cases below are very similar and therefore will be omitted.

\item Let $\alpha=d=1$. The compensated potential kernel is of 
the form $G(y) = \frac1\pi \ln \frac{1}{|y|}$ and \eqref{ex_Gint} holds for 
$S(r)=|G'(r)|$. Note that in this case $|G'(r)| \neq c \frac{G(r)}{r}$. 
By \autoref{thm:main_thm} the solution of \eqref{General_problem3} will be 
in $C^2_{\operatorname{loc}}(D)$ if

\begin{align*}
	\int_0^{1/2} |G'(t)|\omega_{\nabla f}(t,D)t^{d-1}\d t = \int_0^{1/2} 
t^{-1} \omega_{\nabla f}(t,D)\d t < \infty.
\end{align*}

Hence, it suffices that $\omega_{\nabla f}(t,D)\leq C \ln^{-\beta} 
\left( 1+ t^{-1} \right)$, $\beta>1$.
\item Let $\alpha =1, d > 1$. 
The potential kernel has a form $G(y) = c_{d,\alpha} |y|^{1-d}$ and 
\eqref{ex_Gint} holds for $S(r)=|G'(r)|$. Analogous to 
the case 
		$\alpha \in (0,1)$ it suffices that $\omega_{ \nabla f}(t,D)\leq 
C \ln^{-\beta} 
		\left( 1+ t^{-1} \right)$, $\beta > 1$.
\item $\alpha \in (1,2), d=1$. The compensated potential kernel 
is of the form $G(y)=c_{\alpha} |y|^{\alpha-1}$, $S(r)=|G''(r)|$, and we  have 
$\int_0^1 |G'(t)|\d t<\infty$, thus by \autoref{thm:main_thm}, there 
will be a $C^2_{\operatorname{loc}}(D)$ solution if
	
\begin{align}\label{ex_alpha12_cond}
		\int_0^{1/2} |G''(t)| \omega_f(t,D) t^{d-1}\d t = \int_0^{1/2} 
t^{\alpha-3} \omega_f(t,D) 
		\d t < \infty.
		\end{align}
		Clearly the function $f(y)=(y_+)^{2-\alpha}$ from 
\autoref{sec:counterexamples} 
		does not satisfy \eqref{ex_alpha12_cond}. In order to correct it 
we must either 
		take a function from $C^{2-\alpha+\epsilon}(D)$, $\epsilon > 0$ 
(i.e. 
		$\tilde{f}(y)=(y_+)^{2-\alpha+\epsilon}$) or a function whose 
modulus of 
		continuity is of the form $\omega_{\tilde{f}}(t,D) = 
t^{2-\alpha} \ln^{-\beta} 
		\left( 1+t^{-1} \right)$, $\beta>1$.
		\item $\alpha \in (1,2), d\geq 2$. The potential kernel has the 
form $G(y) = c_{d,\alpha}|y|^{\alpha-d}$ and $S(r)=|G''(r)|$. We have 
		\begin{align*}
		\int_0^{1/2} |G'(t)|t^{d-1}\d t < \infty.
		\end{align*}
		By \autoref{thm:main_thm} we have to take a function $\tilde{f}$ 
from 
		$C^{2-\alpha+\epsilon}(D)$ or such that its modulus of 
continuity has the form 
		$\omega_{\tilde{f}}(t,D) = t^{2-\alpha} \ln^{-\beta} \left( 1+ 
t^{-1} \right)$, 
		$\beta>1$.
	\end{enumerate}
\end{exmp}

\begin{exmp}[Subordinate Brownian motion]\label{ex:subordinate-BM}
	Let $(B_t, t\geq 0)$ be a Brownian motion in $\Rd$ and $(S_t, t \geq 0)$ 
--- a 
	subordinator independent from $B_t$, i.e. a L\'{e}vy process in $\R$ 
which 
	stars from $0$ and has non-negative trajectories. Process $(X_t, t\geq 
0)$ 
	defined by $X_t=B_{S_t}$ is called a subordinated Brownian motion.
	Denote by $\phi$ the Laplace exponent of $S_t$:
	\begin{align*}
	\E \exp \{ -\lambda S_t \} =  \exp \{ -t \phi(\lambda) \}.
	\end{align*}
	It is well known that $\phi$ is of the form
	\begin{align*}
	\phi(\lambda) = \gamma t + \int_0^{\infty} \left( 1- e^{-\lambda t} 
\right) 
	\,\mu(\! \d t)
	\end{align*}
	where $\mu$ is the L\'{e}vy measure of $S_t$ satisfying $\int_0^{\infty} 
(1 \wedge t) \mu(\! \d t)<\infty$. The corresponding operator is of the 
form $\LL = -\phi(-\Delta)$ and we have $\psi(\xi) = \phi(|\xi|^2)$. An 
example of subordinated Brownian motion is the 
	process from \autoref{ex:frac-laplace} with $\phi(\lambda) = 
	\lambda^{\alpha/2}$, $\alpha \in (0,2)$. Another example is geometric 
stable 
	process with $\phi(\lambda)=\ln \left(1+\lambda^{\alpha/2} \right)$, 
$\alpha \in 
	(0,2)$. Denote by $G_d(r)$ the potential of $d$-dimensional 
subordinated 
	Brownian motion $X_t$. From  \cite[Theorem 
	5.17]{MR3646773} we have
	\begin{align}\label{subord_green_est}
	G_d(r)  \asymp r^{-d-2} \frac{\phi'(r^{-2})}{\phi^2(r^{-2})}, \quad r 
	\to 0^+,
	\end{align} if $d \geq 3$ and there exist $\beta \in [0,d/2+1)$ and 
$\alpha>0$ such that $\phi^{-2}\phi'$ satisfies weak lower and upper scaling 
condition at infinity with exponents $-\beta$ and $-\alpha$, respectively (see 
\cite{MR3646773}). The same result under slightly stronger assumptions is derived 
in 
\cite[Proposition 3.5]{MR2928720}.
	 \medskip 
	For $d$-dimensional subordinated Brownian motion $X_t$, $d\geq 3$, we 
have
	\begin{align*}
	G_d(r) = \int_0^{\infty} (4 \pi t)^{-d/2} \exp \left( -\frac{r^2}{4t} 
\right) 
	\,u(\! \d t).
	\end{align*}
	It follows that
	\begin{align}\label{G'_subordinator}
	G_d'(r) &= G_d(r) = -\int_0^{\infty} (4 \pi t)^{-d/2} \exp \left( 
	-\frac{r^2}{4t} \right) \frac{2r}{4t} u(\! \d t) \nonumber \\ &= -2r \pi 
\int_0^{\infty} (4 
	\pi t)^{-(d+2)/2} \exp \left( -\frac{r^2}{4t}  \right) u(\! \d t) = -2r 
\pi 
	G_{d+2}(r).
	\end{align}
	That and \eqref{subord_green_est} imply
	\begin{align*}
	\left\lvert G_d'(r) \right\rvert \leq Cr \cdot r^{-(d+1)-2} 
	\frac{\phi'(r^{-2})}{\phi^2(r^{-2})} = C \frac1r r^{-d-2} 
	\frac{\phi'(r^{-2})}{\phi^2(r^{-2})} \leq C \frac{G_d(r)}{r}.
	\end{align*}
	By induction
	\begin{align*}
	\left\lvert G_d^{(k)}(r) \right\rvert \leq C \frac{G(r)}{r^k}, \quad k 
	\in \N.
	\end{align*}
	Thus, the necessary conditions involving $G$ and its derivatives hold 
true for $S(r)=G_d(r)/r^2$. Note that the density of 
	L\'{e}vy measure of $X_t$

\begin{align*}
	\nu(r) = \int_{0}^{\infty} (4 \pi t)^{-d/2} \exp \left( -\frac{r^2}{4t} 
\right) \,\mu(\! \d t)
\end{align*}

belongs to $C^{\infty}$. By \cite[Lemma $7.4$]{BGPR2017} the 
assumptions of \autoref{thm:main_thm} are satisfied with $\nu^*\equiv \nu$ if 
$\phi$ is a complete Bernstein function.

	\medskip
	
	Take geometric stable process with $\phi(\lambda)= \ln \left( 
	1+\lambda^{\alpha/2} \right)$. Then by \eqref{subord_green_est} and 
\eqref{G'_subordinator}
	
\begin{align*}
\int_0^{1/2} \lv G_d'(t) \rv t^{d-1}\d t &\geq  C \int_0^{1/2} t^{-d-3} t^{d-1} 
\frac{\phi'(t^{-2})}{\phi^2(t^{-2})}\d r = C \int_0^{1/2} \frac{1}{t^4} 	
\frac{\frac{1}{1+t^{-\alpha}} \frac{1}{t^{\alpha-2}}}{\ln^2\left( 
1+t^{-\alpha} \right)}\d t \\ &\geq C \int_0^{1/2} \frac{1}{t^2} 
\frac{1}{\ln^2\left(1 + t^{-\alpha}\right)}\d t \geq \int_0^{1/2} \frac{1}{t^2 
\ln^2 t^{-1}}\d t = \infty,
	\end{align*}
	
	hence, for the solution of \eqref{General_problem3} to be in 
$C^2_{\operatorname{loc}}(D)$, it 
	suffices that the modulus of continuity of gradient of function $f$ is 
of the form $\omega_{\nabla f}(t,D)=t \ln^{1-\epsilon} \left( 1+ 
t^{-1}\right)$, $\epsilon \in (0,1)$.
\end{exmp}

\medskip

Before moving to the last example, let us define concentration functions $K$ 
and $h$ by setting
\begin{align*}
K(r) = \frac{1}{r^2} \int_{|x| \leq r} |x|^2 \nu (\! \d x), \quad r>0,\\
h(r)=\int_{\Rd} \(1 \wedge \frac{|x|^2}{r^2}\)\nu(\! \d x), \quad r>0.
\end{align*}
\begin{prop}\label{prop:scaling_prop}
	Let $d \geq 3$. Suppose there exist $c>0$ and $\alpha \geq 3/2$ such that
	\begin{align}\label{scaling_cond}
	h(r) \leq c\lambda^{\alpha} h(\lambda r), \quad \lambda \leq 1, r>0.
	\end{align}
	Then there exists $c>0$ such that $|U'(r)|\leq cU(r)/r$, $|U''(r)|\leq cU(r)/r^2$, $|U'''(r)|\leq c U(r)/r^3$ for $r>0$.
\end{prop}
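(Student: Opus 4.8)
The plan is to work from the heat-kernel representation $U(x)=\int_0^\infty p_t(x)\d t$, which is finite for $x\neq0$ since $d\ge 3$, and to differentiate under the integral sign. First I would reformulate the hypothesis. The condition $h(r)\le c\lambda^\alpha h(\lambda r)$ for all $\lambda\le1$ forces $h(\lambda r)\ge c^{-1}\lambda^{-\alpha}h(r)\to\infty$ as $\lambda\to0$, so $\nu(\Rd)=h(0^+)=\infty$ and $p_t$ is a continuous, strictly positive, radial, non-increasing density; hence so is $U$. On the other hand it is elementary from the definition of $h$ that $h(\lambda r)\le 3\lambda^{-2}h(r)$ for $\lambda\le1$, so $h$ satisfies both a weak lower scaling (with exponent $\alpha\ge\tfrac32$) and a weak upper scaling (with exponent $2$); in particular $h$ is doubling, $h(r/2)\asymp h(r)$. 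Under these two-sided scaling properties, the classical potential theory of isotropic unimodal Lévy processes gives
\[
\nu(r)\asymp\frac{h(r)}{r^d},\qquad U(r)\asymp\frac1{r^dh(r)}\qquad(r>0);
\]
the estimate for $U$ can also be recovered by combining $\int_0^rU(s)s^{d-1}\d s\asymp h(r)^{-1}$ (cf. \cite{MR3225805}) with the monotonicity and doubling of $U$. In particular $U$ is doubling, $U(r/2)\le CU(r)$.

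Next I would invoke the corresponding estimates for the derivatives of $p_t$. Writing $\nabla^kp_t(x)=(2\pi)^{-d}\int_{\Rd}(i\xi)^ke^{ix\cdot\xi-t\psi(\xi)}\d\xi$ and splitting at the scale $|x|=h^{-1}(1/t)$, the lower-scaling property of $h$ (equivalently of $\psi$) yields, for $k\in\{1,2,3\}$,
\[
|\nabla^kp_t(x)|\le c_k\,\frac{p_t(x/2)}{\bigl(|x|\vee h^{-1}(1/t)\bigr)^k}\le c_k\,\frac{p_t(x/2)}{|x|^k}\qquad(x\neq0,\ t>0)\,;
\]
on the large-jump part $|x|\gtrsim h^{-1}(1/t)$ one uses $p_t(x)\asymp t\nu(x)$ together with the regularity of $\nu$ implied by its scaling, while on the diffusive part $|x|\lesssim h^{-1}(1/t)$ the Fourier integral is bounded by $c(h^{-1}(1/t))^{-d-k}\asymp p_t(0)(h^{-1}(1/t))^{-k}$. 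This is the step that uses the full strength of $\alpha\ge\tfrac32$: it is exactly what makes the third-order estimate available. Since $p_t$ is radial and non-increasing, $\sup_{y\in B(x,|x|/4)}|\nabla^kp_t(y)|\le c_k\,2^k|x|^{-k}p_t(|x|/4)$, and $\int_0^\infty p_t(|x|/4)\d t=U(|x|/4)<\infty$.

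Consequently one may differentiate $U=\int_0^\infty p_t\d t$ under the integral sign up to order three on $\Rd\setminus\seto$, so $U\in C^3(\Rd\setminus\seto)$, and for $k\in\{1,2,3\}$
\[
|\nabla^kU(x)|\le\int_0^\infty|\nabla^kp_t(x)|\d t\le c_k\,|x|^{-k}\int_0^\infty p_t(x/2)\d t=c_k\,\frac{U(x/2)}{|x|^k}\le C_k\,\frac{U(x)}{|x|^k},
\]
using the doubling of $U$. Since $U$ is radial, its radial derivatives $U',U'',U'''$ at radius $r$ are controlled by the entries of $\nabla U$, $\nabla^2U$, $\nabla^3U$, whence $|U'(r)|\le cU(r)/r$, $|U''(r)|\le cU(r)/r^2$ and $|U'''(r)|\le cU(r)/r^3$. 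The two places where the real work is hidden are the comparability $U(r)\asymp(r^dh(r))^{-1}$ with the ensuing doubling of $U$, and the third-order heat-kernel gradient estimate of the middle paragraph; both follow from the weak-scaling calculus for unimodal Lévy semigroups, and it is the latter that requires $\alpha\ge\tfrac32$.
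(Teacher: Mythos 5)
Your overall strategy (write $U=\int_0^\infty p_t\,\d t$, differentiate under the integral sign, and use derivative estimates for $p_t$) is the same as the paper's, but the pointwise estimate on which everything rests, $|\nabla^k p_t(x)|\le c_k\,p_t(x/2)\,\bigl(|x|\vee h^{-1}(1/t)\bigr)^{-k}$, is a genuine gap. In the far region $|x|>h^{-1}(1/t)$ the estimate actually available (Grzywny--Szczypkowski, which is what the paper invokes) is $|\partial^\beta_x p_t(x)|\le c\,(h^{-1}(1/t))^{-|\beta|}\varphi_t(x)$ with $\varphi_t(x)=tK(|x|)|x|^{-d}$: the derivative factor is $(h^{-1}(1/t))^{-k}$, which for small $t$ is far larger than $|x|^{-k}$, and $\varphi_t$ is only an upper bound for $p_t$, not comparable to it. Your sketch for upgrading this to $|x|^{-k}p_t(x/2)$ relies on $p_t(x)\asymp t\nu(x)$, on $\nu(r)\asymp h(r)/r^d$, and on ``regularity of $\nu$ implied by its scaling''; none of these follows from the hypothesis, which constrains only the integrated quantity $h$. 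The truncated stable process of \autoref{ex:scaling_prop} satisfies \eqref{scaling_cond} for all $r>0$ yet has $\nu\equiv 0$ outside $B_1$, so both $\nu(r)\asymp h(r)/r^d$ and $p_t(x)\asymp t\nu(x)$ fail for $|x|>1$ (only the upper bound $\nu(r)\lesssim h(r)/r^d$ is general). A symptom that something is off: if your pointwise bound held, the conclusion would follow for every order $k$ with no constraint on $\alpha$, so the hypothesis $\alpha\ge 3/2$ would never be used where you claim it is.

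In the paper the condition on $\alpha$ enters exactly at the step your argument skips. After inserting the $(h^{-1}(1/t))^{-k}\varphi_t$ bound one splits the time integral at $t=1/h(r)$. On $(0,1/h(r))$ the lower scaling \eqref{scaling_cond} gives $h^{-1}(1/t)\ge c\,r\,(t h(r))^{1/\alpha}$, so that piece is bounded by $c\,K(r)r^{-d-k}h(r)^{-k/\alpha}\int_0^{1/h(r)}t^{1-k/\alpha}\,\d t$, which converges precisely when $\alpha>k/2$ (hence the threshold $3/2$ for $k=3$); combined with $K\asymp h$, the universal bound $h(r)\ge\lambda^2h(\lambda r)$ on the piece $t>1/h(r)$, and the lower bound $U(r)\ge c\,(r^dh(r))^{-1}$, this yields $|U^{(k)}(r)|\le c\,r^{-d-k}h(r)^{-1}\le c\,U(r)/r^k$. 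Note that only the \emph{lower} bound for $U$ is needed; your two-sided comparability $U(r)\asymp(r^dh(r))^{-1}$ and the doubling of $U$ are correct under the hypothesis for $d\ge3$, but they are more than is required and do not repair the missing heat-kernel derivative estimate. To fix your proof, replace your pointwise bound by the $(h^{-1}(1/t))^{-k}\varphi_t$ bound and carry out the time-integral splitting.
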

\begin{proof} Observe that for $d \geq 3$ the potential $U$ always exists. By 
\cite[Theorem $3$]{MR3225805} there exists $c>0$ such that
	
	\begin{align*}
	U(x)\geq \frac{c}{|x|^d h(1/r)}, \quad r>0.
	\end{align*}
	
	Our aim is to prove (G). By definition and isotropy of $p_t$ 
	
	\begin{align*}
	U(r)=\int_0^{\infty} p_t(\tilde{r}) \d t,
	\end{align*}
	
	where by $\tilde{r}=(0,...,0,r) \in \Rd$. Since $p_t$ is radially 
decreasing, by the Tonelli theorem
	
	\begin{align*}
	U(r)-U(1) = \int_0^{\infty} \int_1^r \partial_{x_d} p_t(y) \d y \d t = 
\int_1^r \int_0^{\infty} \partial_{x_d} p_t(\tilde{y}) \d t \d y,
	\end{align*}
	
	where $\tilde{y} = (0,...,0,y)\in\Rd$. Hence,
	
	\begin{align*}
	U'(r)=\int_0^{\infty} \partial_{x_d} p_t(\tilde{r}) \d t, \quad r>0.
	\end{align*}
	
	By \cite[Theorem $5.6$ and Corollary $6.8$]{GS2017}
	
	\begin{align*}
	\lv \partial^{\beta}_x p_t(x) \rv \leq c \( h^{-1}(1/t)\)^{-|\beta|} 
\varphi_t(x), \quad t>0, x \in \Rd,
	\end{align*}
	
	where
	\begin{align*}
	\varphi_t(x) = \left\{
	\begin{array}{ll}
	\(h^{-1}(1/t)\)^{-d}, & |x|\leq h^{-1}(1/t), \\
	tK(|x|)|x|^{-d}, & |x|> h^{-1}(1/t).
	\end{array}\right.
	\end{align*}
	Let us estimate $|U'(r)|$. We have
	
	\begin{align*}
	 |U'(r)| \leq \frac{K(|x|)}{|x|^{d}}\int_0^{1/h(|x|)} \frac{t}{ 
h^{-1}(1/t)} \d t + \int_{1/h(|x|)}^{\infty} \frac{\d t}{\(h^{-1}(1/t)\)^{d+1}}.
	\end{align*}
	
	The scaling property of $h$ for $|x| > h^{-1}(1/t)$ yields
	
	\begin{align*}
	h(|x|) \leq c \(\frac{h^{-1}(1/t)}{|x|}\)^{\alpha} h(h^{-1}(1/t)).
	\end{align*}
	
	It follows that
	
	\begin{align*}
	\frac{K(|x|)}{|x|^{d}}\int_0^{1/h(|x|)} \frac{t}{ h^{-1}(1/t)} \d t 
&\leq c \frac{K(|x|)}{|x|^{d+1}}\int_0^{1/h(|x|)} t 
\(\frac{1}{th(|x|)}\)^{1/\alpha} \d t \\ &\leq c \frac{K(|x|)} {|x|^{d+1}} 
(h(|x|))^{-1/\alpha} \int_0^{1/h(|x|)} t^{1-1/\alpha} \d t.
	\end{align*}
	
	For $\alpha > 1/2$ the integral is finite and we get
	
	\begin{align*}
	\frac{K(|x|)}{|x|^{d}}\int_0^{1/h(|x|)} \frac{t}{ h^{-1}(1/t)} \d t 
\leq c \frac{K(|x|)}{|x|^{d+1}h(|x|)^2}.
	\end{align*}
	
	The comparability $K$ and $h$ (\cite[Lemma $2.3$]{GS2017}) implies 
	
	\begin{align*}
	\frac{K(|x|)}{|x|^{d}}\int_0^{1/h(|x|)} \frac{t}{ h^{-1}(1/t)} \d t 
\leq c \frac{1}{|x|^{d+1}h(|x|)} \leq c \frac{U(r)}{r}.
	\end{align*}
	
	Furthermore, we always have $h(r) \geq \lambda^2 h(\lambda r)$ for 
$\lambda \leq 1$ and $r>0$. Thus,
	
	\begin{align*}
	\int_{1/h(|x|)}^{\infty} \frac{\d t}{\(h^{-1}(1/t)\)^{d+1}} &= 
\frac{1}{|x|^{d+1}} \int_{1/h(|x|)}^{\infty} 
\frac{|x|^{d+1}}{\(h^{-1}(1/t)\)^{d+1}}  \d t \\ &\leq \frac{1}{|x|^{d+1}} 
\int_{1/h(|x|)}^{\infty} \(\frac{1}{th(|x|)}\)^{(d+1)/2} \d t. 
	\end{align*}
	
	Since $d>1$, the integral is finite and we get
	
	\begin{align*}
	\int_{1/h(|x|)}^{\infty} \frac{\d t}{\(h^{-1}(1/t)\)^{d+1}} \leq c 
\frac{1}{|x|^{d+1}h(|x|)} \leq c \frac{U(r)}{r}.
	\end{align*}
	
	Hence, for $\alpha>1/2$ we obtain $|U'(r)|\leq cU(r)/r$, $r>0$. By 
similar argument one may conclude that $|U''(r)| \leq cU(r)/r^2$ if $\alpha>1$ 
and $|U'''(r)| \leq cU(r)/r^3$ for $\alpha>3/2$.
\end{proof}

\begin{exmp}\label{ex:scaling_prop}
	Let $d \geq 3$, $\alpha > 3/2$, and $X_t$ be a truncated 
$\alpha$-stable L\'{e}vy process in $\Rd$, i.e. with L\'{e}vy measure $\nu(\! \d 
x)=|x|^{-d-\alpha} \varphi(x)$, where $\varphi$ is a cut-off function, i.e. 
$\varphi \in C^{\infty}(\Rd)$ and $\ind_{B_{1/2}} \leq \varphi \leq 
\ind_{B_1}$. One can easily check that $h(r)\asymp r^{-\alpha} \wedge r^{-2}$. 
\autoref{prop:scaling_prop} yields that the assumptions of 
\autoref{thm:main_thm} imposed on function $G$ are satisfied. Observe that (A) 
and \eqref{growth_condition} is satisfied for $\nu^*\equiv 0$. In that case the 
appropriate $\Lspace$ space is simply $\Ll^1$.
\end{exmp}


\appendix
\section{Potential theory for recurrent unimodal L\'{e}vy 
process}\label{app:appendix}
	In this appendix we establish a formula for the Green function for a 
bounded open set $D$ in case of recurrent unimodal L\'{e}vy process $X_t$. 
Contrary to the transient case, here the potential kernel 
$U(x)=\int_0^{\infty}p_t(x) \d t$ is infinite, so the classical Hunt formula has 
no application. Instead, one can define the $\lambda$-potential kernel 
$U^{\lambda}$ by setting
\begin{align*}
	U^{\lambda}(x)=\int_0^{\infty} e^{-\lambda t} p_t(x) \d t.
\end{align*}

	Similarly, we define the $\lambda$-Green function for an open set $D$
		\begin{align*}
	G_D^{\lambda}(x,y)=\int_0^{\infty} e^{-\lambda t} p_t^D(x-y) \d t.
	\end{align*}

Note that both $U^{\lambda}$ and $G_D^{\lambda}$ exist. An analogue of 
the Hunt formula for $G_D^{\lambda}$ holds, namely, for $x,y \in D$

\begin{align*}
 	G_D^{\lambda}(x,y)=U^{\lambda}(y-x)-\E^x \[ e^{-\lambda \tauD} 
U^{\lambda}(y-X_{\tauD}) \].
\end{align*} 

	\begin{lem}\label{lem:lambdaUlambda_lem}
 		Let $d \geq 1$. For any fixed $x_0 \in \Rd \setminus \seto$ we 
have $\lambda U^{\lambda}(x_0) \to 0$ as $\lambda \to 0$.
	\end{lem}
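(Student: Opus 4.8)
The plan is to split $\lambda U^\lambda(x_0)=\lambda\int_0^1 e^{-\lambda t}p_t(x_0)\,\d t+\lambda\int_1^\infty e^{-\lambda t}p_t(x_0)\,\d t$ and to estimate the two halves using, respectively, local integrability of $t\mapsto p_t(x_0)$ near $t=0$ and the decay $p_t(x_0)\to 0$ as $t\to\infty$. Equivalently, after the substitution $u=\lambda t$ one has $\lambda U^\lambda(x_0)=\int_0^\infty e^{-u}p_{u/\lambda}(x_0)\,\d u$, an exponential average of $p_{u/\lambda}(x_0)$ that concentrates, as $\lambda\downarrow 0$, on values $p_t(x_0)$ with $t$ large.

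First I would note that, by Tonelli's theorem and $\int_{\Rd}p_t=1$, one has $\int_{\Rd}U^\lambda(x)\,\d x=\int_0^\infty e^{-\lambda t}\,\d t=1/\lambda<\infty$; since each $p_t$ is radial and non-increasing, so is $U^\lambda$, and a radial non-increasing function that is infinite at some $x_0\ne 0$ would be infinite on the whole ball $B_{|x_0|}$, contradicting integrability. Hence $U^\lambda(x_0)<\infty$, and in fact $\int_0^T p_t(x_0)\,\d t\le e^{T}U^1(x_0)<\infty$ for every $T>0$. The first half is then immediate: $\lambda\int_0^1 e^{-\lambda t}p_t(x_0)\,\d t\le\lambda\int_0^1 p_t(x_0)\,\d t\le\lambda\,e\,U^1(x_0)\to 0$.

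The heart of the matter is establishing $p_t(x_0)\to 0$ as $t\to\infty$, and here I would work with the spatial average $\int_{B_{|x_0|}}p_t$ rather than with $p_t(x_0)$ itself. Fix a radial $\phi\in C_c^\infty(\Rd)$ with $0\le\phi\le 1$ and $\phi\equiv 1$ on $B_{|x_0|}$; since $\widehat\phi\in\mathcal S\subset L^1(\Rd)$ and $p_t\in L^1(\Rd)$, Fubini's theorem applied to the Fourier inversion of $\phi$ yields $\int_{B_{|x_0|}}p_t\le\int_{\Rd}p_t\phi=(2\pi)^{-d}\int_{\Rd}\widehat\phi(\xi)\,e^{-t\psi(\xi)}\,\d\xi$. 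Now $\psi(\xi)=\int_{\Rd}(1-\cos(\xi\cdot h))\,\nu(\!\d h)>0$ for every $\xi\ne 0$: vanishing of this nonnegative integral would force $\xi\cdot h\in 2\pi\Z$ for $\nu$-a.e.\ $h$, which is impossible because $\nu$ has strictly positive density on a punctured ball $B_\rho\setminus\{0\}$ (its unimodal profile being non-increasing with $\nu(\Rd)=\infty$) while $\{h:\xi\cdot h\in 2\pi\Z\}$ is Lebesgue-null. Dominated convergence, with dominating function $|\widehat\phi|$, then gives $(2\pi)^{-d}\int_{\Rd}\widehat\phi(\xi)e^{-t\psi(\xi)}\,\d\xi\to 0$, hence $\int_{B_{|x_0|}}p_t\to 0$, and by monotonicity of the radial profile $p_t(x_0)\le|B_{|x_0|}|^{-1}\int_{B_{|x_0|}}p_t\to 0$.

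With this in hand, the second half is routine: given $\varepsilon>0$, choose $T\ge 1$ with $p_t(x_0)\le\varepsilon$ for $t\ge T$, and bound $\lambda\int_1^\infty e^{-\lambda t}p_t(x_0)\,\d t\le\lambda\int_1^T p_t(x_0)\,\d t+\varepsilon\,\lambda\int_T^\infty e^{-\lambda t}\,\d t\le\lambda\, e^{T}U^1(x_0)+\varepsilon$, which tends to $\varepsilon$ as $\lambda\downarrow 0$; letting $\varepsilon\downarrow 0$ completes the argument. The main obstacle is exactly the decay statement $p_t(x_0)\to 0$: for a recurrent unimodal process one cannot fall back on the crude estimate $p_t(x_0)\le p_t(0)$, since $p_t(0)$ may be infinite for every $t>0$ (e.g.\ in dimension one with a unimodal Lévy density behaving like $\,|h|^{-1}\log^{-1}(1/|h|)$ near the origin), which is precisely why the proof passes through the spatial average $\int_{B_{|x_0|}}p_t$ together with a smooth test function instead of the sup-norm of $p_t$.
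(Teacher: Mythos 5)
Your argument is correct, but it follows a genuinely different route from the paper's. The paper never isolates the long-time behaviour of $p_t$: it bounds $U^{\lambda}(x_0)$ by its average over the ball $B_{|x_0|}$ (same use of radial monotonicity as yours), computes the Laplace transform in $r^2$ of that ball average via the identity $\int_{\Rd}e^{-s|x|^2}p_u(x)\,\d x=c_d\int_{\Rd}e^{-u\psi(\sqrt{s}\,\xi)}e^{-|\xi|^2/4}\,\d\xi$, and arrives at the single bound $\lambda U^{\lambda}(x_0)\leq c\int_{\Rd}\frac{\lambda}{\lambda+\psi(\xi)}e^{-|\xi|^2/4}\,\d\xi$, which it then kills by splitting the frequency domain and using the scaling inequality $\psi(\xi)\geq c|\xi|^2\sup_{|x|\leq1}\psi(x)$ for $|\xi|\leq1$ together with dominated convergence. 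You instead work in the time domain: you split $\int_0^1+\int_1^\infty$, dispose of the short-time part via $U^1(x_0)<\infty$, prove the stronger intermediate fact $p_t(x_0)\to0$ (again through a ball average, this time of $p_t$ itself, tested against a smooth cut-off and handled by Fourier inversion plus dominated convergence), and finish with a standard Abelian argument. Both proofs ultimately rest on the same two ingredients --- radial monotonicity to replace pointwise values by ball averages, and $\psi(\xi)>0$ for $\xi\neq0$ to drive a dominated-convergence limit in Fourier space --- but your decomposition is more elementary and yields the extra information that $p_t(x_0)\to0$ as $t\to\infty$, whereas the paper's Laplace-transform computation avoids any pointwise statement about $p_t$ and produces an explicit quantitative bound on $\lambda U^{\lambda}(x_0)$ in terms of $\psi$. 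Your closing caveat about $p_t(0)$ possibly being infinite is well taken and correctly explains why the detour through the spatial average is unavoidable in either approach.
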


	\begin{proof}
 	 	In the following part we introduce a mild ambiguity by denoting 
by $1$, depending on the context, either a real number or the vector 
$(0,...,0,1) \in \Rd$. Set $x_0=1$. Let $f_{\lambda}(r)= \int_{|x|<r}\d x 
\int_0^{\infty} e^{-\lambda u}p_u(x) \d u$. We have
	
	\begin{align*}
	L f_{\lambda}(s) &=\int_0^{\infty} e^{-st} f_{\lambda}(t) \d 
t=\int_0^{\infty} e^{-st} \int_{|x|<\sqrt{t}} \d x \int_0^{\infty} e^{-\lambda 
u} p_u(x)\d u \d t \\ &=\int_{\Rd} \d x \int_{t>|x|^2} e^{-st}\d t 
\int_0^{\infty} e^{-\lambda u} p_u(x)\d u =\frac1s \int_0^{\infty} e^{-\lambda 
u} \int_{\Rd} e^{-s|x|^2} p_u(x) \d x \d u.
	\end{align*}
	
	By \cite[Lemma $6$]{MR3225805}
	\begin{align*}
	\int_{\Rd} e^{-s|x|^2} p_u(x) \d x =c_d \int_{\Rd} 
e^{-u\psi(\sqrt{s}x)}e^{-|x|^2/4} \d x.
	\end{align*}
	
	Hence, we have for $\lambda>0$
	\begin{align*}
	s L_{\lambda} f(s)=c_d \int_0^{\infty} e^{-\lambda u} \d u \int_{\Rd} 
e^{-u \psi(\sqrt{s} \xi)}e^{-|\xi|^2/4}  \d \xi = c_d \int_{\Rd} 
\frac{1}{\lambda+\psi(\sqrt{s}\xi)} e^{-|\xi|^2/4} \d \xi.
	\end{align*}
	
	By monotonicity of $f$
	\begin{align*}
f_{\lambda}(r) &= \frac{e}{r} \int_r^{\infty} e^{-u/r} f(r) \d u 
\leq \frac{e}{r} \int_0^{\infty}  e^{-u/r} f_{\lambda}(u) \d u = \frac{e}{r} L 
f_{\lambda}(1/r) \\
&= c' \int_{\Rd} \frac{1}{\lambda+\psi(\sqrt{1/r}\xi)} 
e^{-|\xi|^2/4}\d \xi.
	\end{align*}
	
	Since by \cite[Lemma $1$ and Proposition $1$]{MR3225805}
	
	\begin{align*}
	\sup_{|x|\leq 1} \psi(x) \leq \frac{4}{|\xi|^2}  \sup_{|x|\leq |\xi|} \psi(x) 
\leq c \frac{\psi(\xi)}{|\xi|^2},
	\end{align*}
	we obtain
	
	\begin{align*}
	\lambda G^{\lambda}(1) \leq \lambda \frac{f_{\lambda}(1)}{|B_1|} \leq 
c_d \int_{\Rd} \frac{\lambda}{\lambda+\psi(\xi)}e^{-|\xi|^2/4} \d \xi \leq 
\frac{\lambda}{\psi(1)} \int_{B_1^c} e^{-|\xi|^2/4} \, d\xi + \int_{B_1} 
\frac{\lambda}{\lambda + |\xi|^2}  \d \xi.
	\end{align*}
	
	Hence, $\lambda U^{\lambda}(1) \to 0$ as $\lambda \to 0$. The extension 
to arbitrary $x_0$ is immediate.
	\end{proof} 

\begin{lem}\label{lem:compensation_lemma}
Let $x_0 \in \Rd \setminus \seto$ be an arbitrary fixed point. 
For all $x \in \Rd \setminus \seto$ we have $\int_0^{\infty} \lv p_t(x)-p_t(x_0) 
\rv \, \d t<\infty$.
\end{lem}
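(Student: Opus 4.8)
The plan is to split $\int_0^{\infty}=\int_0^1+\int_1^{\infty}$ and to treat the two ranges by entirely different mechanisms: near $t=0$ the integrand is small because $p_t$ decays off the diagonal, while near $t=\infty$ one has to squeeze decay in $t$ out of the spatial smoothness of $p_t$ together with its isotropy.

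For the small‑time range I would use the standard off‑diagonal bound for the heat kernel of an isotropic unimodal L\'evy process, in the form already used in the proof of \autoref{prop:scaling_prop}: by \cite[Theorem 5.6 and Corollary 6.8]{GS2017} one has $p_t(z)\le c\,\varphi_t(z)$, where $\varphi_t(z)=tK(|z|)|z|^{-d}$ whenever $|z|>h^{-1}(1/t)$. Since $h^{-1}(1/t)\to 0$ as $t\to 0^+$, for each fixed $z\in\Rd\setminus\seto$ this case occurs for all $t<1/h(|z|)$, and as $K(|z|)<\infty$ we get $p_t(z)\le c_z\,t$ for small $t$; together with the continuity of $t\mapsto p_t(z)$ on $(0,\infty)$ this gives $\int_0^1 p_t(z)\,\d t<\infty$ for $z\in\{x,x_0\}$, hence $\int_0^1|p_t(x)-p_t(x_0)|\,\d t\le\int_0^1 p_t(x)\,\d t+\int_0^1 p_t(x_0)\,\d t<\infty$.

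For the large‑time range I would Taylor‑expand $p_t$ about $x_0$. Writing $R=\max(|x|,|x_0|)$, the mean value theorem along the segment $[x_0,x]$ gives $|p_t(x)-p_t(x_0)|\le|x-x_0|\sup_{z\in[x_0,x]}|\nabla p_t(z)|$. Now $p_t$ is radial, so $\nabla p_t(0)=0$, and by \cite[Theorem 5.6 and Corollary 6.8]{GS2017} $p_t$ is twice continuously differentiable with $|\partial_i\partial_j p_t(w)|\le c\,(h^{-1}(1/t))^{-2}\varphi_t(w)\le c\,(h^{-1}(1/t))^{-d-2}$ for all $w\in\Rd$; hence $|\nabla p_t(z)|=|\nabla p_t(z)-\nabla p_t(0)|\le |z|\,c\,(h^{-1}(1/t))^{-d-2}$ and therefore
\begin{align*}
|p_t(x)-p_t(x_0)|\le c\,R\,|x-x_0|\,(h^{-1}(1/t))^{-d-2},\qquad t>0.
\end{align*}
To see that this is integrable over $(1,\infty)$ I would use that $\nu(\Rd)=\infty$ forces $c_0:=\int_{\{|y|\le 1\}}|y|^2\,\nu(\!\d y)\in(0,\infty)$, so $h(r)\ge c_0 r^{-2}$ for $r\ge1$; this gives $h^{-1}(1/t)\ge\sqrt{c_0 t}$ as soon as $t\ge T_0:=\max(1,1/h(1))$, and since $t\mapsto(h^{-1}(1/t))^{-d-2}$ is continuous, hence bounded, on $[1,T_0]$,
\begin{align*}
\int_1^{\infty}(h^{-1}(1/t))^{-d-2}\,\d t\le\int_1^{T_0}(h^{-1}(1/t))^{-d-2}\,\d t+\int_{T_0}^{\infty}(c_0 t)^{-(d+2)/2}\,\d t<\infty,
\end{align*}
because $(d+2)/2>1$ for every $d\ge1$. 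Adding the two ranges proves the lemma.

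The step I expect to be the crux is the large‑time estimate. A naive bound using only the first‑order derivative, $|\nabla p_t(z)|\le c(h^{-1}(1/t))^{-d-1}$, is \emph{not} integrable in $t$ when $d=1$ and the measure spreads as slowly as possible (e.g.\ $h(r)\asymp r^{-2}$ at infinity, exactly the borderline recurrent situation this appendix is aimed at); one really has to exploit $\nabla p_t(0)=0$ together with the second‑order bound in order to gain the extra factor $|z|\,(h^{-1}(1/t))^{-1}$, which is what pushes the exponent $(d+2)/2$ strictly past $1$. The remaining work is routine bookkeeping, and in this approach \autoref{lem:lambdaUlambda_lem} is not needed; the only genuine input beyond the paper's standing assumptions is the quoted heat‑kernel derivative estimates, which in the generality of this appendix are most conveniently taken from \cite{GS2017}.
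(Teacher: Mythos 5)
Your small-time argument is overkill but not the problem: for any $x\neq 0$ one simply has
\begin{align*}
\int_0^1 p_t(x)\,\d t \le e\int_0^1 e^{-t}p_t(x)\,\d t \le e\,U^1(x)<\infty,
\end{align*}
so $\int_0^1|p_t(x)-p_t(x_0)|\,\d t<\infty$ without any heat-kernel estimate.

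The gap is in the large-time range, and it is genuine. The derivative bounds you import from \cite[Theorem 5.6 and Corollary 6.8]{GS2017} — namely $p_t\in C^2$ with $|\partial^\beta_x p_t(w)|\le c\,(h^{-1}(1/t))^{-|\beta|}\varphi_t(w)$ and, in particular, the two-sided on-diagonal comparison $p_t(0)\asymp (h^{-1}(1/t))^{-d}$ lurking behind your display — hold only under additional scaling hypotheses on $h$ (or $\psi$). The paper itself signals this: in \autoref{prop:scaling_prop} precisely these bounds are invoked, but the proposition explicitly assumes the lower scaling condition $h(r)\le c\lambda^{\alpha}h(\lambda r)$ with $\alpha\ge 3/2$. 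Lemma \ref{lem:compensation_lemma} is stated with no such assumption; the standing hypotheses of the paper (unimodality of $\nu$, $\nu\le\nu^*$, \eqref{growth_condition}) do not give any scaling, and the appendix is written for a general recurrent unimodal process. So you cannot apply \cite{GS2017} here, and your estimate $|p_t(x)-p_t(x_0)|\le cR|x-x_0|(h^{-1}(1/t))^{-d-2}$ is unsupported in the required generality. The paper's own proof avoids this entirely: it tests $W^{\lambda}_1$ against a smooth bump $f$, exploits only the radial monotonicity of $p_t$ to show the integrand $p_t\ast f(0)-p_t(1)\|f\|_1\ge 0$ and to bound it above by $p_t\ast f(0)-p_t\ast f(1+4\epsilon)$, passes to Fourier variables to get $\int(1-\cos((1+4\epsilon)\xi))|\hat f(\xi)|/\psi(\xi)\,\d\xi<\infty$ (using rapid decay of $\hat f$, no scaling needed), then lets $\lambda\downarrow 0$ by monotone convergence and finishes by radial monotonicity of $W_1$. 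If you want to salvage your approach, you must either add a scaling hypothesis to the lemma (which would weaken the appendix) or replace the \cite{GS2017} input with a derivative bound valid for every isotropic unimodal process with $\nu(\Rd)=\infty$; as far as I know, no such uniform second-derivative bound is available.
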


\begin{proof}
 Let $f \in C_c^{\infty}(\Rd)$ be such that $\ind_{B_{\epsilon}} \leq f 
\leq \ind_{B_{4\epsilon}}$, where $0<4\epsilon<1$. Denote

\begin{align*}
	W_{x_0}^{\lambda}(x)&=\int_0^{\infty} e^{-\lambda t} 
\(p_t(x)-p_t(x_0)\) \d t, \quad x \neq 0,\\ 
W_{x_0}(x) &= \int_0^{\infty}\(p_t(x)-p_t(x_0)\) 
\d t, \quad x \neq 0.
\end{align*} 

Let $x_0=1$. Observe that
\begin{align*}
W_1^{\lambda} \ast f(0)=\int_0^{\infty} e^{-\lambda t} \( p_t \ast f(0) - 
p_t(1)\norm{f}_1 \) \d t.
\end{align*}

Note that the integrand has a positive sign. Indeed,
\begin{align*}
p_t \ast f(0)-p_t(1)\norm{f}_1 = \int_{B_{4\epsilon}} \(p_t(y)f(y)-p_t(1)f(y)\) 
\d y>0,
\end{align*}

since $4\epsilon<1$. Furthermore,
\begin{align*}
p_t(1)\norm{f}_1 = \int_{B_{4\epsilon}} p_t(1)f(y) \d y \geq 
\int_{B_{4\epsilon}} p_t(1+4\epsilon-y)f(y) \d y = p_t \ast f(1+4\epsilon).
\end{align*}

Hence, by the Fourier inversion theorem
\begin{align*}	
\int_0^{\infty} e^{-\lambda t} \( p_t \ast f(0) - 
p_t(1)\norm{f}_1 \) \d t &\leq \int_0^{\infty} e^{-\lambda t} \int_{\Rd} (1-\cos 
\((1+4\epsilon)\xi \))\widehat{p_t}(\xi)\lv \widehat{f}(\xi) \rv \d \xi \d t  \\ 
&\leq \int_{\Rd} (1-\cos \((1+4\epsilon)\xi \)) \frac{\lv \widehat{f}(\xi) 
\rv}{\psi(\xi)} \d \xi.
\end{align*}

By the monotone convergence theorem and the fact that $ \lv \widehat{f}(\xi) 
\rv$ decays faster than any polynomial
\begin{align*}
 W_1 \ast f(0) = \lim_{\lambda \to 0} W_1^{\lambda} \ast f(0) \leq 
\int_{\Rd} (1-\cos \((1+4\epsilon)\xi \)) \frac{\lv \widehat{f}(\xi) 
\rv}{\psi(\xi)} \d \xi < \infty.
\end{align*}

Hence,
\begin{align}\label{W-L1loc}
\int_{B_{\epsilon}}W_1(x) \d x \leq W_1 \ast f(0) < \infty.
\end{align}

Since $W_1$ is radially decreasing and positive for $|x|<1$, 
\eqref{W-L1loc} implies that it may be infinite only for $x=0$. It follows that 
$W_1$ is well defined for $0<|x|\leq1$. Similarly $0 \leq W_{x_0}<\infty$ for 
$0<|x|\leq |x_0|$.
	
\medskip

It remains to notice that for $|x|>|x_0|$ we have $0 \leq \lv  W_{x_0}(x) 
\rv=-W_{x_0}(x) = W_x(x_0)<\infty$ by the first part of the proof.
\end{proof}

\autoref{lem:compensation_lemma} allows us to introduce, following 
\cite{MR0126885}, \cite{MR0099725}, \cite{MR2256481}, a compensated potential 
kernel by setting for $x \in \Rd \setminus \seto$

\begin{align}\label{compensated_kernel}
W_{x_0}(x):=\int_0^{\infty} \( p_t(x)-p_t(x_0) \)  \d t,
\end{align}

where $x_0 \in \Rd \setminus \seto$ is an arbitrary but fixed point. From the 
proof of \autoref{lem:compensation_lemma} we immediately obtain the following 
corollary.

\begin{cor}
	$W$ is locally integrable in $\Rd$.
\end{cor}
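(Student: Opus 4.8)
The plan is to read the result off from the two facts established in the proof of \autoref{lem:compensation_lemma}: the near-origin bound \eqref{W-L1loc}, namely $\int_{B_\epsilon} W_1(x)\d x \le W_1 \ast f(0) < \infty$ for some fixed $\epsilon \in (0,1/4)$, and the reflection identity $W_{x_0}(x) = -W_x(x_0)$ valid for $|x| > |x_0|$, together with the finiteness of $W_{x_0}(x)$ for every $x \in \Rd \setminus \seto$. I would first note that $W_{x_0}$ is radial, since $p_t$ is; so for any two base points $x_0, y_0 \in \Rd \setminus \seto$ the defining integrals in \eqref{compensated_kernel} are absolutely convergent on $\Rd \setminus \seto$ and
\begin{align*}
W_{x_0}(x) - W_{y_0}(x) = \int_0^\infty \big( p_t(y_0) - p_t(x_0) \big)\d t = W_{x_0}(y_0),
\end{align*}
a finite constant by \autoref{lem:compensation_lemma}. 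Since constants belong to $\Ll^1$, it therefore suffices to show $W_1 \in \Ll^1$, where $W_1$ corresponds to $x_0 = (0,\dots,0,1)$; recall that $W_1$ is positive and radially non-increasing on $B_1 \setminus \seto$.

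Next I would verify $\int_{B_R} |W_1| < \infty$ for an arbitrary $R > 1$, splitting $B_R = B_\epsilon \cup (B_R \setminus B_\epsilon)$ with the $\epsilon$ above. On $B_\epsilon$ the function $W_1$ is nonnegative, so $\int_{B_\epsilon} |W_1| = \int_{B_\epsilon} W_1 < \infty$ is precisely \eqref{W-L1loc}. On $B_R \setminus B_\epsilon$ I would show $W_1$ is bounded: for $\epsilon \le |x| \le 1$ monotonicity of the radial profile gives $0 \le W_1(x) \le W_1(\epsilon)$; for $1 \le |x| \le R$ the identity $W_1(x) = -W_x(1)$ and the bound $p_t(R) \le p_t(|x|) \le p_t(1)$ give
\begin{align*}
0 \le W_x(1) = \int_0^\infty \big( p_t(1) - p_t(|x|) \big)\d t \le \int_0^\infty \big( p_t(1) - p_t(R) \big)\d t < \infty,
\end{align*}
the last integral being finite again by \autoref{lem:compensation_lemma}. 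Hence $|W_1|$ is bounded on the finite-measure set $B_R \setminus B_\epsilon$, so $\int_{B_R} |W_1| < \infty$. Since $R$ was arbitrary, $W_1 \in \Ll^1$, and therefore $W = W_{x_0} = W_1 + \mathrm{const} \in \Ll^1$.

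The only step that is not pure bookkeeping is the control of $W_1$ on $\{|x| > 1\}$, where it changes sign; there the reflection identity from the last line of the proof of \autoref{lem:compensation_lemma} reduces the matter to the radial monotonicity of $p_t$. Everything else amounts to covering $\Rd$ by the ball $B_\epsilon$, where integrability at the singularity is exactly \eqref{W-L1loc}, and a complementary region on which $W_1$ is locally bounded — which is why the corollary is immediate.
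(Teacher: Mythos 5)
Your proof is correct and matches what the paper has in mind when it says the corollary is ``immediately'' obtained from the proof of \autoref{lem:compensation_lemma}: you use \eqref{W-L1loc} to control the singularity at the origin, and the pointwise finiteness together with radial monotonicity of $p_t$ (equivalently, the reflection identity $W_1(x)=-W_x(1)$ for $|x|>1$) to get local boundedness away from the origin. The reduction to $W_1$ via the constant shift $W_{x_0}-W_{y_0}=W_{x_0}(y_0)$ is a clean bookkeeping step that is not spelled out in the paper but is entirely consistent with it.
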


\begin{thm}\label{thm:recurrent_sweeping_formula}
	Let $x_0 \in D^c$, $d \leq 2$ and $D$ be bounded. Then for $x,y \in D$
	\begin{align}
		G_D(x,y)=W_{x_0}(y-x)-\E^x W_{x_0}(y-X_{\tauD}).
	\end{align}
\end{thm}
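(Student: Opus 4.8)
The plan is to derive the formula from the resolvent version of the Hunt formula recalled just above,
\[
G_D^{\lambda}(x,y) = U^{\lambda}(y-x) - \E^x\big[e^{-\lambda\tauD}U^{\lambda}(y-X_{\tauD})\big],
\]
by inserting the compensation point $x_0$ and letting $\lambda\to 0^+$. Writing $U^{\lambda}(z) = W^{\lambda}_{x_0}(z) + U^{\lambda}(x_0)$ with $W^{\lambda}_{x_0}(z) = \int_0^{\infty}e^{-\lambda t}\big(p_t(z)-p_t(x_0)\big)\d t$, a one-line rearrangement of the above gives
\[
\E^x\big[e^{-\lambda\tauD}W^{\lambda}_{x_0}(y-X_{\tauD})\big] = W^{\lambda}_{x_0}(y-x) - G_D^{\lambda}(x,y) + U^{\lambda}(x_0)\big(1-\E^x e^{-\lambda\tauD}\big).
\]
Each term on the right is straightforward as $\lambda\to0^+$: $G_D^{\lambda}(x,y)\uparrow G_D(x,y)$ by monotone convergence (here $x\neq y$, so $G_D(x,y)<\infty$); $W^{\lambda}_{x_0}(y-x)\to W_{x_0}(y-x)$ by dominated convergence, the dominating function $|p_t(y-x)-p_t(x_0)|$ being $\d t$-integrable by \autoref{lem:compensation_lemma}; and $U^{\lambda}(x_0)\big(1-\E^x e^{-\lambda\tauD}\big)\le \big(\lambda U^{\lambda}(x_0)\big)\E^x\tauD\to 0$ using $1-e^{-\lambda\tauD}\le\lambda\tauD$, the bound $\sup_z\E^z\tauD<\infty$ for bounded $D$, and \autoref{lem:lambdaUlambda_lem}. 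Hence the right-hand side converges to the finite number $L:=W_{x_0}(y-x)-G_D(x,y)$, and the whole matter reduces to identifying $\lim_{\lambda\to0^+}\E^x[e^{-\lambda\tauD}W^{\lambda}_{x_0}(y-X_{\tauD})]$ with $\E^x[W_{x_0}(y-X_{\tauD})]$.

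This last step is the crux, and the idea is to perform it with monotone convergence alone, avoiding any uniform-in-$\lambda$ integrability bound. Split the expectation according to $A_+=\{|y-X_{\tauD}|\le|x_0|\}$ and $A_-=\{|y-X_{\tauD}|>|x_0|\}$. Since $p_t$ is radially non-increasing, the integrand $p_t(z)-p_t(x_0)$ has a fixed sign in $t$: it is $\ge 0$ when $|z|\le|x_0|$ and $\le 0$ when $|z|>|x_0|$. Combined with $e^{-\lambda t}\uparrow 1$ and $e^{-\lambda\tauD}\uparrow 1$ as $\lambda\to0^+$, this shows that on $A_+$ the integrand $e^{-\lambda\tauD}W^{\lambda}_{x_0}(y-X_{\tauD})$ is non-negative and increases to $W_{x_0}(y-X_{\tauD})$, while on $A_-$ the quantity $e^{-\lambda\tauD}\big(-W^{\lambda}_{x_0}(y-X_{\tauD})\big)$ is non-negative and increases to $|W_{x_0}(y-X_{\tauD})|$. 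Two applications of monotone convergence therefore yield $a_{\lambda}:=\E^x[e^{-\lambda\tauD}W^{\lambda}_{x_0}(y-X_{\tauD});A_+]\uparrow a_0:=\E^x[W_{x_0}(y-X_{\tauD});A_+]$ and $b_{\lambda}:=\E^x[e^{-\lambda\tauD}(-W^{\lambda}_{x_0})(y-X_{\tauD});A_-]\uparrow b_0:=\E^x[|W_{x_0}(y-X_{\tauD})|;A_-]$, both in $[0,\infty]$.

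The decisive observation is that $a_0<\infty$ automatically. Indeed, $X_{\tauD}\in D^c$ forces $|y-X_{\tauD}|\ge\dist(y,D^c)=\delta_D(y)>0$, so on $A_+$ the vector $y-X_{\tauD}$ lies in the compact annulus $\overline{B}_{|x_0|}\setminus B_{\delta_D(y)}$ (and $A_+$ is $\P^x$-null if $\delta_D(y)>|x_0|$); on this annulus the radially non-increasing, locally integrable function $W_{x_0}$ is bounded. Consequently $a_{\lambda}\le a_0<\infty$, so $a_{\lambda}\to a_0$ is finite, whereas $b_{\lambda}\to b_0$. Since $\E^x[e^{-\lambda\tauD}W^{\lambda}_{x_0}(y-X_{\tauD})]=a_{\lambda}-b_{\lambda}$ and this converges to the finite limit $L$, we get $b_0=a_0-L<\infty$; thus $W_{x_0}(y-X_{\tauD})$ is $\P^x$-integrable, $\E^x[W_{x_0}(y-X_{\tauD})]=a_0-b_0=L=W_{x_0}(y-x)-G_D(x,y)$, and rearranging gives $G_D(x,y)=W_{x_0}(y-x)-\E^x W_{x_0}(y-X_{\tauD})$. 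The main obstacle is precisely this passage to the limit inside the expectation; the point that makes it work — and the one I would emphasize — is that restricting to $|y-X_{\tauD}|\le|x_0|$ confines the argument of $W_{x_0}$ to a fixed compact annulus away from the origin, which turns the problematic term into one controlled by monotone convergence rather than by a delicate tail estimate pitting the growth of $W_{x_0}$ against the decay of the harmonic measure.
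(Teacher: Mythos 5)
Your proposal is correct and follows essentially the same route as the paper's proof: the resolvent Hunt formula, insertion of $U^{\lambda}(x_0)$, the vanishing of $U^{\lambda}(x_0)\E^x[1-e^{-\lambda\tauD}]$ via \autoref{lem:lambdaUlambda_lem}, and the sign-split of the boundary term on $\{|y-X_{\tauD}|\le|x_0|\}$ versus its complement, handled by monotone convergence with finiteness on the first set forced by $|y-X_{\tauD}|\ge\delta_D(y)$ and on the second by the finiteness of all other terms. No substantive difference.
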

	
\begin{proof}
Let $x,y \in D$. Fix $x_0 \in D^c$ and observe that
\begin{align}\label{compensation_proof}
	G_D^{\lambda}(x,y)&=U^{\lambda}(y-x)-\E^x \left[ e^{-\lambda \tauD} 
U^{\lambda}(y-X_{\tauD}) \right] \nonumber \\ 
&=U^{\lambda}(x-y)-U^{\lambda}(x_0)-\E^x \left[ e^{-\lambda \tauD} \left( 
U^{\lambda}(y-X_{\tauD})-U^{\lambda}(x_0)\right) \right] \nonumber \\ &+ 
U^{\lambda}(x_0) \E^x \left[ 1-e^{-\lambda \tauD} \right].
\end{align}
We want to pass with $\lambda$ to $0$. The limit of left-hand side is well 
defined and is equal to $G_D(x,y)$. From \autoref{lem:lambdaUlambda_lem} we get

\begin{align*}
U^{\lambda}(x_0) \E^x \left[ 1-e^{-\lambda \tauD} \right] \leq \lambda 
U^{\lambda}(x_0) \sup_{x \in \Rd} \E^x \tauD \xrightarrow{\lambda \to 0} 0.
\end{align*}

Moreover, from \autoref{lem:compensation_lemma} we obtain that
\begin{align}
\lim_{\lambda \to 0} \(U^{\lambda}(y-x)- U^{\lambda}(x_0) \) = 
W_{x_0}(y-x).
\end{align}

It remains to show the convergence of the middle term of 
\eqref{compensation_proof}. Since $U^{\lambda}$ is radially decreasing, 
$U^{\lambda}(y-X_{\tauD})-U^{\lambda}(x_0)$ is positive on the set $\{y \in 
\Rd\! : \, |y-X_{\tauD}|\leq|x_0|\}$ and non-positive on its complement. By  
\autoref{lem:compensation_lemma}  and the Monotone Convergence Theorem

\begin{align*}
&\lim_{\lambda \to 0} \E^x \left[ e^{-\lambda \tauD} \left( 
U^{\lambda}(y-X_{\tauD})-U^{\lambda}(x_0)\right);|y-X_{\tauD}| < |x_0|\right] \\ 
= &\E^x \left[ W_{x_0}(y-X_{\tauD});|y-X_{\tauD}| < |x_0|\right] \leq 
W_{x_0}(\delta_D(y)) < \infty\,.
\end{align*}

Observe that the left-hand side of \eqref{compensation_proof} converges to $G_D$ 
so it is finite. The remaining integral on the right-hand side converges as well 
by the monotone convergence theorem, but since all the other terms are finite, 
it follows that the integral is also finite and we obtain 
\begin{align*}
	\lim_{\lambda \to 0}  \E^x \left[ e^{-\lambda \tauD} \left( 
U^{\lambda}(y-X_{\tauD})-U^{\lambda}(x_0)\right) \right] = \E^x 
W_{x_0}(y-X_{\tauD})\,,
\end{align*}

which ends the proof.
\end{proof}

\bibliographystyle{abbrv}
\bibliography{bib-reg}
\end{document}